\def \ba {\begin {eqnarray*} }
\def \ea {\end {eqnarray*} }
\def \beq {\begin {eqnarray}}
\def \eeq {\end {eqnarray}}
\def \p {\partial}
\newcommand{\tr}{\mathrm{tr}}
\newcommand{\mR}{\mathbb{R}}                    % Formatting for R
\newcommand{\mC}{\mathbb{C}}                    % Formatting for C
\newcommand{\R}{\mathbb{R}}                    % Formatting for R
\newcommand{\mN}{\mathbb{N}}                    % Formatting for N
\newcommand{\abs}[1]{\lvert #1 \rvert}          % Formatting for the absolute value
\newcommand{\norm}[1]{\lVert #1 \rVert}         % Formatting for the norm
\newcommand{\br}[1]{\langle #1 \rangle}         % Formatting for the inner product
\newcommand{\eps}{\varepsilon}
\newcommand{\mF}{\mathscr{F}}
\newcommand{\id}{\mathrm{Id}}
\newcommand{\re}{\mathrm{Re}}
\newcommand{\im}{\mathrm{Im}}
\newcommand{\supp}{\mathrm{supp}}
\newcommand{\op}[1]{\mathop{\rm Op}\nolimits_{#1}}
\newcommand{\WF}{\mathop{\rm WF}\nolimits_{\rm scl}}
\theoremstyle{plain}
\newtheorem{thm}{Theorem}[section]
\newtheorem{prop}[thm]{Proposition}
\newtheorem{lemma}[thm]{Lemma}
\newtheorem{question}{Question}[section]
\newtheorem*{conjecture}{Conjecture}
\theoremstyle{definition}
\newtheorem*{definition}{Definition}
\newtheorem{example}{Example}[section]
\newtheorem*{remark}{Remark}
\numberwithin{equation}{section}
\title[Calder\'on problem in transversally anisotropic geometries]{The Calder\'on problem in transversally anisotropic geometries}
\author{David Dos Santos Ferreira}
\address{Institut \'Elie Cartan, UMR 7502, Universit\'e de Lorraine, CNRS, Campus des Aiguillettes B.P. 70239, F-54506 Vandoeuvre-l\`es-Nancy, France}
\email{ddsf@math.cnrs.fr}
\author{Yaroslav Kurylev}
\address{Department of Mathematics, University College London}
\email{y.kurylev@ucl.ac.uk}
\author{Matti Lassas}
\address{Department of Mathematics and Statistics, University of Helsinki}
\email{matti.lassas@helsinki.fi}
\author{Mikko Salo}
\address{Department of Mathematics and Statistics, University of Jyv\"askyl\"a}
\email{mikko.j.salo@jyu.fi}
\begin{document}

\begin{abstract}
We consider the anisotropic Calder\'on problem of recovering a conductivity matrix or a Riemannian metric from electrical boundary measurements in three and higher dimensions. In the earlier work \cite{DKSaU}, it was shown that a metric in a fixed conformal class is uniquely determined by boundary measurements under two conditions: (1) the metric is conformally transversally anisotropic (CTA), and (2) the transversal manifold is simple. In this paper we will consider geometries satisfying (1) but not (2). The first main result states that the boundary measurements uniquely determine a mixed Fourier transform / attenuated geodesic ray transform (or integral against a more general semiclassical limit measure) of an unknown coefficient. In particular, one obtains uniqueness results whenever the geodesic ray transform on the transversal manifold is injective. The second result shows that the boundary measurements in an infinite cylinder uniquely determine the transversal metric. The first result is proved by using complex geometrical optics solutions involving Gaussian beam quasimodes, and the second result follows from a connection between the Calder\'on problem and Gel'fand's inverse problem for the wave equation and the boundary control method.
\end{abstract}

\maketitle
\setcounter{tocdepth}{1} 
\tableofcontents

\section{Introduction} \label{sec_intro}

The anisotropic Calder\'on problem consists in determining the electrical conductivity matrix of a medium, up to a change of coordinates, from current and voltage measurements made at the boundary. More generally the problem may be posed on a smooth Riemannian manifold with boundary. In this case the question is to determine the geometric structure of the manifold from the Cauchy data of harmonic functions. The purpose of this paper is to study the anisotropic Calder\'on problem in transversally anisotropic geometries, where the manifold admits a distinguished Euclidean direction, and to prove uniqueness results for inverse problems in this setting.

Let $(M,g)$ be a compact oriented Riemannian manifold with smooth boundary $\partial M$. Harmonic functions in $M$ are solutions of the Laplace-Beltrami equation 
$$
\Delta_g u = 0 \quad \text{in } M.
$$
Here, the Laplace-Beltrami operator is given in local coordinates by 
$$
\Delta_g u = \abs{g}^{-1/2} \frac{\partial}{\partial x_j} \left( \abs{g}^{1/2} g^{jk} \frac{\partial u}{\partial x_k} \right)
$$
where $(g_{jk})$ is the metric in local coordinates, $(g^{jk}) = (g_{jk})^{-1}$, and $\abs{g} = \det(g_{jk})$. Here and below we are using the Einstein summation convention.

The boundary data of harmonic functions on $M$ is given by the Cauchy data set 
$$
C_g = \{(u|_{\partial M}, \partial_{\nu} u|_{\partial M}) \,;\, \Delta_g u = 0 \text{ in } M, \ u \in H^1(M) \}.
$$
The normal derivative $\partial_{\nu} u|_{\partial M} = \langle du, \nu \rangle|_{\partial M}$, where $\nu$ is the $1$-form corresponding to the unit outer normal of $\partial M$, is interpreted in the weak sense as an element of $H^{-1/2}(\partial M)$. It is clear that if $\psi: M \to M$ is a diffeomorphism satisfying $\psi|_{\partial M} = \id$, then $C_{\psi^* g} = C_g$. On manifolds of dimension $\geq 3$, the anisotropic Calder\'on problem \cite{LeU} amounts to proving that $C_g$ uniquely determines $g$ up to isometry.

\begin{conjecture}
Let $(M,g_1)$ and $(M,g_2)$ be two compact Riemannian manifolds with smooth boundary, and let $\dim(M) \geq 3$. If $C_{g_1} = C_{g_2}$, then 
\begin{align*}
g_2 = \psi^* g_1
\end{align*}
where $\psi: M \to M$ is a diffeomorphism with $\psi|_{\partial M} = \id$.
\end{conjecture}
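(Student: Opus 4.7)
The plan is to follow the by-now classical architecture for anisotropic inverse boundary value problems: first pin down $g_1$ and $g_2$ at the boundary by a symbol calculation, then reduce to a compactly supported perturbation on a fixed ambient manifold, and finally probe the interior by plugging a rich family of harmonic functions into an integral identity derived from $C_{g_1}=C_{g_2}$. Carrying out the last step in full generality is the principal obstruction, and it is the reason the conjecture remains open in dimensions $\geq 3$.

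First I would establish boundary determination: a standard pseudodifferential argument shows that the Cauchy data $C_g$ recovers the full Taylor expansion of the metric at $\partial M$ modulo the action of boundary-fixing diffeomorphisms. Composing $g_2$ with such a diffeomorphism, one may assume $g_1=g_2$ to infinite order on $\partial M$. Gluing each metric to a common fixed exterior across $\partial M$ then yields two manifolds that differ only on a compact subset of the interior, and the hypothesis $C_{g_1}=C_{g_2}$ translates into the familiar integral identity
\[
\int_M \bra du_1, du_2\cet_{g_1}\, dV_{g_1} = \int_M \bra du_1, du_2\cet_{g_2}\, dV_{g_2}
\]
valid for all pairs $(u_1,u_2)$ satisfying $\Delta_{g_i} u_i = 0$ with matching Dirichlet data on $\partial M$.

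The remaining step is to construct two-parameter families of such harmonic functions whose products span a class large enough that the identity above forces $g_1=g_2$ pointwise up to gauge. In the conformally Euclidean setting one uses complex geometrical optics with linear complex phases and reduces the question to injectivity of the Fourier transform; in the conformally transversally anisotropic setting one replaces linear phases by limiting Carleman weights in the Euclidean factor together with Gaussian beam quasimodes along geodesics in the transversal manifold, and the identity collapses to the vanishing of a (possibly attenuated or semiclassically weighted) geodesic ray transform of the unknown. Injectivity of that transform then closes the argument. This is precisely the strategy implemented in the main theorems of the present paper.

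The hard part, and the genuine obstruction to the full conjecture, is that for an arbitrary Riemannian manifold $(M,g)$ no mechanism is known to produce CGO solutions at all: the existence of a limiting Carleman weight is a rigid conformal condition that singles out product-type geometries, and no invariant substitute has been found beyond the CTA class. Compounding this, the gauge group of diffeomorphisms fixing $\partial M$ is infinite dimensional, so one must simultaneously identify the conformal class and pin down the correct diffeomorphism; the two-dimensional resolution of Nachman and Astala--Lassas--P\"aiv\"arinta exploits the conformal invariance of $\Delta_g$ in dimension two, which has no higher-dimensional analogue. A complete proof therefore appears to require either a substantial extension of the Carleman/CGO framework beyond CTA geometries, or a genuinely new microlocal route that extracts the full symbol of $\Delta_g$ directly from the Dirichlet-to-Neumann map without relying on an explicit family of special solutions.
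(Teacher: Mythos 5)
You have correctly recognized that the statement in question is an open conjecture: the paper states it as such and does not prove it (it explicitly says ``The general case remains a major open problem''), and what the paper actually establishes are partial results in the CTA setting (Theorems~\ref{claim1}--\ref{thm_calderon_cylinder_reconstruction2}). What you have written is therefore not a proof but an accurate survey of the standard reduction---boundary determination via symbol calculus to fix the metric to infinite order on $\partial M$, passage to the bilinear identity $\int_M \langle du_1,du_2\rangle_{g_1}\,dV_{g_1}=\int_M \langle du_1,du_2\rangle_{g_2}\,dV_{g_2}$ for solutions with matching Dirichlet data, and the CGO/Carleman-weight machinery for probing the interior---together with a correct identification of the principal obstruction: no mechanism is known for producing complex geometrical optics solutions on an arbitrary $(M,g)$, since the existence of a limiting Carleman weight is a rigid conformal condition that essentially forces CTA geometry (this is one of the main structural results of \cite{DKSaU}). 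Your account of the two-dimensional story (conformal invariance of $\Delta_g$) and of the infinite-dimensional gauge freedom is also faithful. Since there is no proof in the paper to compare against, the honest conclusion is the one you reached: the conjecture remains open, and the paper's contribution is to push the CGO method as far as it goes under the CTA hypothesis and, separately, to open a new route via the reduction to Gel'fand's problem and the boundary control method in the cylinder setting. If you want to sharpen your discussion, it would be worth noting this second, non-CGO route explicitly (Section~\ref{sec_bcmethod_first}), as it is the first uniqueness result for the Calder\'on problem using control-theoretic methods and suggests that alternatives to Carleman estimates may exist.
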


This statement has only been proved for real-analytic metrics \cite{LeU} with topological assumptions relaxed in \cite{LTU}, \cite{LaU}, and for Einstein metrics (which are real-analytic in the interior) \cite{GS}. The general case remains a major open problem, and we refer to \cite{DKSaU} for a discussion and further references. The corresponding two-dimensional result, involving an additional obstruction arising from the conformal invariance of the Laplace-Beltrami operator, is known \cite{LaU}. See \cite{Belishev_survey}, \cite{Belishev_rings} for another interesting approach to this problem.

The work \cite{DKSaU} introduced methods for studying the anisotropic Calder\'on problem in manifolds which are not real-analytic, but where the metric has certain form. This was based on the concept of \emph{limiting Carleman weights}, introduced earlier in the Euclidean case in \cite{KSU}. One of the main results of \cite{DKSaU} states that on a simply connected open manifold, the existence of a limiting Carleman weight is equivalent to the existence of a nontrivial parallel vector field for some conformal metric. Locally, this condition is equivalent with the manifold being conformal to a product of a Euclidean interval and some $(n-1)$-dimensional manifold. We formalize this notion in two definitions:

\begin{definition}
Let $(M,g)$ be a compact oriented manifold with $C^{\infty}$ boundary. In this paper we always assume that $n = \dim(M) \geq 3$.

\begin{enumerate}
\item[(a)]
$(M,g)$ is called \emph{transversally anisotropic} if $(M,g) \subset \subset (T,g)$ where $T = \mR \times M_0$, $g = e \oplus g_0$, $(\mR,e)$ is the Euclidean line, and $(M_0,g_0)$ is some compact $(n-1)$-dimensional manifold with boundary. Here $(M_0,g_0)$ is called the \emph{transversal manifold}.
\item[(b)]
$(M,g)$ is called \emph{conformally transversally anisotropic} (CTA) if $(M,c g)$ is transversally anisotropic for some smooth positive function $c$.
\end{enumerate}
\end{definition}

Examples of CTA manifolds include compact subdomains of the model spaces $\mR^n$, sphere $S^n$ minus a point, or hyperbolic space $H^n$, compact subdomains of locally conformally flat manifolds such as 3D symmetric spaces as long as they are contained in a conformally flat coordinate neighborhood, and conformally warped products 
$$
(M,g) \subset \subset (\mR \times M_0, g), \quad g = c(e \oplus f g_0)
$$
where $f$ is a positive function depending only on the Euclidean variable in $\mR \times M_0$. If $(x_1,x')$ are local coordinates in $\mR \times M_0$, the last condition reads in terms of matrices 
$$
g(x_1,x') = c(x_1,x') \left( \begin{array}{cc} 1 & 0 \\ 0 & f(x_1) g_0(x') \end{array} \right).
$$
See \cite{DKSaU}, \cite{DKS}, \cite{LiS} for more details.

The first main theorem in this paper considers the anisotropic Calder\'on problem in a fixed conformal class. Since any conformal diffeomorphism fixing the boundary must be the identity map, there is no obstruction to uniqueness arising from isometries in this case (see \cite{Li}). The article \cite{DKSaU} gave a uniqueness result for this problem on CTA manifolds if additionally the transversal manifold $(M_0,g_0)$ is \emph{simple}, meaning that any two points in $M_0$ are connected by a unique geodesic depending smoothly on the endpoints and that $\partial M_0$ is strictly convex (its second fundamental form is positive definite). Moreover, a reconstruction procedure was given in \cite{KSaU_reconstruction} and stability estimates (of double logarithmic type) were given in \cite{CaroSalo}. The proof used the fact that the geodesic ray transform is injective on simple manifolds. On general transversal manifolds we use the following definition.

\begin{definition}
We say that the (geodesic) \emph{ray transform} on the transversal manifold $(M_0,g_0)$ is injective if any function $f \in C(M_0)$ which integrates to zero over all nontangential geodesics in $M_0$ must satisfy $f = 0$. Here, a unit speed geodesic segment $\gamma: [0,L] \to M_0$ is called \emph{nontangential} if $\dot{\gamma}(0)$, $\dot{\gamma}(L)$ are nontangential vectors on $\partial M_0$ and $\gamma(t) \in M_0^{\text{int}}$ for $0 < t < L$.
\end{definition}

\begin{thm} \label{claim1}
Let $(M,g_1)$ and $(M,g_2)$ be two CTA manifolds in the same conformal class. Assume in addition that the ray transform in the transversal manifold is injective. If $C_{g_1} = C_{g_2}$, then $g_1 = g_2$.
\end{thm}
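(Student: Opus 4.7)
The plan is to reduce the theorem to a potential uniqueness problem for a Schr\"odinger operator on a transversally anisotropic model in the fixed conformal class, and then to attack the reduced problem using complex geometrical optics (CGO) solutions built from Gaussian beam quasimodes concentrated on single nontangential geodesics of $M_0$. I would fix a transversally anisotropic representative $\tilde g = e \oplus g_0$ of the conformal class and write $g_j = c_j \tilde g$ with $c_j > 0$. A standard Kohn--Vogelius boundary determination argument shows that $C_{g_j}$ determines the jets of $c_j$ on $\partial M$, so after possibly rescaling one may assume $c_1 = c_2$ and $\partial_\nu c_1 = \partial_\nu c_2$ on $\partial M$. The conformal intertwining of Laplacians then turns $C_{g_1} = C_{g_2}$ into equality of Cauchy data for two Schr\"odinger operators $-\Delta_{\tilde g} + q_j$ on $(M, \tilde g)$, where $q_j$ is a second order differential expression in $c_j$. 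An integration by parts yields the identity
$$
\int_M (q_1 - q_2) u_1 u_2 \, dV_{\tilde g} = 0
$$
for all solutions $u_j$ of $(-\Delta_{\tilde g} + q_j) u_j = 0$ in $M$. Producing enough such products to force $q_1 = q_2$ is the core of the proof; once $q_1 = q_2$, the matched boundary data turn the conformal factor identification into an elliptic uniqueness question giving $c_1 = c_2$ and hence $g_1 = g_2$.

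For the density step I would build CGO solutions
$$
u_1 = e^{-(\tau + i\lambda) x_1} (v_\tau + r_1), \qquad u_2 = e^{(\tau - i\lambda) x_1} (w_\tau + r_2),
$$
with $\tau \to \infty$, $\lambda \in \mR$ fixed, and $v_\tau, w_\tau$ Gaussian beam quasimodes on $M_0$ concentrated on a prescribed nontangential unit speed geodesic $\gamma : [0,L] \to M_0$. In Fermi coordinates around $\gamma$ each quasimode takes the WKB form $\tau^{(n-2)/4} e^{i\tau \Theta(x')} a(x')$, with $\Theta$ a complex phase solving the eikonal equation to infinite order along $\gamma$ and having positive definite transverse Hessian of $\im \Theta$, and $a$ an amplitude prescribed freely at one endpoint and then determined by transport equations along $\gamma$. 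After multiplication by a cutoff supported in a tubular neighborhood of $\gamma$ this yields a quasimode error of size $O(\tau^{-N})$ in $L^2$ for any $N$. Since $\varphi = x_1$ is a limiting Carleman weight for $\tilde g$, the associated Carleman estimate for the conjugated operator $e^{\tau x_1}(-\Delta_{\tilde g} + q_j) e^{-\tau x_1}$ then supplies remainders $r_j$ with $\norm{r_j}_{L^2(M)} = o(1)$ as $\tau \to \infty$.

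Substituting the CGOs into the integral identity, the real parts of the exponentials cancel, the $x_1$ factor becomes $e^{-2i\lambda x_1}$, and the transversal integral concentrates along $\gamma$ in the limit $\tau \to \infty$. After extending $q_1 - q_2$ by zero outside $M$ in the cylinder $\mR \times M_0$, I expect to obtain
$$
\int_\mR e^{-2i\lambda x_1} \int_0^L (q_1 - q_2)(x_1, \gamma(t)) \abs{a(t)}^2 \, dt \, dx_1 = 0
$$
for every $\lambda \in \mR$, every nontangential geodesic $\gamma$, and every nonvanishing transport amplitude $a$ along $\gamma$. Varying $a$ and applying Fourier inversion in $\lambda$ then yields that the geodesic ray transform of the slice $x' \mapsto (q_1 - q_2)(x_1, x')$ vanishes for almost every $x_1$, so the assumed injectivity of the transversal ray transform forces $q_1 = q_2$.

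The main technical obstacle is the Gaussian beam construction itself. In the simple-transversal case \cite{DKSaU} there is a global smooth phase solving the eikonal equation on all of $M_0$, which gives a clean WKB ansatz with uniform amplitude and easily controlled remainders; here the transversal geodesic flow may exhibit caustics or trapped orbits, so the phase $\Theta$ only exists in a tubular neighborhood of $\gamma$ and must be cut off. Tracking how the $\tau$-scaling of the quasimode amplitude interacts with the cutoff error and with the $L^2$ Carleman remainder bound, and ensuring that the cross terms $r_1 w_\tau$, $v_\tau r_2$, $r_1 r_2$ in the integral identity remain of strictly lower order than the leading concentration along $\gamma$, is the delicate step I anticipate; a secondary point is to carry out the quasimode construction on an extension of $M$ inside $\mR \times M_0$ so that boundary reflections of the beam do not enter the analysis.
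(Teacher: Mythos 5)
Your overall strategy -- boundary determination, conformal reduction to a Schr\"odinger potential problem on a transversally anisotropic reference, Gaussian beam quasimodes and Carleman estimates to build CGO solutions concentrating on a transversal geodesic, and ray transform injectivity -- is essentially the paper's (which states the Schr\"odinger uniqueness as Theorem~\ref{claim2} and derives Theorem~\ref{claim1} from it; you inline that step). One small remark on your conformal reduction: the paper chooses $g_2$ itself as the reference metric, so that after concluding potential uniqueness the conformal factor solves the linear Dirichlet problem $\Delta_{g_1}(c^{-(n-2)/4})=0$, $c^{-(n-2)/4}|_{\partial M}=1$, and is recovered by plain uniqueness of harmonic functions; your arbitrary product reference $\widetilde{g}=e\oplus g_0$ works too, but then $c_1^{(n-2)/4}$ and $c_2^{(n-2)/4}$ are two solutions of the same Schr\"odinger equation $(-\Delta_{\widetilde{g}}+\widetilde{q})w=0$ with matching Cauchy data, and you need a unique continuation argument to force them equal.

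The one step that would not go through as written is the final ``Fourier inversion in $\lambda$''. The limit of the CGO integral is not $\int_\mR e^{-2i\lambda x_1}\int_0^L q(x_1,\gamma(t))\,|a(t)|^2\,dt\,dx_1$ with $|a|^2$ independent of $\lambda$: because the quasimode frequency is the complex number $s=\tau+i\lambda$, the factor $|e^{is\Theta}|^2$ along $\gamma$ produces a genuine attenuation $e^{-2\lambda t}$ (equivalently, if you keep a real phase $e^{i\tau\Theta}$ and push the $\lambda$-part of $s^2$ into the transport equation, $|a|^2$ itself carries that same $e^{-2\lambda t}$). What one actually obtains is
$$
\int_0^L e^{-2\lambda t}\,\widehat{q}(2\lambda,\gamma(t))\,dt = 0 \qquad \text{for all } \lambda\in\mR \text{ and all nontangential }\gamma,
$$
the attenuated ray transform, with constant attenuation $-2\lambda$, of the $x_1$-Fourier transform $\widehat{q}(2\lambda,\cdot)$. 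The Fourier variable and the attenuation weight are tied to the same parameter $\lambda$, so plain Fourier inversion in $\lambda$ does not decouple them, and your claim that the ray transform of the slice $(q_1-q_2)(x_1,\cdot)$ vanishes for a.e.\ $x_1$ does not follow from the displayed identity. The correct final step (Section~\ref{sec_quasimodes}): evaluate at $\lambda=0$ to get the unattenuated transform of $\widehat{q}(0,\cdot)$ and conclude $\widehat{q}(0,\cdot)=0$; then differentiate the identity in $\lambda$ at $\lambda=0$ and iterate, at each order using ray transform injectivity together with the vanishing of the lower-order terms; since $q_1-q_2$ is compactly supported in $x_1$, the function $\xi\mapsto\widehat{q}(\xi,x')$ is entire, so the vanishing of all $\lambda$-derivatives at $0$ forces $\widehat{q}\equiv 0$ and hence $q_1=q_2$.
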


In fact this result is a consequence of a corresponding result for the Schr\"odinger equation. Let $q \in L^{\infty}(M)$, and define the Cauchy data set for the Schr\"odinger operator $-\Delta_g + q$ by 
$$
C_{g,q} = \{ (u|_{\partial M}, \partial_{\nu} u|_{\partial M}) \,;\, (-\Delta_g + q)u = 0 \text{ in } M, \ u \in H^1(M) \}.
$$
Again, the normal derivative $\partial_{\nu} u|_{\partial M}$ is interpreted in the weak sense as an element of $H^{-1/2}(\partial M)$.

\begin{thm} \label{claim2}
Let $(M,g)$ be a CTA manifold, and let $q_1, q_2 \in C(M)$. Assume in addition that the ray transform in the transversal manifold is injective. If $C_{g,q_1} = C_{g,q_2}$, then $q_1 = q_2$.
\end{thm}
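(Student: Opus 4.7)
The strategy is to derive and invert an integral identity of the form
$$
\int_M (q_1 - q_2) u_1 u_2 \, dV_g = 0
$$
for suitable pairs of solutions $u_j$ of $(-\Delta_g + q_j) u_j = 0$. Such an identity follows from Green's identity and the hypothesis $C_{g,q_1} = C_{g,q_2}$ by the standard argument. A preliminary conformal change of the unknown, based on the intertwining relation
$$
c^{(n+2)/4}(-\Delta_g + q) u = (-\Delta_{\tilde g} + \tilde q)(c^{(n-2)/4} u) \quad \text{with } g = c \tilde g,
$$
reduces the problem to the transversally anisotropic case $g = e \oplus g_0$ on a cylindrical extension of $M$, so I assume this form from now on.

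For the CGO solutions I use the limiting Carleman weight $\varphi(x) = x_1$ and seek
$$
u_1 = e^{(-1 + ih\lambda) x_1/h}(a_{1,h} + r_{1,h}), \qquad u_2 = e^{(1 + ih\lambda) x_1/h}(a_{2,h} + r_{2,h}),
$$
with $\lambda \in \mR$ fixed and remainders $\|r_{j,h}\|_{L^2(M)} = O(h)$; these are furnished by the Carleman estimates of \cite{DKSaU} once the transversal amplitudes $a_{j,h}$ approximately solve the conjugated equation, which at leading order reduces to the transverse eigenvalue problem $(-h^2 \Delta_{g_0} - 1) a = 0$. Because $(M_0, g_0)$ is not assumed simple, no global WKB amplitude for this equation exists, so I build $a_{j,h}$ as Gaussian beam quasimodes concentrated on a single nontangential unit speed geodesic $\gamma : [0,L] \to M_0$:
$$
a_{j,h}(x_1, y) = h^{-(n-2)/4} e^{i \Theta(y)/h} b_j(y, h),
$$
where $\Theta$ is a complex phase defined in a tubular neighbourhood of $\gamma$, vanishing on $\gamma$ and with positive definite transverse Hessian of $\im \Theta$ (obtained by solving a matrix Riccati equation along $\gamma$), and $b_j$ is supported near $\gamma$ and chosen by iteratively solving transport equations along $\gamma$ so that the quasimode error is $O_{L^2}(h^N)$ for any prescribed $N$, with $\|a_{j,h}\|_{L^2(M)} = O(1)$.

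Substituting into the integral identity, the real exponentials cancel, leaving an oscillatory factor $e^{2i\lambda x_1}$; the Gaussian profile in $y$ together with the factor $h^{-(n-2)/2}$ from the product $a_{1,h} a_{2,h}$ localizes the transverse integration to $\gamma$ by stationary phase. Discarding the remainder contributions (which are $O(h)$ thanks to the $O(1)$ normalization of the amplitudes) and letting $h \to 0$, I obtain
$$
\int_{-\infty}^{\infty} e^{2 i \lambda x_1} \int_0^L (q_1 - q_2)(x_1, \gamma(t)) \, dt \, dx_1 = 0
$$
for every $\lambda \in \mR$ and every nontangential geodesic $\gamma$ in $M_0$ (after extending $q_1 - q_2$ by zero to the ambient cylinder). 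Fourier inversion in $\lambda$ shows that the geodesic ray transform of $x_1 \mapsto (q_1 - q_2)(x_1, \cdot)$ vanishes on all nontangential geodesics for almost every $x_1$, and the injectivity hypothesis yields $q_1 = q_2$.

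The main technical obstacle is the Gaussian beam construction on the possibly non-simple transversal manifold: one must globally solve the matrix Riccati equation for the phase Hessian along $\gamma$ while preserving positive definiteness of its imaginary part, solve the transport equations to sufficient order so that the quasimode error is dominated by the $O(h)$ Carleman remainder, and carefully track the transverse stationary-phase asymptotics so that the limit produces the clean unweighted geodesic integral above rather than an attenuated variant. Once these are in place, the Fourier and ray-transform inversion at the end is routine.
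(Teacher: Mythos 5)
Your overall strategy---Green's identity, conformal reduction to $g = e \oplus g_0$, CGO solutions built from Gaussian beam quasimodes concentrated on a nontangential transversal geodesic $\gamma$, localization, and then ray transform injectivity---is the same as the paper's. However, there are two genuine gaps in the transversal asymptotics, the second of which invalidates your final inversion step.

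First, the ansatz $a_{j,h} = h^{-(n-2)/4} e^{i\Theta(y)/h} b_j$ with the \emph{same} phase $\Theta$ for both factors does not produce a geodesic delta when you form the product $u_1 u_2$ without complex conjugation. The product contains $e^{2i\Theta/h}$: if $\re\Theta|_\gamma = t$ (the standard Gaussian beam phase, forced by the eikonal equation $|d\Theta|^2 = 1$ together with $d\Theta|_\gamma$ aligned with $\dot\gamma$), this oscillates as $e^{2it/h}$ along $\gamma$ and the whole integral vanishes by non-stationary phase; while if you literally take $\Theta|_\gamma = 0$ as you write, the eikonal equation forces $\nabla(\re\Theta)|_\gamma$ to be a unit transverse vector, and then the transverse Gaussian integral $\int e^{2i y_\perp/h} e^{-\im H\,y\cdot y/h}\,dy$ is exponentially small. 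Either way the limit is trivial. To get a nontrivial limit one must take opposite phases (say $e^{i\Theta/h}$ and $e^{-i\bar\Theta/h}$, so the product becomes $e^{-2\im\Theta/h}$) or, as the paper does, use the integral identity with $u_1 \bar u_2$ and take $v_{s} = w_{s}$ so that the transversal factor is $|v_s|^2$.

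Second, and more importantly, once the construction is set up correctly the limit measure is not the unattenuated geodesic delta. Whether the imaginary part $\lambda$ of the frequency $s = \tau + i\lambda$ is carried in the phase ($e^{is\Theta}$, so that $|v_s|^2 \sim e^{-2\lambda\,\re\Theta}|a|^2 = e^{-2\lambda t}|a|^2$ on $\gamma$) or in the transport equation (the $O(\tau)$ term $\mp 2i\tau\lambda$ from $\tau^2 - s^2$ modifies the transport ODE, producing an $e^{\pm\lambda t}$ factor in the amplitude), the concentration is always $e^{\mp 2\lambda t}\,\delta_\gamma$. The attenuation constant $\mp 2\lambda$ cannot be removed: $\lambda \neq 0$ is precisely what generates the Fourier factor $e^{\pm 2i\lambda x_1}$, and it simultaneously attenuates along $\gamma$. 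So your claimed limit identity should read
$$
\int_{-\infty}^{\infty} e^{2i\lambda x_1} \int_0^L e^{\mp 2\lambda t}\,(q_1 - q_2)(x_1, \gamma(t))\,dt\,dx_1 = 0,
$$
i.e.\ the \emph{attenuated} ray transform of $\widehat{q}(\mp 2\lambda,\cdot)$ with constant attenuation vanishes, exactly Theorem~\ref{claim4} in the paper. Because $\lambda$ now appears both in the Fourier dual variable and in the attenuation, plain ``Fourier inversion in $\lambda$'' no longer decouples the two. The paper handles this by differentiating the vanishing identity repeatedly with respect to $\lambda$ at $\lambda = 0$: the $k=0$ term gives vanishing of the unattenuated transform of $\widehat q(0,\cdot)$, hence $\widehat q(0,\cdot) = 0$ by the injectivity assumption; then the $k=1$ term reduces (using $\widehat q(0,\cdot) = 0$) to vanishing of the transform of $\partial_\lambda \widehat q(0,\cdot)$; inductively all $\lambda$-derivatives of $\widehat q$ vanish at $0$, and analyticity of $\widehat q(\cdot,x')$ (as the Fourier transform of a compactly supported function, which in turn requires the boundary determination step $q_1|_{\partial M} = q_2|_{\partial M}$ to ensure the zero extension is continuous) forces $\widehat q \equiv 0$. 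This Taylor--plus--analyticity step is the genuinely nontrivial part of the inversion and is entirely missing from your proposal; without it the argument does not close.
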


Starting from the pioneering works \cite{C}, \cite{F}, \cite{HN}, \cite{Nachman}, \cite{No_1988}, \cite{SU}, see also \cite{Novikov_survey}, \cite{U_IP}, the standard approach for proving uniqueness and reconstruction results for the Calder\'on problem is based on special \emph{complex geometrical optics} solutions to elliptic equations. The paper \cite{DKSaU} presented a construction of such solutions on CTA manifolds and proved Theorems \ref{claim1} and \ref{claim2} under the additional restriction that the transversal manifold $(M_0,g_0)$ is simple, for instance the Carleman estimates required for the construction of correction terms, were valid without this additional restriction. However, in the end the simplicity assumption was used to produce solutions that concentrate near geodesics in $(M_0,g_0)$ and also to show that the potentials can be determined by inverting the geodesic ray transform (actually with attenuation) in the transversal manifold.

In this paper we remove the simplicity assumption on the transversal manifold in the construction of complex geometrical optics solutions, and prove Theorems \ref{claim1} and \ref{claim2} on any CTA manifold for which the ray transform is injective. In cases where the ray transform is not injective, we obtain partial results (see Theorems \ref{claim3} and \ref{thm_calderon_cylinder_reconstruction}) but the problem remains open in general. Injectivity of the ray transform is known to hold in the following classes of manifolds $(M_0,g_0)$:

\begin{enumerate}
\item[(a)]
Simple manifolds of any dimension (see \cite{Sh}).
\item[(b)]
Manifolds of dimension $\geq 3$ that have strictly convex boundary and are globally foliated by strictly convex hypersurfaces (\cite{UhlmannVasy}).
\item[(c)]
A class of non-simple manifolds of any dimension such that there are sufficiently many geodesics without conjugate points and the metric is close to a real-analytic one (see \cite{StU_geodesic_nonsimple} for the precise description of this class).
\item[(d)]
Any manifold having a dense subset that is covered by totally geodesic submanifolds in which the ray transform is injective (injectivity of the ray transform follows immediately from the injectivity in the totally geodesic submanifolds). Examples include subdomains of $(N_1 \times N_2, h_1 \oplus h_2)$ where $(N_1,h_1)$ has injective ray transform and $(N_2,h_2)$ is any manifold.
\item[(e)]
There are counterexamples to injectivity of the ray transform. The standard one is the sphere with a small cap removed: any function on the sphere that is odd with respect to the antipodal map and vanishes near the removed cap integrates to zero over nontangential geodesics. See also \cite{BaoZhang}, \cite{MSU}, \cite{StU_geodesic_fold} for microlocal analysis of the ray transform in non-simple geometries.
\end{enumerate}

In fact, Theorems \ref{claim1} and \ref{claim2} involving the ray transform will be obtained as a special case from a more general complex geometrical optics construction on CTA manifolds. If $(M,g)$ is a CTA manifold, so $(M,g) \subset \subset (\mR \times M_0, g)$ for some compact manifold $(M_0,g_0)$ where $g = c(e \oplus g_0)$, we denote points on $M$ by $x = (x_1,x')$ where $x_1$ is the Euclidean variable and $x' \in M_0$. If $q \in L^{\infty}(M)$, we will consider solutions of the Schr\"odinger equation $(-\Delta_g+q)u = 0$ in $M$ of the form 
$$
u(x) = e^{s x_1} c(x)^{-\frac{n-2}{4}}(v_s(x) + r_s(x))
$$
where $s$ is a slightly complex large frequency, 
$$
s = \tau + i\lambda,
$$
where the real parameter $\tau$ will tend to infinity while $\lambda \in \mC$ is fixed, and where $v_s = v_s(x') \in C^2(M_0)$ is a \emph{quasimode}, or \emph{approximate eigenfunction}, with frequency $s$ in the transversal manifold. The correction term $r_s$ will satisfy $\norm{r_s}_{L^2(M)} \to 0$ as $\tau \to \infty$. The concentration properties of the quasimodes $v_s$ in the high frequency limit as $\tau \to \infty$ will be crucial in determining properties of the potential.

\begin{definition}
If $\lambda \in \mC$, we denote by $\mathcal{M}_{\lambda}$ the set of all bounded measures $\mu$ on $M_0$ for which there is a sequence $(\tau_j)_{j=1}^{\infty}$ with $\tau_j \to \infty$ and a sequence $(v_j)_{j=1}^{\infty} \subset C^2(M_0)$ satisfying 
$$
\norm{(-\Delta_{g_0} - (\tau_j + i \lambda)^2) v_j}_{L^2(M_0)} = o(\tau_j), \quad \norm{v_j}_{L^2(M_0)} = O(1)
$$
as $j \to \infty$, such that in the weak topology of measures on $M_0$ one has 
$$
\lim_{j \to \infty} \abs{v_j}^2 \,dV_{g_0} = \mu
$$
where $dV_{g_0}$ is the volume form of $(M_0,g_0)$.
\end{definition}

\begin{thm} \label{claim3}
Let $(M,g)$ be a CTA manifold, and let $q_1, q_2 \in C(M)$. If $C_{g,q_1} = C_{g,q_2}$, then 
$$
\int_{M_0} \left[ \int_{-\infty}^{\infty}e^{-2i\lambda x_1} (c(q_1-q_2))(x_1,x') \,dx_1 \right] d\mu(x') = 0
$$
for any $\lambda \in \mC$ and any $\mu \in \mathcal{M}_{\lambda}$. Here $q_1-q_2$ is extended by zero to $\mR \times M_0$.
\end{thm}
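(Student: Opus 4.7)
My plan is to combine the standard Cauchy-data integral identity with CGO solutions whose transverse amplitudes come from the quasimode family defining $\mu$. Write $g = c(e \oplus g_0)$. Equal Cauchy data and Green's identity give the orthogonality
$$\int_M (q_1 - q_2)\, u_1 u_2\, dV_g = 0$$
for any $u_1, u_2 \in H^1(M)$ with $(-\Delta_g + q_1)u_1 = 0$ and $(-\Delta_g + q_2)u_2 = 0$, and this is the only place the Cauchy-data hypothesis enters.

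For $(\tau_j, v_j)$ the sequence defining $\mu$ and $s_j = \tau_j + i\lambda$, I will construct CGO solutions
$$u_1 = e^{-s_j x_1}\, c^{-\frac{n-2}{4}}(v_j + r_j^{(1)}), \qquad u_2 = e^{\bar s_j x_1}\, c^{-\frac{n-2}{4}}(\overline{v_j} + r_j^{(2)}).$$
The conformal substitution $u = c^{-(n-2)/4}\tilde u$ reduces the Schr\"odinger operator on $(M,g)$ to one on $(\mR \times M_0, e \oplus g_0)$ with an explicit modified potential $\tilde q_i$; conjugation by the exponentials then produces, for $r_j^{(i)}$, equations of the form
$$(-\partial_{x_1}^2 \pm 2 s_j \partial_{x_1} - s_j^2 - \Delta_{g_0} + \tilde q_i)\, r_j^{(i)} = F_j^{(i)},$$
with $\| F_j^{(i)}\|_{L^2(M)} = o(\tau_j)$ by the quasimode hypothesis (for $u_2$ one uses $(-\Delta_{g_0} - \bar s_j^2)\overline{v_j} = \overline{(-\Delta_{g_0} - s_j^2)v_j}$, which has the same $L^2$-norm). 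The Carleman estimates of \cite{DKSaU}, which do not require simplicity of $(M_0, g_0)$, then produce $r_j^{(i)} \in L^2(M)$ with $\| r_j^{(i)}\|_{L^2(M)} \leq C\tau_j^{-1}\| F_j^{(i)}\|_{L^2(M)} = o(1)$.

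Plugging the CGOs into the orthogonality relation, using $dV_g = c^{n/2}\,dx_1\,dV_{g_0}$, expanding the product $u_1 u_2$, and controlling the cross terms involving $r_j^{(i)}$ by Cauchy-Schwarz (via $\| v_j\|_{L^2(M_0)} = O(1)$ and $\| r_j^{(i)}\|_{L^2(M)} = o(1)$), I reduce the identity to
$$\int_{M_0} F(x')\, |v_j(x')|^2\, dV_{g_0}(x') + o(1) = 0, \quad F(x') := \int_\mR e^{-2i\lambda x_1}\bigl(c(q_1-q_2)\bigr)(x_1, x')\, dx_1.$$
Since $c \in C^\infty$ and $q_1 - q_2 \in C(M)$ is compactly supported when extended by zero, $F$ is continuous on $M_0$, and the weak convergence $|v_j|^2\, dV_{g_0} \rightharpoonup \mu$ then yields $\int_{M_0} F\, d\mu = 0$. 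The one real subtlety I expect is the case of non-real $\lambda$: the combined exponent $\bar s_j - s_j$ equals $-2i\operatorname{Re}(\lambda)$ rather than $-2i\lambda$, so $\bar s_j$ must be replaced in $u_2$ by $\tau_j - i\lambda$ together with an auxiliary quasimode at eigenvalue $(\tau_j - i\lambda)^2$; this is an elementary modification of the construction that preserves the weak limit $\mu$, and beyond this small adjustment the argument is a routine application of the CGO and Carleman machinery of \cite{DKSaU}.
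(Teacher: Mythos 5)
Your proof follows the same route as the paper: equal Cauchy data give the orthogonality identity $\int_M (q_1-q_2)u_1u_2\,dV_g=0$, the conformal reduction and the Carleman solvability result (Proposition~\ref{prop_carleman_estimates}) produce CGO solutions with $\norm{r_j^{(i)}}_{L^2}=o(1)$, and the weak convergence $\abs{v_j}^2\,dV_{g_0}\to\mu$ finishes. This is exactly the content of Proposition~\ref{prop_cgo_density_general} specialized to $\lambda_1=\lambda_2$ and $v_{s_j}=w_{s_j}$.

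Two remarks on gaps. First, to pass to the limit in $\int_{M_0} F\,\abs{v_j}^2\,dV_{g_0}\to\int_{M_0}F\,d\mu$ you need $F\in C(M_0)$, which in turn requires the zero extension of $c(q_1-q_2)$ to $\mR\times M_0$ to be continuous. This is not automatic from compactness of $M$; it needs $q_1=q_2$ on $\partial M$, which the paper obtains from boundary determination --- a second place where the hypothesis $C_{g,q_1}=C_{g,q_2}$ enters, contrary to your claim that Green's identity is its only use. Second, the fix you sketch for $\lambda\in\mC\smallsetminus\mR$ does not go through as stated: replacing $\bar s_j$ by $\tau_j-i\lambda$ in $u_2$ requires the amplitude to be a quasimode for $(\tau_j-i\lambda)^2$, while $\overline{v_j}$ is a quasimode for $(\tau_j-i\bar\lambda)^2$, and $(\tau_j-i\lambda)^2-(\tau_j-i\bar\lambda)^2=O(\tau_j\im\lambda)$ is not $o(\tau_j)$; and for an independently constructed quasimode $w_j$ at $(\tau_j-i\lambda)^2$ there is no reason for $v_jw_j\,dV_{g_0}$ to converge to $\mu$. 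For what it is worth, the paper's own deduction of Theorem~\ref{claim3} from Proposition~\ref{prop_cgo_density_general} is explicitly carried out with $\lambda_1=\lambda_2$ real, and this is the only case used in Theorems~\ref{claim1}, \ref{claim2}, \ref{claim4}.
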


The measures $\mu \in \mathcal{M}_{\lambda}$ are called \emph{semiclassical defect measures}, or \emph{quantum limits}, of the families of quasimodes $(v_{\tau+i\lambda})$. The properties of such measures are the central object of interest in the study of high frequency limits of eigenfunctions and in quantum ergodicity. In general, the dynamics of the geodesic flow of the underlying manifold $(M_0,g_0)$ will be visible in the semiclassical measures. These topics have a large literature, and we refer to \cite{JNT_survey}, \cite{Z_survey1}, \cite{Z_survey2} for surveys. However, our situation seems to be somewhat different from most of these works for the following three reasons:

\begin{enumerate}
\item
We only have access to limit measures in the base manifold $M_0$ instead of the more usual phase space measures in $T^* M_0$.
\item 
The measures $\mathcal{M}_{\lambda}$ are associated to a family of quasimodes in a manifold $(M_0,g_0)$ with boundary, but there is \emph{no boundary condition} imposed on the quasimodes. This leads to a certain amount of flexibility in our setting.
\item 
It is useful to consider measures for slightly complex frequencies $\tau + i\lambda$ where $\re(\lambda)$ is nonzero.
\end{enumerate}

Theorem \ref{claim2} will be obtained from Theorem \ref{claim3} by a rather direct construction of Gaussian beam quasimodes that concentrate on a given nontangential geodesic. This construction goes back at least to \cite{Arnold}, \cite{BabichBuldyrev}, \cite{BabichLazutkin}, \cite{CdV}, \cite{Hormander_gaussian} and has been developed further by many authors (often for hyperbolic equations), see for instance \cite{KKL}, \cite{Ralston}. In our case, we need the next result which follows by adapting the methods in the literature in a suitable way. The fact that the frequency is slightly complex leads to the attenuated geodesic ray transform with constant attenuation $-2\lambda$, but eventually analyticity will allow to make a reduction to the case $\lambda = 0$.

\begin{thm} \label{claim4}
Let $(M_0,g_0)$ be a compact oriented manifold with smooth boundary, let $\gamma: [0,L] \to M_0$ be a nontangential geodesic, and let $\lambda \in \mR$. For any $K > 0$ there is a family of functions $(v_s) \subset C^{\infty}(M_0)$, where $s = \tau+i\lambda$ and $\tau \geq 1$, such that 
$$
\norm{(-\Delta_{g_0} - s^2) v_s}_{L^2(M_0)} = O(\tau^{-K}), \quad \norm{v_s}_{L^2(M_0)} = O(1)
$$
as $\tau \to \infty$, and for any $\psi \in C(M_0)$ one has 
$$
\lim_{\tau \to \infty} \int_{M_0} \abs{v_s}^2 \psi \,dV_{g_0} = \int_0^L e^{-2\lambda t} \psi(\gamma(t)) \,dt.
$$
\end{thm}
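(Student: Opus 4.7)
The plan is to implement the Gaussian beam construction, adapted to the slightly complex frequency $s=\tau+i\lambda$. First I would enlarge $(M_0,g_0)$ to an open manifold $(\widetilde M_0,g_0)$ and extend $\gamma$ to a geodesic $\widetilde\gamma:[-\varepsilon,L+\varepsilon]\to\widetilde M_0$ with $\widetilde\gamma(t)\in\widetilde M_0\setminus M_0$ for $t\notin[0,L]$ near the endpoints; the non-tangentiality of $\gamma$ at the boundary makes this possible. In a tube around $\widetilde\gamma$ I introduce Fermi coordinates $(t,y)\in\mR\times\mR^{n-2}$ so that $\widetilde\gamma=\{y=0\}$, $g_0=dt^2+g^0_{jk}(t,y)\,dy^j\,dy^k$, and $g^0_{jk}(t,0)=\delta_{jk}$. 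The ansatz is
$$
v_s(t,y)=\tau^{(n-2)/4}\,\chi(y/\delta)\,e^{is\Theta(t,y)}\sum_{j=0}^N s^{-j} a_j(t,y),
$$
where $\chi\in C_c^\infty(\mR^{n-2})$ equals $1$ near $0$, and $\Theta$ and the $a_j$ are smooth functions to be constructed as formal Taylor polynomials in $y$ of sufficiently high order along $\widetilde\gamma$.

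Inserting the ansatz into $-\Delta_{g_0}-s^2$ and equating powers of $s$ produces the eikonal equation $\langle d\Theta,d\Theta\rangle_{g_0}=1$ and a hierarchy of transport equations for the $a_j$. Writing $\Theta(t,y)=t+\tfrac12\langle H(t)y,y\rangle+\sum_{|\alpha|\geq 3}\Theta_\alpha(t)y^\alpha/\alpha!$, the $y^0$ and $y^1$ equations hold automatically (the latter because $\widetilde\gamma$ is a geodesic), and the $y^2$ equation becomes the matrix Riccati equation
$$
H'(t)+H(t)^2+K(t)=0,
$$
where $K(t)$ is the curvature matrix for the Jacobi equation along $\widetilde\gamma$; the coefficients $\Theta_\alpha$ for $|\alpha|\geq 3$ satisfy linear first-order ODEs along $\widetilde\gamma$ and are obtained successively. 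The principal technical obstacle is to construct $H(t)$ globally on $[-\varepsilon,L+\varepsilon]$ with $\im H(t)$ positive definite. Following standard Gaussian beam theory I substitute $H=Y'Y^{-1}$, with $Y$ solving the Jacobi matrix equation $Y''+KY=0$, and choose complex initial data such as $Y(-\varepsilon)=I$, $Y'(-\varepsilon)=iI$. The Wronskian $Y^*Y'-(Y')^*Y$ is then a purely imaginary constant, which forces $Y(t)$ to remain invertible for all $t\in[-\varepsilon,L+\varepsilon]$ and $\im H(t)$ to remain positive definite.

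The amplitudes are constructed recursively. Along $y=0$ the leading transport equation reduces to a linear ODE for $a_0(t,0)$; the $y$-Taylor coefficients of $a_0$ and the subsequent $a_1,\ldots,a_N$ are determined by further linear ODEs with inhomogeneities from previous terms. For $N$ large enough, combined with the fact that $\chi'$ is supported where $e^{-\tau\,\im\Theta}$ is exponentially small, one obtains $\norm{(-\Delta_{g_0}-s^2)v_s}_{L^2(M_0)}=O(\tau^{-K})$. To identify the limit measure, observe that $|e^{is\Theta}|^2=e^{-2\tau\im\Theta-2\lambda\re\Theta}$ with $\im\Theta|_{y=0}=0$, $\re\Theta|_{y=0}=t$, and $\im\Theta(t,y)=\tfrac12\langle\im H(t)y,y\rangle+O(\abs{y}^3)$. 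Laplace's method in $y$ then yields, for any $\psi\in C(M_0)$,
$$
\int_{M_0}\abs{v_s}^2\psi\,dV_{g_0}=\pi^{(n-2)/2}\int_0^L \abs{a_0(t,0)}^2 \det(\im H(t))^{-1/2}e^{-2\lambda t}\psi(\gamma(t))\,dt+o(1),
$$
where the error near $\gamma(0),\gamma(L)$ is $O(\tau^{-1/2})$ by non-tangentiality. A Wronskian-type identity for the leading transport equation shows that $\abs{a_0(t,0)}^2\det(\im H(t))^{-1/2}$ is constant along $\widetilde\gamma$, and normalizing the initial amplitude so that this constant equals $\pi^{-(n-2)/2}$ yields both $\norm{v_s}_{L^2(M_0)}=O(1)$ and the required limit $\int_0^L e^{-2\lambda t}\psi(\gamma(t))\,dt$.
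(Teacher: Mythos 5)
Your construction follows the same Gaussian beam strategy as the paper: embed in a larger manifold, use Fermi coordinates along an extended geodesic, a complex phase with quadratic part governed by a matrix Riccati equation, recursive transport equations for the amplitudes, and Laplace's method for the limit measure, with the normalization pinned down by a Wronskian/determinant identity. Your handling of the Riccati equation via $H=Y'Y^{-1}$ with $Y$ solving the Jacobi equation and $Y(-\varepsilon)=I$, $Y'(-\varepsilon)=iI$ is a perfectly good alternative to the paper's appeal to a Riccati lemma from the boundary-control literature; the invertibility-of-$Y$ and positivity-of-$\mathrm{Im}\,H$ argument via the constant Wronskian is correct.

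However, there is a genuine gap: you implicitly assume that a global tube of Fermi coordinates exists around $\widetilde\gamma$, which requires $\gamma$ to have no self-intersections. Nothing in the hypotheses rules this out, and since the whole point of the theorem is to dispense with simplicity of the transversal manifold, self-intersecting nontangential geodesics are exactly the new situations one must handle. In that case the Fermi coordinate system is only local, the quasimode must be defined by gluing copies $v_s^{(j)}$ constructed in overlapping coordinate charts $U^{(j)}$ (with compatible ODE initial data across overlaps), and near each self-intersection point one faces a finite sum $\sum_l v_s^{(l)}$ of Gaussian beam branches. The limit of $\int |v_s|^2\psi\,dV_{g_0}$ then contains cross terms $\int v_s^{(l)}\overline{v_s^{(l')}}\psi\,dV$ for $l\ne l'$, and one must show these vanish as $\tau\to\infty$. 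This is done in the paper by observing that, since self-intersections are transversal, $\mathrm{Re}\,(d\Theta^{(l)}-d\Theta^{(l')})$ is nonvanishing near the intersection point, and then using a non-stationary phase integration by parts together with the $L^2(\partial M_0)$ bound on $v_s$ from nontangentiality; the resulting cross terms are $O(\tau^{-1/2})$. Your proposal omits the gluing construction entirely and therefore cannot justify the limit $\int_0^L e^{-2\lambda t}\psi(\gamma(t))\,dt$ when $\gamma$ self-intersects. To repair it, you need (i) a finiteness statement for self-intersection times, (ii) a construction of compatible Fermi charts along the pieces of the geodesic, and (iii) the cross-term estimate via non-stationary phase.
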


We remark that a similar Gaussian beam quasimode construction was used to deal with partial data inverse problems in the paper \cite{KSa} which was in preparation simultaneously with this manuscript. It is an interesting question whether other quasimode constructions could be used to extract more information about the potentials via Theorem \ref{claim3}. In particular, the following question is of interest. (By Theorem \ref{claim4} we know that this question has a positive answer if $\lambda = 0$ for any $(M_0,g_0)$ in which the ray transform is injective; on the other hand having $\lambda \neq 0$ might help.)

\begin{question} \label{question_intro}
Let $(M_0,g_0)$ be a compact oriented manifold with smooth boundary, and let $\lambda \in \mR$. Under which conditions on $(M_0,g_0)$ is the set $\mathcal{M}_{\lambda}$ dense in the set of all bounded measures on $M_0$?
\end{question}

The previous results are all based on extensions of the complex geometrical optics method. In the final results of this paper, we will use a completely different approach and reduce the anisotropic Calder\'on problem to an inverse problem for the wave equation. To motivate this, note that the Laplace-Beltrami operator $\Delta_g$ in a product type manifold $(\mR \times M_0, g)$, where $g = e \oplus g_0$ and we now write $t$ for the Euclidean variable, has the form 
$$
\partial_t^2 + \Delta_{g_0}.
$$
By formally complexifying the $t$ variable by $t \mapsto it$ (Wick rotation), we arrive at the wave operator 
$$
\partial_t^2 - \Delta_{g_0}.
$$

Let us next describe a standard inverse problem for the wave equation. If $(M_0,g_0)$ is a compact oriented manifold with smooth boundary, if $q_0 \in C(M_0)$, and if $T > 0$, consider the initial-boundary value problem 
\begin{align*}
(\partial_t^2 - \Delta_{g_0} + q_0) u &= 0 \quad \text{in } (0,T) \times M_0, \\
u(0) = \partial_t u(0) &= 0, \\
u|_{(0,T) \times \partial M_0} &= f.
\end{align*}
This problem has a unique solution $u \in C^{\infty}((0,T) \times M_0)$ for any $f \in C^{\infty}_c((0,T) \times \partial M_0)$, and we can define the hyperbolic DN map 
$$
\Lambda_{g_0,q_0}^{Hyp}: C^{\infty}_c((0,T) \times \partial M_0) \to C^{\infty}((0,T) \times \partial M_0), \ \ f \mapsto \partial_{\nu} u|_{(0,T) \times \partial M_0}.
$$
The inverse problem is to determine the metric $g_0$ up to isometry and the potential $q_0$ from the knowledge of the DN map $\Lambda_{g_0,q_0}^{Hyp}$. This problem is closely related (and often equivalent) to an inverse boundary spectral problem \cite{KKL}, to a multidimensional Borg-Levinson theorem \cite{NSU}, and also to an inverse problem posed by Gel'fand \cite{Gelfand}. In this paper, the wave equation inverse problem will be called the \emph{Gel'fand problem}, although of course there are many other important problems due to Gel'fand.

The Gel'fand problem in the above formulation has a positive answer, under the natural necessary condition that $T > 2 r(M_0)$ where $r(M_0) = \sup \{ r > 0 \,;\, B(x,r) \subset M_0^{\text{int}} \text{ for some } x \in M_0 \}$ is the time needed to fill in the manifold by waves from the boundary. This follows from the boundary control method introduced by Belishev \cite{Belishev} and later developed by several authors; we refer to the book \cite{KKL} for further details. The boundary control method is based on three components:

\begin{enumerate}
\item
Integration by parts (\emph{Blagovestchenskii identity}): recover inner products of solutions at a fixed time from the hyperbolic DN map.
\item 
Approximate controllability based on the unique continuation theorem of Tataru \cite{Tataru}: solutions $u(t_0,\,\cdot\,)$ are $L^2$ dense in the appropriate domain of influence.
\item 
Recovering the coefficients: this uses a boundary distance representation of $(M_0,g_0)$ together with projectors to domains of influence and special solutions such as Gaussian beams.
\end{enumerate}

An elliptic analogue of the Gel'fand problem is given by the following version of the anisotropic Calder\'on problem. Let $(M_0,g_0)$ be a compact oriented manifold with smooth boundary, let $q_0 \in C^{\infty}(M_0)$, and let $T = \mR \times M_0$ be an infinite cylinder equipped with the metric $g = e \oplus g_0$. Write $(t,x)$ for the coordinates in $\mR \times M_0$. Let also $\text{Spec}(-\Delta_{g_0}+q_0) = \{ \lambda_j \}_{j=1}^{\infty}$ where $\lambda_1 \leq \lambda_2 \leq \ldots$ is the set of Dirichlet eigenvalues of $-\Delta_{g_0}+q_0$ in $(M_0,g_0)$. Consider the Schr\"odinger equation in $T$, 
$$
(-\partial_t^2 - \Delta_{g_0} + q_0 - \lambda)u = 0 \ \ \text{in } T, \qquad u|_{\partial T} = f.
$$

Let us first make the assumption that $\lambda \in \mC \setminus [\lambda_1,\infty)$, that is, $\lambda$ is outside the continuous spectrum of $-\Delta_g + q_0$ in $T$. Then for any $f \in C^{\infty}_c(\partial T)$ the above equation has a unique solution $C^{\infty}(T) \cap H^1(T)$, and there is a linear DN map 
$$
\Lambda_{g_0,q_0}^{Ell}(\lambda): C^{\infty}_c(\partial T) \to C^{\infty}(\partial T), \ \ f \mapsto \partial_{\nu} u|_{\partial T}.
$$

The next result shows that one can reconstruct the isometry class of an unknown manifold $(M_0,g_0)$ and also a potential $q_0$ from the knowledge of $\partial M_0$ and the DN map $\Lambda_{g_0,q_0}^{Ell}(\lambda)$.

\begin{thm} \label{thm_calderon_cylinder_reconstruction}
Given the data $(\partial T, \Lambda_{g_0,q_0}^{Ell}(\lambda))$ for a fixed $\lambda \in \mC \setminus [\lambda_1,\infty)$, where $\partial T = \mR \times \partial M_0$ and $\Lambda_{g_0,q_0}^{Ell}(\lambda): C^{\infty}_c(\partial T) \to C^{\infty}(\partial T)$ corresponds to the Schr\"odinger operator $-\Delta_g+q_0$ on $T$, one can reconstruct the potential $q_0$ and a Riemannian manifold $(\widehat{M}_0,\widehat{g}_0)$ isometric to $(M_0,g_0)$.
\end{thm}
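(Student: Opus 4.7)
The plan is to reduce the elliptic problem on the cylinder $T = \mR \times M_0$ to an inverse boundary spectral problem on $(M_0,g_0)$ and then to invoke the boundary control method of \cite{KKL}.

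First, I would exploit the translation invariance in $t$ of both the cylinder and the operator $-\partial_t^2 - \Delta_{g_0} + q_0 - \lambda$: the map $\Lambda_{g_0,q_0}^{Ell}(\lambda)$ commutes with $t$-translations and hence acts as a Fourier multiplier in $t$. For any boundary datum $f \in C^\infty_c(\mR \times \partial M_0)$, taking the Fourier transform in $t$ decouples the cylinder equation mode by mode into the family of elliptic equations on $M_0$,
$$
(-\Delta_{g_0} + q_0 - z_\xi) v_\xi = 0 \text{ in } M_0, \quad v_\xi|_{\partial M_0} = \hat f(\xi,\,\cdot\,), \qquad z_\xi := \lambda - \xi^2.
$$
The assumption $\lambda \in \mC \setminus [\lambda_1,\infty)$ guarantees that $z_\xi$ avoids $\text{Spec}(-\Delta_{g_0}+q_0)$ for every $\xi \in \mR$, so each $v_\xi$ is uniquely determined. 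Consequently, knowing $\Lambda_{g_0,q_0}^{Ell}(\lambda)$ is equivalent, via Fourier inversion in $t$, to knowing the one-parameter family of elliptic DN maps on $M_0$,
$$
\Lambda^{M_0}(z_\xi) : \hat f(\xi,\,\cdot\,) \mapsto \partial_\nu v_\xi|_{\partial M_0}, \qquad \xi \in \mR.
$$

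Next, I would use the standard fact that $z \mapsto \Lambda^{M_0}(z)$ extends to a meromorphic operator-valued family on $\mC$, with simple poles at the Dirichlet eigenvalues $\lambda_j$ of $-\Delta_{g_0}+q_0$ on $M_0$ and residues whose Schwartz kernels are the finite sums of rank-one tensors $\partial_\nu \phi_j(x)\, \partial_\nu \phi_j(y)$ over an $L^2(M_0)$-orthonormal basis of each eigenspace. Since $\{z_\xi : \xi \in \mR\}$ accumulates on a real half-line (when $\lambda$ is real) or on a horizontal line (when $\lambda$ is non-real), analytic continuation uniquely determines the entire meromorphic family from the data of step one. Reading off poles recovers the Dirichlet spectrum $\{\lambda_j\}_{j=1}^\infty$, and the corresponding residues yield the boundary data $\{\partial_\nu \phi_j|_{\partial M_0}\}_{j=1}^\infty$ of a compatible orthonormal basis of eigenfunctions; this is the \emph{boundary spectral data} of $(M_0,g_0,q_0)$.

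Finally, I would feed the boundary spectral data into the boundary control method of Belishev--Kurylev (\cite{Belishev}, \cite{KKL}). Using the Blagovestchenskii identity, approximate controllability derived from Tataru's sharp unique continuation theorem \cite{Tataru}, the boundary distance representation, and focusing Gaussian-beam constructions, the triple $(\partial M_0, \{\lambda_j\}, \{\partial_\nu \phi_j|_{\partial M_0}\})$ constructively determines a Riemannian manifold $(\widehat M_0,\widehat g_0)$ isometric to $(M_0,g_0)$ together with the potential $q_0$, as required. The main obstacle in this scheme is the extraction of the full boundary spectral data from the \emph{single} cylinder DN map at one complex energy $\lambda$, i.e.\ a careful residue and analytic continuation analysis of $\Lambda^{M_0}(z)$ along the curve $\{z_\xi\}$; once this reduction to boundary spectral data is in place, the reconstruction itself is a direct application of the existing BC theory.
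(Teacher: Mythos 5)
Your proposal follows essentially the same route as the paper: reduce the cylinder DN map to the one-parameter family of transversal DN maps $\Lambda^{M_0}(\lambda-k^2)$, analytically continue the meromorphic family $z\mapsto\Lambda^{M_0}(z)$, and then hand the result to the boundary control method. Two small points of comparison. First, you phrase the reduction as a Fourier-multiplier argument, whereas the paper makes this rigorous by extending $\Lambda_{g_0,q_0}^{T}(\lambda)$ by density from $C^\infty_c(\partial T)$ to weighted Sobolev spaces $H^{3/2}_\delta(\partial T)$ (any $\delta<-1/2$) and then literally applying it to $e^{ikt}h$; this is the technical step your sketch glosses over (for $f\in C^\infty_c(\partial T)$ the Fourier modes $\hat f(\xi,\cdot)$ cannot be isolated individually without such an extension), and the identity $\Lambda_{g_0,q_0}^{M_0}(\lambda-k^2)h=e^{-ikt}\Lambda_{g_0,q_0}^{T}(\lambda)(e^{ikt}h)$ is exactly Proposition~\ref{lemma_dnmap_transversal_cylinder}. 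Second, after the meromorphic continuation you propose to extract the boundary spectral data $\{\lambda_j,\partial_\nu\phi_j|_{\partial M_0}\}$ from poles and residues and then apply the BC method, while the paper instead passes from $\{\Lambda^{M_0}(\mu)\}_{\mu\in\mC}$ to the hyperbolic DN map (citing the equivalences in \cite{KKL}, \cite{KKLM}) and then applies the BC method; these are two standard and interchangeable entry points into the same machinery, so this is a stylistic rather than substantive difference.
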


We obtain a uniqueness result as a consequence ($\widetilde{\lambda}_1$ is the first Dirichlet eigenvalue of $-\Delta_{\widetilde{g}_0}+\widetilde{q}_0$ in $M_0$):

\begin{thm} \label{thm_calderon_cylinder_uniqueness}
Let $(M_0,g_0)$ and $(M_0,\widetilde{g}_0)$ be two compact manifolds with boundary $\partial M_0$, and let $q_0, \widetilde{q}_0 \in C^{\infty}(M_0)$. If 
$$
\Lambda_{g_0,q_0}^{Ell}(\lambda) = \Lambda_{\widetilde{g}_0,\widetilde{q}_0}^{Ell}(\lambda) \ \ \text{for some } \lambda \in \mC \setminus ([\lambda_1,\infty) \cup [\widetilde{\lambda}_1,\infty)),
$$
then $\widetilde{g}_0 = \psi_0^* g_0$ for some diffeomorphism $\psi_0: M_0 \to M_0$ with $\psi_0|_{\partial M_0} = \text{Id}$, and also $\widetilde{q}_0 = \psi_0^* q_0$.
\end{thm}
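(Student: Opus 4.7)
The plan is to derive Theorem \ref{thm_calderon_cylinder_uniqueness} as a direct corollary of the reconstruction statement in Theorem \ref{thm_calderon_cylinder_reconstruction}. The spectral hypothesis $\lambda \in \mC \setminus ([\lambda_1,\infty) \cup [\widetilde{\lambda}_1,\infty))$ ensures that the two DN maps $\Lambda_{g_0,q_0}^{Ell}(\lambda)$ and $\Lambda_{\widetilde{g}_0,\widetilde{q}_0}^{Ell}(\lambda)$ are both well-defined on the cylinder $T = \mR \times M_0$; by assumption they coincide, and $\partial T = \mR \times \partial M_0$ is shared by both setups. Hence the pair $(\partial T, \Lambda^{Ell}(\lambda))$ presented to the reconstruction procedure is identical for the two configurations.

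Applying Theorem \ref{thm_calderon_cylinder_reconstruction} to this common data produces a single triple $(\widehat{M}_0, \widehat{g}_0, \widehat{q}_0)$ which, as the conclusion of that theorem applied separately to each configuration, is isometric to both $(M_0,g_0,q_0)$ and $(M_0,\widetilde{g}_0,\widetilde{q}_0)$ with matching potentials. There thus exist diffeomorphisms $\phi, \widetilde{\phi}: M_0 \to \widehat{M}_0$ with $\phi^* \widehat{g}_0 = g_0$, $\widetilde{\phi}^* \widehat{g}_0 = \widetilde{g}_0$, $\phi^* \widehat{q}_0 = q_0$, and $\widetilde{\phi}^* \widehat{q}_0 = \widetilde{q}_0$. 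Setting $\psi_0 = \phi^{-1} \circ \widetilde{\phi}$ yields the required diffeomorphism $M_0 \to M_0$ with $\psi_0^* g_0 = \widetilde{g}_0$ and $\psi_0^* q_0 = \widetilde{q}_0$.

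The main obstacle is verifying the boundary condition $\psi_0|_{\partial M_0} = \text{Id}$, since Theorem \ref{thm_calderon_cylinder_reconstruction} only produces $\widehat{M}_0$ up to abstract Riemannian isometry. To track boundary behavior I would inspect the reconstruction procedure: after the Wick-rotation reduction to a Gel'fand inverse problem on $M_0$ (converting the DN map at frequency $\lambda$, via a Fourier-type transform in the axial variable $t$, into spectral or hyperbolic boundary data), the model $\widehat{M}_0$ is built by the boundary control method from a boundary distance representation, in which each point $x \in M_0$ is encoded by the function $y \mapsto d_{g_0}(x,y)$ on $\partial M_0$. Because $\partial M_0$ is part of the input and every boundary point is uniquely represented by the distance function vanishing at itself, the isometries $\phi$ and $\widetilde{\phi}$ can be arranged to restrict to the identity on $\partial M_0$ under the natural identification of the reconstructed boundary with the given one. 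This yields $\psi_0|_{\partial M_0} = \text{Id}$ and completes the proof.
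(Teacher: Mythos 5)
Your proposal is correct and follows exactly the route the paper intends: the paper states Theorem \ref{thm_calderon_cylinder_uniqueness} simply as a consequence of Theorem \ref{thm_calderon_cylinder_reconstruction} without writing out the details, and your argument supplies precisely those details, including the observation that the boundary control reconstruction (via boundary distance functions) canonically identifies the reconstructed boundary with the given $\partial M_0$, so the isometries $\phi, \widetilde{\phi}$ agree on $\partial M_0$ and $\psi_0 = \phi^{-1}\circ\widetilde{\phi}$ fixes the boundary pointwise.
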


Next we consider the case where the spectral parameter is in the continuous spectrum but not in the set of thresholds (that is, $\lambda \in [\lambda_1,\infty) \setminus \text{Spec}(-\Delta_{g_0}+q_0)$). In that case one needs a radiation condition to have a well defined DN map, and one obtains the following result for the inverse problem. We refer to Section \ref{sec_bcmethod_second} for the details.
  
\begin{thm} \label{thm_calderon_cylinder_reconstruction2}
Given the data $(\partial T, \Lambda_{g_0,q_0}^{T}(\lambda))$ for a fixed $\lambda \in [\lambda_1, \infty) \setminus \{ \lambda_1,\lambda_2,\ldots \}$, where $\partial T = \mR \times \partial M_0$ and $\Lambda_{g_0,q_0}^{T}(\lambda): C^{\infty}_c(\partial T) \to C^{\infty}(\partial T)$ corresponds to the Schr\"odinger operator $-\Delta+q_0$ on $T$, one can reconstruct the potential $q_0$ and a Riemannian manifold $(\widehat{M}_0,\widehat{g}_0)$ isometric to $(M_0,g_0)$.
\end{thm}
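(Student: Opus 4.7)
The plan is to reduce Theorem \ref{thm_calderon_cylinder_reconstruction2} to the already-available Theorem \ref{thm_calderon_cylinder_reconstruction} by showing that, for any fixed $\lambda \in [\lambda_1, \infty) \setminus \{\lambda_j\}$, the data $(\partial T, \Lambda^T_{g_0,q_0}(\lambda))$ determine the elliptic data $(\partial T, \Lambda^{Ell}_{g_0,q_0}(\mu_0))$ at every $\mu_0 \in \mC \setminus [\lambda_1, \infty)$. Since $\partial T = \mR \times \partial M_0$ is given, $\partial M_0$ is immediately available; the content of the reduction is reconstructing the elliptic DN map on the cylinder at a non-spectral parameter.

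The key tool is the partial Fourier transform in $t$. Because the product metric $g = e \oplus g_0$, the operator $-\Delta_g + q_0$, and the outgoing/limiting-absorption prescription used to define $\Lambda^T_{g_0,q_0}(\lambda)$ all commute with $t$-translations, the operator $\Lambda^T_{g_0,q_0}(\lambda)$ is a convolution in $t$. Fourier transformation in $t$ formally decouples the cylinder problem into a one-parameter family of Dirichlet problems on the slice $M_0$ for $-\Delta_{g_0} + q_0 - (\lambda - \xi^2)$. For frequencies with $\xi^2 > \lambda - \lambda_1$ the slice parameter $\mu = \lambda - \xi^2$ lies below $\lambda_1$ and is not a Dirichlet eigenvalue of $-\Delta_{g_0} + q_0$, so the slice problem is uniquely solvable without any radiation correction, and one has
$$
\widehat{\Lambda^T_{g_0,q_0}(\lambda) f}(\xi, \cdot) = \Lambda_{g_0,q_0}(\lambda - \xi^2) \hat{f}(\xi, \cdot),
$$
where $\Lambda_{g_0,q_0}(\mu): C^{\infty}(\partial M_0) \to C^{\infty}(\partial M_0)$ is the Dirichlet-to-Neumann map for $-\Delta_{g_0} + q_0 - \mu$ on the single manifold $M_0$. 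Testing against separated-variable data $f(t,y) = \chi(t) h(y)$ with $\widehat{\chi}$ supported in $|\xi| \gg 1$ then recovers $\Lambda_{g_0,q_0}(\mu)$ as an operator on any prescribed $h$ for every real $\mu < \lambda_1$.

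By analytic Fredholm theory the family $\mu \mapsto \Lambda_{g_0,q_0}(\mu)$ is meromorphic in $\mu \in \mC$ with poles precisely at $\{\lambda_j\}$, so unique continuation from the half-line $(-\infty, \lambda_1)$ determines $\Lambda_{g_0,q_0}(\mu)$ for every $\mu \in \mC \setminus \{\lambda_j\}$. Given any $\mu_0 \in \mC \setminus [\lambda_1, \infty)$ we then synthesize $\Lambda^{Ell}_{g_0,q_0}(\mu_0)$ by inverse Fourier transform in $\xi$ of $\Lambda_{g_0,q_0}(\mu_0 - \xi^2) \hat{f}(\xi, \cdot)$, which is unambiguous because $\mu_0 - \xi^2$ avoids $\{\lambda_j\}$ for every real $\xi$. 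Applying Theorem \ref{thm_calderon_cylinder_reconstruction} to this reconstructed elliptic data then yields $q_0$ and an isometric copy $(\widehat{M}_0, \widehat{g}_0)$ of $(M_0, g_0)$.

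I expect the main obstacle to be the careful justification of the Fourier-multiplier identity above in the presence of the outgoing radiation condition used to define $\Lambda^T_{g_0,q_0}(\lambda)$. At low frequencies $\xi^2 \leq \lambda - \lambda_1$ the symbol $\xi \mapsto \Lambda_{g_0,q_0}(\lambda - \xi^2)$ acquires simple poles at the finitely many resonances $\xi = \pm\sqrt{\lambda - \lambda_j}$ with $\lambda_j < \lambda$, and the definition $\Lambda^T_{g_0,q_0}(\lambda) = \lim_{\varepsilon \to 0^+} \Lambda^{Ell}_{g_0,q_0}(\lambda + i\varepsilon)$ must be shown to Fourier-transform consistently, with Sokhotski-Plemelj resolving the poles via the $+i0$ prescription. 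Fortunately only the non-resonant high-frequency regime $|\xi| > \sqrt{\lambda - \lambda_1}$ is used in the extraction, where the Fourier modes of the outgoing solution are smooth and coincide with the standard elliptic Dirichlet solutions on $M_0$ by direct mode expansion in the Dirichlet eigenbasis of $-\Delta_{g_0} + q_0$; this should follow from the construction of $\Lambda^T_{g_0,q_0}(\lambda)$ carried out in Section \ref{sec_bcmethod_second}.
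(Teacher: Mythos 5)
Your proposal is correct, and it takes a genuinely different route from the paper's. The paper reduces the theorem to its Proposition~\ref{lemma_dnmap_transversal_cylinder2}, which plugs the \emph{unfiltered} data $e^{ikt}\Psi_R(t)h$ into $\Lambda^T_{g_0,q_0}(\lambda)$ and then takes a Ces\`aro average over the cutoff parameter $R$; the averaging is needed precisely because the propagating modes with $\lambda_l < \lambda$ produce oscillatory boundary terms $e^{i(k\pm\sqrt{\lambda-\lambda_l})R}$ that do not converge as $R\to\infty$. You instead filter on the Fourier side: by taking $\widehat\chi$ compactly supported in $\{\xi^2>\lambda-\lambda_1\}$, the mode-by-mode formula $\widehat{\tilde u}(\xi,l)=\widehat{\tilde\eta}(\xi,l)/(\xi^2-(\lambda-\lambda_l)-i0)$ has no singular denominators on the support of $\widehat\chi$, so the outgoing solution is in fact rapidly decaying in $t$ and the identity $\widehat{\Lambda^T f}(\xi,\cdot)=\widehat\chi(\xi)\,\Lambda^{M_0}_{g_0,q_0}(\lambda-\xi^2)h$ holds classically; dividing by $\widehat\chi(\xi)$ then recovers the transversal DN map at every real $\mu<\lambda_1$. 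What your approach buys is a cleaner extraction with no limiting procedure; what the paper's approach buys is a formula valid at all $k$ (including the resonant regime $\lambda-k^2>\lambda_1$), which is overkill since both routes proceed by meromorphic continuation from a single real interval. One remark: your last step --- synthesizing $\Lambda^{Ell}_{g_0,q_0}(\mu_0)$ by inverse Fourier transform and invoking Theorem~\ref{thm_calderon_cylinder_reconstruction} --- is a harmless but unnecessary detour. Once $\Lambda^{M_0}_{g_0,q_0}(\mu)$ is known for all $\mu\in\mC\setminus\{\lambda_j\}$ you are already at step (4) of the introduction's reduction; you can pass directly to the hyperbolic DN map and the boundary control method (this is exactly how the proof of Theorem~\ref{thm_calderon_cylinder_reconstruction} itself terminates), rather than rebuilding $\Lambda^{Ell}$ and re-extracting the transversal maps from it.
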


As mentioned above, the proof involves a reduction from the elliptic DN map to the hyperbolic DN map and the boundary control method. We also use the elliptic DN map on the transversal manifold, defined for $\lambda$ outside $\text{Spec}(-\Delta_{g_0}+q_0)$ by 
$$
\Lambda_{g_0,q_0}^{Tr}(\lambda): v|_{\partial M_0} \mapsto \partial_{\nu} v|_{\partial M_0}, \quad (-\Delta_{g_0}+q_0-\lambda) v = 0 \text{ in } M_0.
$$
The argument proceeds roughly as follows:
\begin{enumerate}
\item 
Extend $\Lambda_{g_0,q_0}^{Ell}(\lambda)$ to act on weighted Sobolev spaces on $\partial T$.
\item 
If $k > 0$, obtain $\Lambda_{g_0,q_0}^{Tr}(\lambda-k^2)$ for any $h \in C^{\infty}(\partial M_0)$ via 
$$
\Lambda_{g_0,q_0}^{Tr}(\lambda-k^2) h = e^{-ikt} \Lambda_{g_0,q_0}^{Ell}(\lambda) (e^{ikt} h).
$$
\item 
Recover $\Lambda_{g_0,q_0}^{Tr}(\mu)$ for $\mu \in \mC$ from $\{ \Lambda_{g_0,q_0}^{Tr}(\lambda-k^2) \}_{k > 0}$ by meromorphic continuation.
\item 
Recover $\Lambda_{g_0,q_0}^{Hyp}$ from $\{ \Lambda_{g_0,q_0}^{Tr}(\mu) \}_{\mu \in \mC}$ by Laplace transform in time.
\item 
Use the boundary control method to determine $(M_0,g_0)$ up to isometry and $q_0$ from $\Lambda_{g_0,q_0}^{Hyp}$.
\end{enumerate}

It was proved in \cite{KKLM} that knowing the transversal DN maps $\{ \Lambda_{g_0,q_0}^{Tr}(\mu) \}_{\mu \in \mC}$ is equivalent to knowing the DN map for the following equations:
\begin{itemize}
\item 
Wave equation $(\partial_t^2 - \Delta_{g_0} + q_0) u = 0$ in $(0,\infty) \times M_0$,
\item 
Heat equation $(\partial_t - \Delta_{g_0} + q_0) u = 0$ in $(0,\infty) \times M_0$,
\item 
Schr\"odinger equation $(i\partial_t - \Delta_{g_0} + q_0) u = 0$ in $(0,\infty) \times M_0$.
\end{itemize}
Our results show that the elliptic equation $(-\partial_t^2 - \Delta_{g_0} + q_0) u = 0$ in $\mR \times M_0$ can be added to this list.

Note that Theorems \ref{thm_calderon_cylinder_reconstruction} and \ref{thm_calderon_cylinder_uniqueness} are valid for arbitrary transversal manifolds $M_0$ without any restriction on the geometry, and they allow to recover both the transversal metric and the potential from the elliptic DN map. They are also the first uniqueness results for the Calder\'on problem that we are aware of which employ control theory methods (in particular approximate controllability based on unique continuation for the wave equation). At the moment we can only show these results by going through the wave equation. It would be interesting to understand if there is a proof that would work with the elliptic equation directly.

However, there is a severe restriction: the potential $q_0$ has to be independent of the $t$ variable, unlike in Theorems \ref{claim1}--\ref{claim3} where the scalar coefficient may depend on the Euclidean variable. In fact, the analogue of Theorem \ref{thm_calderon_cylinder_uniqueness} on a fixed compact manifold $(M,g) \subset \subset (\mR \times M_0, e \oplus g_0)$ with two potentials independent of the $t$ variable can easily be reduced to standard boundary determination results \cite[Section 8]{DKSaU}. Of course, in the infinite cylinder $T$ boundary determination is not so helpful and we use a reduction to the wave equation instead.

The Wick rotation $t \mapsto it$ suggests that the potential $q_0$ should indeed be independent of the $t$ variable, or at least real analytic in $t$, for this reduction to the wave equation to work. The boundary control method for the wave equation also requires the coefficients to be independent of the time variable, although a variant of this method due to Eskin \cite{Eskin_BCanalytic} allows lower order coefficients that are real analytic in time. The Gel'fand problem for time-dependent coefficients is interesting in its own right; see \cite{RammSjostrand}, \cite{Salazar}, \cite{Stefanov} for some results when the background metric is Euclidean.

This paper is structured as follows. Section \ref{sec_intro} is the introduction, and Section \ref{sec_cgo} gives the construction of complex geometrical optics solutions based on quasimodes and proves Theorem \ref{claim3}. Section \ref{sec_quasimodes} contains a direct construction of Gaussian beam quasimodes and the proofs of Theorems \ref{claim1}, \ref{claim2} and \ref{claim4}. In Section \ref{sec_microlocal} we give an alternative construction of Gaussian beam quasimodes based on a microlocal reduction via Fourier integral operators. The Calder\'on problem in an infinite cylinder is considered in the last two sections. Section \ref{sec_bcmethod_first} discusses the case where the spectral parameter is outside the continuous spectrum and gives the proofs of Theorems \ref{thm_calderon_cylinder_reconstruction} and \ref{thm_calderon_cylinder_uniqueness}, and Section \ref{sec_bcmethod_second} extends these results to the case where the spectral parameter may be in the continuous spectrum but not in the set of thresholds.

\bigskip

\noindent {\bf Acknowledgements.}
Y.K.~is partly supported by EPSRC, M.L.~and M.S.~are supported in part by the \mbox{Academy} of Finland (Finnish Centre of Excellence in Inverse Problems Research), and M.S.~is also supported by an ERC Starting Grant. The authors would like to thank the organizers of the MSRI program on Inverse Problems and Applications in 2010, the Isaac Newton Institute program on Inverse Problems in 2011, the Fields Institute program on Geometry in Inverse Problems in 2012, and the Institut Mittag-Leffler program on Inverse Problems in 2013 where part of this work was carried out. D.DSF.~would like to acknowledge the hospitality of the University of Jyv\"askyl\"a.

\section{Complex geometrical optics} \label{sec_cgo}

In this section we explain the construction of complex geometrical optics solutions based on quasimodes in $(M_0,g_0)$ and use this construction to prove Theorem \ref{claim3}. The argument is close to \cite[Section 5]{DKSaU}.

We will assume that $(M,g)$ is CTA with $(M,g) \subset \subset (\mR \times M_0,g)$ where $g = c(e \oplus g_0)$, and $(M_0,g_0)$ is a compact $(n-1)$-dimensional manifold with boundary. Let also $q \in L^{\infty}(M)$. We first note the identity 
$$
c^{\frac{n+2}{4}}(-\Delta_g + q)(c^{-\frac{n-2}{4}} \widetilde{u}) = (-\Delta_{\widetilde{g}} + \widetilde{q})\widetilde{u}
$$
where 
$$
\widetilde{g} = e \oplus g_0, \quad \widetilde{q} = c(q - c^{\frac{n-2}{4}} \Delta_g (c^{-\frac{n-2}{4}})).
$$
This shows that it is enough to construct solutions to $(-\Delta_{\widetilde{g}} + \widetilde{q})\widetilde{u} = 0$.

Writing $x = (x_1,x')$ for coordinates in $\mR \times M_0$, the function $\varphi(x) = x_1$ is a limiting Carleman weight in a neighborhood of $M$ \cite{DKSaU}. In particular, we have the following solvability result which follows from \cite[Section 4]{DKSaU} (see also \cite[Section 4]{KSaU} where one obtains $H^2$ solutions).

\begin{prop} \label{prop_carleman_estimates}
Let $\widetilde{q} \in L^{\infty}(M)$. There exists $\tau_0 \geq 1$ such that whenever $\abs{\tau} \geq \tau_0$, then for any $f \in L^2(M)$ the equation 
$$
e^{\tau x_1} (-\Delta_{\widetilde{g}} + \widetilde{q}) e^{-\tau x_1} r = f \quad \text{in } M
$$
has a solution $r \in H^1(M)$ satisfying the estimates 
$$
\norm{r}_{H^{\alpha}(M)} \leq C \abs{\tau}^{\alpha-1} \norm{f}_{L^2(M)}, \quad 0 \leq \alpha \leq 1.
$$
\end{prop}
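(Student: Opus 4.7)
The plan is to reduce the proposition to a semiclassical Carleman estimate for the conjugated operator followed by a standard duality argument. Writing $\widetilde g = e \oplus g_0$ and $x = (x_1, x')$, the conjugated operator takes the explicit form
\[ P_\varphi := e^{\tau x_1}(-\Delta_{\widetilde g} + \widetilde q)e^{-\tau x_1} = -\partial_{x_1}^2 + 2\tau \partial_{x_1} - \tau^2 - \Delta_{g_0} + \widetilde q, \]
and I would first embed $M \subset\subset \widetilde M \subset\subset \mR \times M_0$ into a slightly larger compact manifold and extend $\widetilde q$ by zero to $L^\infty(\widetilde M)$, so that test functions may be taken compactly supported in the interior and no boundary terms appear.

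\textbf{Carleman estimate.} The core task is to establish the semiclassical estimate
\[ \abs{\tau}\norm{u}_{L^2(\widetilde M)} + \norm{u}_{H^1(\widetilde M)} \le C \norm{P_\varphi u}_{L^2(\widetilde M)}, \qquad u \in C_c^\infty(\widetilde M^{\text{int}}), \]
for $\abs{\tau} \ge \tau_0$. Splitting $P_\varphi$ into formally self-adjoint and skew-adjoint parts, one sees that the principal commutator vanishes because $\partial_{x_1}$ commutes with $-\partial_{x_1}^2 - \Delta_{g_0}$; this is precisely the statement that $\varphi = x_1$ is a \emph{limiting} Carleman weight, and it is the reason the naive H\"ormander positivity is degenerate. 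The standard remedy, used in Section 4 of \cite{DKSaU}, is the convexification $\varphi \mapsto \varphi + \tfrac{\epsilon}{2}\varphi^2$ for small $\epsilon > 0$, which restores strict pseudoconvexity and produces the $L^2$ estimate by an integration-by-parts argument. The zeroth-order perturbation $\widetilde q \in L^\infty$ is absorbed into the left-hand side for $\abs{\tau}$ large. The gain of one derivative in $H^1$ then follows by combining with the standard elliptic estimate coming from the real principal symbol $\xi_1^2 + \abs{\xi'}^2_{g_0}$.

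\textbf{Solvability by duality.} Given the Carleman estimate and the analogous bound for the formal adjoint $P_\varphi^*$ (which has the same structure with $\tau$ replaced by $-\tau$), the linear functional $L(P_\varphi^* v) = \br{f, v}_{L^2(\widetilde M)}$ is well defined on the subspace $\{P_\varphi^* v : v \in C_c^\infty(\widetilde M^{\text{int}})\} \subset L^2(\widetilde M)$ with norm $\le C \abs{\tau}^{-1} \norm{f}_{L^2}$. Hahn-Banach extension and Riesz representation then produce $r \in L^2(\widetilde M)$ solving $P_\varphi r = f$ weakly, with $\norm{r}_{L^2(\widetilde M)} \le C \abs{\tau}^{-1}\norm{f}_{L^2(\widetilde M)}$. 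Interior elliptic regularity on the smaller domain $M \subset\subset \widetilde M$, together with the $H^1$ version of the Carleman estimate applied after multiplication by a cutoff, upgrades this to $\norm{r}_{H^1(M)} \le C\norm{f}_{L^2(M)}$. Interpolation between $\alpha = 0$ and $\alpha = 1$ supplies the intermediate range of estimates.

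\textbf{Main obstacle.} The entire difficulty is concentrated in establishing the semiclassical Carleman estimate with gain: because $\varphi = x_1$ is only a \emph{limiting} Carleman weight, the natural commutator positivity is absent and must be engineered through convexification. Once this estimate is in hand, the passage to the solvability statement via adjoint duality, interior elliptic regularity, and interpolation is routine.
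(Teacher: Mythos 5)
The paper does not actually prove Proposition~\ref{prop_carleman_estimates} in the text: it cites Section~4 of \cite{DKSaU} (see also \cite{KSaU}). Your proposal reconstructs that argument, and the main scaffolding is correct and matches the cited approach: the conjugated form of $P_\varphi$, the observation that the leading commutator vanishes because $\varphi=x_1$ is only a limiting Carleman weight, the convexification $\varphi\mapsto\varphi+\tfrac{\epsilon}{2}\varphi^2$ to restore positivity, absorption of $\widetilde q\in L^\infty$ for $\abs{\tau}$ large, and the Hahn--Banach/Riesz duality producing an $L^2$ solution with $\norm{r}_{L^2}\lesssim\abs{\tau}^{-1}\norm{f}_{L^2}$.

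The step that does not work as written is the upgrade to $H^1$. You propose to apply the $H^1$ Carleman estimate to $\chi r$ after a cutoff and invoke interior elliptic regularity. But
$$[P_\varphi,\chi]\,u = -2\,\langle d\chi, du\rangle_{\widetilde g} - (\Delta_{\widetilde g}\chi)\,u + 2\tau(\partial_1\chi)\,u,$$
so the resulting bound $\norm{\chi r}_{H^1}\lesssim\norm{\chi f}_{L^2}+\norm{[P_\varphi,\chi]r}_{L^2}$ reintroduces $\norm{\nabla r}_{L^2(\supp\, d\chi)}$ on the right with a constant of order one; this is precisely the quantity you are trying to control and there is no small factor to absorb it. Also, interior elliptic regularity by itself gives $r\in H^2_{\mathrm{loc}}$ but with the wrong $\tau$-dependence, since the lower-order coefficients in $P_\varphi$ are of size $\tau$ and $\tau^2$. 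So this step is circular.

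Two standard ways to close the gap. (a) The route taken in \cite{DKSaU}: prove a shifted Carleman estimate of the form $\norm{u}_{H^1_{\mathrm{scl}}}\lesssim h^{-1}\norm{P_\varphi u}_{H^{-1}_{\mathrm{scl}}}$ by conjugating the estimate with $\langle hD\rangle$ and showing the commutator error is negligible; duality with the $H^{-1}_{\mathrm{scl}}$--$H^1_{\mathrm{scl}}$ pairing then gives $\norm{r}_{H^1_{\mathrm{scl}}}\lesssim h^{-1}\norm{f}_{L^2}$ directly, which is exactly the conjunction of the $\alpha=0$ and $\alpha=1$ bounds. (b) Keep your $L^2$ duality, then run a Caccioppoli-type energy estimate: multiply $P_\varphi r=f$ by $\chi^2\bar r$, integrate by parts, and take real parts. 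The cross term $2\re\int\chi\bar r\,\langle d\chi,dr\rangle$ is bounded by $\tfrac12\norm{\chi\,dr}^2_{L^2}+2\norm{(d\chi)r}^2_{L^2}$ and absorbs, while $\tau^2\norm{\chi r}^2_{L^2}$ and $\tau\int\chi\abs{\partial_1\chi}\abs{r}^2$ are handled by your already-proved $L^2$ bound. Either way one obtains $\norm{r}_{H^1(M)}\lesssim\norm{f}_{L^2}$, and interpolation between $\alpha=0$ and $\alpha=1$ completes the proof.
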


Consider complex frequencies 
$$
s = \tau + i\lambda, \quad \text{$\tau$ real with $\abs{\tau}$ large, $\lambda$ complex and fixed}.
$$
We are interested in finding complex geometrical optics solutions to the equation $(-\Delta_{\widetilde{g}} + \widetilde{q})u = 0$ in $M$, having the form 
$$
u = e^{-s x_1}(v+r).
$$
Here $v = v_s$ will be an amplitude type term, and $r = r_s$ is a correction term with $\norm{r}_{L^2(M)} \to 0$ as $\abs{\tau} \to \infty$. Further, we require certain asymptotic properties of $v_s$ as $\abs{\tau} \to \infty$ when $\lambda$ is kept fixed.

A function $u$ of the above type is a solution provided that 
$$
e^{\tau x_1}(-\Delta_{\widetilde{g}} + \widetilde{q}) e^{-\tau x_1}(e^{-i\lambda x_1} r) = f
$$
where the right hand side is 
$$
f = -e^{-i\lambda x_1} e^{s x_1}(-\Delta_{\widetilde{g}} + \widetilde{q})e^{-s x_1} v.
$$
The point is to choose $v$ so that $\norm{f}_{L^2(M)}$ does not grow when $\abs{\tau} \to \infty$, and to choose $r$ so that $e^{-i\lambda x_1} r$ is the solution given by Proposition \ref{prop_carleman_estimates}.

At this point we use the product structure on $(\mR \times M_0, \widetilde{g})$ where $\widetilde{g} = e \oplus g_0$, which implies that $\Delta_{\widetilde{g}} = \partial_1^2 + \Delta_{g_0}$. Consequently 
$$
e^{s x_1}(-\Delta_{\widetilde{g}} + \widetilde{q})e^{-s x_1} v = (-\partial_1^2 + 2s \partial_1 - s^2 - \Delta_{g_0} + \widetilde{q}) v.
$$
This expression simplifies if we choose $v$ independent of $x_1$, that is, $v = v(x')$, and in this case 
$$
f = -e^{i\lambda x_1}(-\Delta_{g_0} - s^2 + \widetilde{q})v.
$$
Now $\norm{f}_{L^2(M)}$ will not be too large with respect to $\abs{\tau}$ if $v = v_s(x')$ is a \emph{quasimode} or an \emph{approximate eigenfunction} in the transversal manifold $(M_0,g_0)$, in the sense that 
$$
\norm{(-\Delta_{g_0} - s^2) v_s}_{L^2(M_0)} = o(\abs{\tau}), \quad \norm{v_s}_{L^2(M_0)} = O(1)
$$
as $\abs{\tau} \to \infty$.

The following result describes the complex geometrical optics solutions.

\begin{prop} \label{prop_cgo_solutions}
Let $q \in L^{\infty}(M)$, let $\tau_0$ be sufficiently large, and let $\lambda$ be a fixed real number. Suppose that $\{v_s \,;\, s = \tau+i\lambda, \abs{\tau} \geq \tau_0 \}$ is a family of functions in $L^2(M_0)$ satisfying 
$$
\norm{(-\Delta_{g_0} - s^2) v_s}_{L^2(M_0)} = o(\abs{\tau}), \quad \norm{v_s}_{L^2(M_0)} = O(1)
$$
as $\abs{\tau} \to \infty$. Then for any $\tau$ with $\abs{\tau} \geq \tau_0$ there is a solution $u \in H^1(M)$ of $(-\Delta_g + q)u = 0$ in $M$ having the form 
$$
u = e^{-s x_1}c^{-\frac{n-2}{4}}(v_s + r_s)
$$
where $\norm{r_s}_{L^2(M)} = o(1)$ as $\abs{\tau} \to \infty$.
\end{prop}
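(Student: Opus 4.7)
The plan is to implement the standard complex geometrical optics template already sketched in the paragraphs preceding the statement. The starting point is the conformal scaling identity recalled at the beginning of the section, which reduces the problem to constructing a solution $\widetilde{u} = e^{-sx_1}(v_s + r_s) \in H^1(M)$ of $(-\Delta_{\widetilde{g}} + \widetilde{q})\widetilde{u} = 0$ for the simpler product metric $\widetilde{g} = e \oplus g_0$ and the modified $L^{\infty}$ potential $\widetilde{q} = c(q - c^{\frac{n-2}{4}} \Delta_g(c^{-\frac{n-2}{4}}))$.

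Next, I would derive the equation satisfied by $r_s$. Since $\Delta_{\widetilde{g}} = \partial_1^2 + \Delta_{g_0}$ and the quasimode $v_s$ depends only on the transversal variable $x'$, a direct computation gives $e^{sx_1}(-\Delta_{\widetilde{g}} + \widetilde{q}) e^{-sx_1} v_s = (-\Delta_{g_0} - s^2 + \widetilde{q}) v_s$. Therefore $\widetilde{u}$ solves the original equation if and only if
\begin{equation*}
e^{sx_1}(-\Delta_{\widetilde{g}} + \widetilde{q}) e^{-sx_1} r_s = -(-\Delta_{g_0} - s^2 + \widetilde{q}) v_s =: F_0,
\end{equation*}
viewed as an equation on $M$ (with $v_s$ extended constantly in the $x_1$-direction).

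The only available solvability result, Proposition \ref{prop_carleman_estimates}, requires a \emph{real} Carleman weight, so I would absorb the imaginary part of $s$ into the unknown. Writing $s = \tau + i\lambda$ with $\lambda \in \mR$ and setting $R := e^{-i\lambda x_1} r_s$, the factorisation $e^{sx_1} = e^{i\lambda x_1} e^{\tau x_1}$ together with the fact that $e^{i\lambda x_1}$ commutes with $\Delta_{\widetilde{g}}$ up to a smooth multiplier recasts the equation as
\begin{equation*}
e^{\tau x_1}(-\Delta_{\widetilde{g}} + \widetilde{q}) e^{-\tau x_1} R = e^{-i\lambda x_1} F_0.
\end{equation*}
Since $c$ is smooth and bounded below, $\widetilde{q} \in L^{\infty}(M)$, so Proposition \ref{prop_carleman_estimates} applies for $|\tau| \geq \tau_0$ and produces a solution $R \in H^1(M)$ satisfying $\norm{R}_{L^2(M)} \leq C|\tau|^{-1} \norm{e^{-i\lambda x_1} F_0}_{L^2(M)}$.

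Finally, I would bound the right-hand side. Because $M$ is compactly contained in $\mR \times M_0$, the coordinate $x_1$ is bounded on $M$, and for real $\lambda$ one has $|e^{-i\lambda x_1}| = 1$; using that $v_s$ is independent of $x_1$ yields
\begin{equation*}
\norm{e^{-i\lambda x_1} F_0}_{L^2(M)} \leq C\bigl(\norm{(-\Delta_{g_0} - s^2)v_s}_{L^2(M_0)} + \norm{\widetilde{q}}_{L^{\infty}} \norm{v_s}_{L^2(M_0)}\bigr) = o(|\tau|)
\end{equation*}
by the quasimode hypothesis, whence $\norm{r_s}_{L^2(M)} = \norm{R}_{L^2(M)} = o(1)$ as $|\tau| \to \infty$. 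Setting $u = e^{-sx_1} c^{-\frac{n-2}{4}}(v_s + r_s)$ concludes the construction. The main, albeit mild, obstacle is precisely this handling of the imaginary part of the complex frequency: one must strip $e^{i\lambda x_1}$ off before invoking Proposition \ref{prop_carleman_estimates}, and it is essential that $\lambda$ be real, so that the oscillating factor has modulus one and does not spoil the $o(|\tau|)$ estimate on the source term.
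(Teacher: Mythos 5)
Your proof is correct and follows essentially the same route as the paper: the paper's own proof is a one-line pointer to the preceding discussion in Section \ref{sec_cgo}, and you have simply carried that discussion out in full—conformal reduction to $(-\Delta_{\widetilde g}+\widetilde q)\widetilde u=0$, choice of $v_s$ independent of $x_1$, stripping off the unimodular factor $e^{i\lambda x_1}$ so that Proposition \ref{prop_carleman_estimates} (with the real weight $\tau x_1$) applies, and the resulting $O(|\tau|^{-1}\cdot o(|\tau|))=o(1)$ bound on $r_s$. The only cosmetic note is that the paper writes $f=-e^{i\lambda x_1}(-\Delta_{g_0}-s^2+\widetilde q)v$ while your computation (correctly) yields the conjugate exponential $-e^{-i\lambda x_1}$; since $\lambda$ is real both have modulus one, so the discrepancy is immaterial.
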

\begin{proof}
We first produce a solution of the equation $(-\Delta_{\widetilde{g}} + \widetilde{q})\widetilde{u} = 0$ having the form $\widetilde{u} = e^{-s x_1}(v_s+r_s)$ as in the preceding discussion, and then define $u = c^{-\frac{n-2}{4}} \widetilde{u}$ to obtain a corresponding solution of $(-\Delta_g+q)u = 0$.
\end{proof}

The next result is slightly more general than Theorem \ref{claim3}.

\begin{prop} \label{prop_cgo_density_general}
Let $(M,g) \subset \subset (\mR \times M_0,g)$ be a CTA manifold, where $g = c(e \oplus g_0)$, and let $q_1, q_2 \in C(M)$. Let $\lambda_1, \lambda_2 \in \mC$, let $(\tau_j)_{j=1}^{\infty}$ be sequence of positive numbers with $\tau_j \to \infty$, and let 
$$
s_j = \tau_j + i\lambda_1, \quad t_j = \tau_j + i\bar{\lambda}_2.
$$
Suppose that $(v_{s_j}), (w_{t_j}) \subset C^2(M_0)$ are sequences satisfying 
\begin{gather*}
\norm{(-\Delta_{g_0} - s_j^2)v_{s_j}}_{L^2(M_0)} = o(\tau_j), \quad \norm{v_{s_j}}_{L^2(M_0)} = O(1), \\
\norm{(-\Delta_{g_0} - t_j^2)w_{t_j}}_{L^2(M_0)} = o(\tau_j), \quad \norm{w_{t_j}}_{L^2(M_0)} = O(1)
\end{gather*}
as $j \to \infty$, and in the weak topology of measures on $M_0$, 
$$
\lim_{j \to \infty} v_{s_j} \overline{w_{t_j}} \,dV_{g_0} =  \mu_{\lambda_1,\lambda_2}
$$
for some bounded measure $\mu_{\lambda_1,\lambda_2}$ on $M_0$. If 
$$
C_{g,q_1} = C_{g,q_2},
$$
then 
$$
\int_{M_0} \left[ \int_{-\infty}^{\infty} e^{-i(\lambda_1+\lambda_2) x_1} (c(q_1-q_2))(x_1,x') \,dx_1 \right] d\mu_{\lambda_1,\lambda_2}(x') = 0.
$$
Here $q_1-q_2$ is extended by zero to $\mR \times M_0$.
\end{prop}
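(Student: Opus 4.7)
The plan is to combine Proposition \ref{prop_cgo_solutions} with the standard integration-by-parts identity arising from equality of Cauchy data. That identity states that whenever $C_{g,q_1}=C_{g,q_2}$, one has
\begin{equation*}
\int_M(q_1-q_2)u_1u_2\,dV_g=0
\end{equation*}
for any $u_1,u_2\in H^1(M)$ solving $(-\Delta_g+q_k)u_k=0$ in $M$ for $k=1,2$. It follows from Green's formula after replacing $u_1$ by the $q_2$-solution that shares its Cauchy data. The idea is to plug in a pair of complex geometrical optics solutions whose weights are tuned so that their product carries exactly the oscillation $e^{-i(\lambda_1+\lambda_2)x_1}$ appearing in the target identity.

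Concretely, I would take $u_1=e^{-s_jx_1}c^{-(n-2)/4}(v_{s_j}+r_j^{(1)})$ solving $(-\Delta_g+q_1)u_1=0$, and $u_2=e^{\bar t_jx_1}c^{-(n-2)/4}(\overline{w_{t_j}}+r_j^{(2)})$ solving $(-\Delta_g+q_2)u_2=0$. The second choice works because the complex conjugation in the hypothesis $t_j=\tau_j+i\bar\lambda_2$ is designed to make $\overline{w_{t_j}}$ a quasimode at the frequency $\bar t_j=\tau_j-i\lambda_2$: indeed $(-\Delta_{g_0}-\bar t_j^2)\overline{w_{t_j}}=\overline{(-\Delta_{g_0}-t_j^2)w_{t_j}}=o(\tau_j)$. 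An application of Proposition \ref{prop_cgo_solutions}, whose argument goes through unchanged for complex $\lambda$ since $M$ is bounded (so $e^{i\lambda x_1}$ merely contributes a bounded multiplicative factor), supplies these solutions with correction terms $\|r_j^{(k)}\|_{L^2(M)}=o(1)$.

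Substituting into the orthogonality identity, using $-s_j+\bar t_j=-i(\lambda_1+\lambda_2)$ together with $dV_g=c^{n/2}\,dx_1\,dV_{g_0}$ (which combines with the prefactor $c^{-(n-2)/2}$ from the two CGOs to give $c$), one obtains
\begin{equation*}
0=\int_{M_0}\!\!\int_{-\infty}^{\infty}c(q_1-q_2)\,e^{-i(\lambda_1+\lambda_2)x_1}\bigl(v_{s_j}+r_j^{(1)}\bigr)\bigl(\overline{w_{t_j}}+r_j^{(2)}\bigr)\,dx_1\,dV_{g_0}.
\end{equation*}
The three cross terms containing some $r_j^{(k)}$ are $o(1)$ by Cauchy--Schwarz, since $c(q_1-q_2)$ is bounded and the $L^2(M)$ norms of $v_{s_j}$, $w_{t_j}$ are $O(1)$ (using that $M_0$ is compact and $M$ has bounded extent in $x_1$). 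For the leading term I would perform the $x_1$ integration first, introducing the continuous function
\begin{equation*}
\phi(x')=\int_{-\infty}^{\infty}c(x_1,x')(q_1-q_2)(x_1,x')\,e^{-i(\lambda_1+\lambda_2)x_1}\,dx_1
\end{equation*}
on $M_0$, and then invoke the assumed weak convergence $v_{s_j}\overline{w_{t_j}}\,dV_{g_0}\rightharpoonup\mu_{\lambda_1,\lambda_2}$ tested against $\phi\in C(M_0)$ to reach the stated conclusion.

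The only genuinely delicate point I anticipate is confirming that the passage from quasimode to CGO solution in Proposition \ref{prop_cgo_solutions} survives for the complex parameter $\lambda\in\mathbb{C}$ rather than real: one must verify that after conjugating by $e^{-\tau x_1}$, the source term $f=-e^{i\lambda x_1}(-\Delta_{g_0}-s^2+\widetilde q)v$ still satisfies $\|f\|_{L^2(M)}=o(\tau_j)$, so that Proposition \ref{prop_carleman_estimates} produces $r$ with $\|r\|_{L^2}=o(1)$. Once this is checked, everything else is bookkeeping that exploits precisely the fact that $v_{s_j}\overline{w_{t_j}}$ need only converge weakly as measures rather than pointwise or in $L^1$, which is the flexibility the statement is designed to capture.
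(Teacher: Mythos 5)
Your proposal follows essentially the same route as the paper's own proof: construct CGO solutions via Proposition~\ref{prop_cgo_solutions}, invoke the integration-by-parts orthogonality identity implied by $C_{g,q_1}=C_{g,q_2}$, substitute the CGO forms, kill the cross terms with the correction estimates, and pass to the limit using the assumed weak convergence of measures. The only bookkeeping difference is that the paper constructs $u_{t_j}$ as a solution of $(-\Delta_g+\overline{q_2})u_{t_j}=0$ with quasimode $w_{t_j}$ and then uses $\overline{u_{t_j}}$ (which solves the $q_2$ equation), whereas you directly build $u_2$ solving $(-\Delta_g+q_2)u_2=0$ from the quasimode $\overline{w_{t_j}}$ at frequency $\bar t_j$; these are equivalent since $\Delta_{g_0}$ has real coefficients, and your observation about the purpose of the $\bar\lambda_2$ in $t_j$ is exactly right. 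The point you flag about Proposition~\ref{prop_cgo_solutions} being stated for real $\lambda$ is also genuine and your resolution (boundedness of $e^{i\lambda x_1}$ on the compact set $M$) is the correct one, matching what the paper uses implicitly.

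There is, however, one real gap: you assert without justification that
\[
\phi(x')=\int_{-\infty}^{\infty}c(x_1,x')(q_1-q_2)(x_1,x')\,e^{-i(\lambda_1+\lambda_2)x_1}\,dx_1
\]
is a continuous function on $M_0$, and you need this to test the weak convergence $v_{s_j}\overline{w_{t_j}}\,dV_{g_0}\rightharpoonup\mu_{\lambda_1,\lambda_2}$ against $\phi$. But $q_1-q_2$ extended by zero from $M$ to $\mR\times M_0$ is a priori only a bounded, possibly discontinuous function, since $q_1-q_2$ could be nonzero on $\partial M$. The paper closes this gap explicitly: the hypothesis $C_{g,q_1}=C_{g,q_2}$ implies, via a boundary determination result (see \cite[Proposition A.1]{GT} and \cite[Section 8]{DKSaU}), that $q_1|_{\partial M}=q_2|_{\partial M}$, so the zero extension of $q_1-q_2$ is in fact a continuous compactly supported function on $\mR\times M_0$, and then $\phi\in C(M_0)$ follows immediately. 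You should include this step; without it, the final passage to the limit against $\mu_{\lambda_1,\lambda_2}$ is not justified.
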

\begin{proof}
We use Proposition \ref{prop_cgo_solutions} to find solutions of $(-\Delta_g+q_1)u_{s_j} = 0$ and $(-\Delta_g + \overline{q_2}) u_{t_j} = 0$, of the form 
\begin{gather*}
u_{s_j} = e^{-s_j x_1} c^{-\frac{n-2}{4}} (v_{s_j} + r_{s_j}), \\
u_{t_j} = e^{t_j x_1} c^{-\frac{n-2}{4}} (w_{t_j} + r_{t_j})
\end{gather*}
where $\norm{r_{s_j}}_{L^2(M)}, \norm{r_{t_j}}_{L^2(M)} = o(1)$ as $j \to \infty$. Note that $\bar{u}_{t_j}$ solves the equation $(-\Delta+q_2) \bar{u}_{t_j} = 0$ in $M$.

Next follows the usual integration by parts: we have 
\begin{align*}
\int_M (q_1-q_2) u_{s_j} \bar{u}_{t_j} \,dV &= \int_M \left[ (\Delta_g u_{s_j}) \bar{u}_{t_j} - u_{s_j} (\Delta_g \bar{u}_{t_j}) \right] \,dV \\
 &= \int_{\partial M} \left[ (\partial_{\nu} u_{s_j}) \bar{u}_{t_j} - u_{s_j} (\partial_{\nu} \bar{u}_{t_j})\right] \,dS
\end{align*}
where the normal derivatives of the $H^1$ solutions are interpreted in the weak sense as elements in $H^{-1/2}(\partial M)$. Using the condition $C_{g,q_1} = C_{g,q_2}$, there is some $\widetilde{u} \in H^1(M)$ with $(-\Delta_g+q_2)\widetilde{u} = 0$ in $M$ and 
$$
\widetilde{u}|_{\partial M} = u_{s_j}|_{\partial M}, \quad \partial_{\nu} \widetilde{u}|_{\partial M} = \partial_{\nu} u_{s_j}|_{\partial M}.
$$
This shows that 
\begin{align*}
\int_M (q_1-q_2) u_{s_j} \bar{u}_{t_j} \,dV &= \int_{\partial M} \left[ (\partial_{\nu} \widetilde{u}) \bar{u}_{t_j} - \widetilde{u} (\partial_{\nu} \bar{u}_{t_j}) \right] \,dS \\
 &= \int_M \left[ (\Delta_g \widetilde{u}) \bar{u}_{t_j} - \widetilde{u} (\Delta_g \bar{u}_{t_j}) \right] \,dV \\
 &= \int_M (q_2 - q_2) \widetilde{u} \bar{u}_{t_j} \,dV = 0.
\end{align*}

Substituting the forms of the solutions $u_{s_j}$ and $\bar{u}_{t_j}$ in the last identity, we see that 
$$
\int_M (q_1-q_2) e^{-i(\lambda_1+\lambda_2) x_1} c^{-\frac{n-2}{2}} v_{s_j} \overline{w_{t_j}} \,dV = o(1) \quad \text{as $j \to \infty$},
$$
using the norm estimate for the correction terms $r_{s_j}$ and $r_{t_j}$ and the $L^2$ estimates for $v_{s_j}$ and $w_{t_j}$. We now extend $q_1-q_2$ by zero to $\mR \times M_0$ and note that $dV_g(x) = c^{n/2} \,dx_1 \,dV_{g_0}(x')$. Then, taking the limit as $j \to \infty$ and using the assumption that $v_{s_j} \overline{w_{t_j}}$ converges in the weak topology of measures, we obtain 
$$
\int_{M_0} \left[ \int_{-\infty}^{\infty} e^{-i(\lambda_1+\lambda_2) x_1} (c(q_1-q_2))(x_1,x') \,dx_1 \right] d\mu_{\lambda_1,\lambda_2}(x') = 0.
$$
To be precise, we would like that the expression in brackets is a continuous function with respect to $x'$ in $M_0$ in order to take the limit. However, the condition $C_{g,q_1} = C_{g,q_2}$ implies by boundary determination that $q_1|_{\partial M} = q_2|_{\partial M}$, and thus the zero extension of $q_1-q_2$ is in fact a continuous compactly supported function in $\mR \times M_0$. The boundary determination result is essentially contained in \cite[Proposition A.1]{GT} for the case $n=2$, and a similar argument works also for $n \geq 3$ (see \cite[Section 8]{DKSaU} for the case of DN maps with smooth $q_1$ and $q_2$).
\end{proof}

\begin{proof}[Proof of Theorem \ref{claim3}]
This follows from Proposition \ref{prop_cgo_density_general} by taking $\lambda_1 = \lambda_2$ real and taking $v_{s_j} = w_{s_j}$.
\end{proof}

At this point it is useful to compare the solutions in Proposition \ref{prop_cgo_solutions} to the ones appearing in \cite[Section 5]{DKSaU}, where the additional assumption that $(M_0,g_0)$ is simple was imposed. The complex geometrical optics solutions in \cite{DKSaU}, satisfying $(-\Delta_g + q)u = 0$ in $M$, have the form 
$$
u = e^{-\tau x_1} (e^{-i\tau \psi} a + r).
$$
Here $\psi$ is a real function chosen as a solution of an eikonal equation, and the amplitude $a$ solves a complex transport equation in $M$. Since $(M_0,g_0)$ is simple these equations can be solved globally in $M$, and in fact $\psi$ only depends on $x'$. Then $e^{-i\tau \psi} a$ satisfies 
$$
e^{\tau x_1}(-\Delta_g)e^{-\tau x_1}(e^{-i\tau \psi} a) = O_{L^2(M)}(1)
$$
as $\tau \to \infty$. If $a$ would be independent of $x_1$, then $e^{-i\tau \psi(x')} a(x')$ would be an approximate eigenfunction in $M_0$ in the sense that 
$$
(-\Delta_{g_0} - \tau^2)(e^{-i\tau \psi} a) = O_{L^2(M_0)}(1).
$$
However, such functions are not quite sufficient to prove uniqueness results for the inverse problem. In \cite{DKSaU} one instead employed amplitudes of the form $a(x_1,x') = e^{-i\lambda x_1} \widetilde{a}(x')$ which allow to exploit the Fourier transform in $x_1$.

There are two differences between Proposition \ref{prop_cgo_solutions} and the construction in \cite{DKSaU}, although the two are very closely related. The first one is that we use large complex frequencies $s = \tau + i\lambda$ instead of large real frequencies $\tau$, which amounts to incorporating the factor $e^{-i\lambda x_1}$ from the amplitude $a$ as part the complex frequency (thus making it possible to use the Fourier transform in $x_1$). The second difference is roughly that instead of using approximate eigenfunctions $e^{-i\tau \psi(x')} a(x')$ with real frequency, we consider more general approximate eigenfunctions $v_s(x')$ with slightly complex frequency. This approach loses some generality since $v_s$ is not allowed to depend on $x_1$, but has the benefit that one can use much more general approximate eigenfunctions $v_s(x')$ than those of the form $e^{-i\tau \psi(x')} a(x')$ obtained from a global WKB construction on $M_0$.

%To end this section, we will show that the semiclassical limit measures arising in Theorem \ref{claim3} are invariant under the geodesic flow in a suitable sense. To do so, we will lift the measures associated with the quasimodes $v_{\lambda+i\tau}$ to the cotangent bundle $T^*M_0$ and introduce the corresponding semiclassical measures. The following results are classical (see for instance \cite{Bu,Zworski}) but for the sake of completeness we provide the proofs of these results.

%We end this section by noting that in Section 5 we will show that the semiclassical limit measures arising in Theorem \ref{claim3} are invariant under the geodesic flow in a suitable sense. To do so, we will lift the measures associated with the quasimodes $v_{\tau+i\lambda}$ to the cotangent bundle $T^*M_0$ and introduce the corresponding semiclassical measures.

\section{Gaussian beam quasimodes} \label{sec_quasimodes}

We will now give the Gaussian beam construction of approximate eigenfunctions, or quasimodes, with desirable concentration properties. In fact, these quasimodes will concentrate near a geodesic in the high frequency limit. On a compact manifold without boundary, it is well known that one can find quasimodes concentrating near a stable closed geodesic for large real frequencies. We refer to \cite[Section 10]{Z_survey1} and the references therein.

The setup here is more flexible since there are no boundary conditions or global conditions on a closed manifold required of the family $\{v_s\}$. Therefore, a construction of local nature is sufficient. We will give a direct argument analogous to the construction of Gaussian beams, which are approximate solutions of the wave equation localized near a geodesic \cite{KKL}. The fact that we need approximate eigenfunctions with slightly complex frequencies instead of real ones will not present any complications. A version of this construction that also takes into account possible reflections is given in \cite{KSa}.

For most of this section we will write $(M,g)$ for the transversal manifold instead of $(M_0,g_0)$ in order to simplify notation. Let $(M,g)$ be an $m$-dimensional compact oriented manifold with smooth boundary (thus $m=n-1 \geq 2$). Recall that a unit speed geodesic $\gamma: [0,L] \to M$ is called nontangential if $\dot{\gamma}(0), \dot{\gamma}(L)$ are nontangential vectors on $\partial M$ and $\gamma(t) \in M^{\text{int}}$ for $0 < t < L$. Theorem \ref{claim4} is the following statement.

\begin{prop} \label{prop_gaussianbeam_quasimode}
Let $\gamma: [0,L] \to M$ be a nontangential geodesic, and let $\lambda \in \mR$. For any $K > 0$ there is a family of functions $(v_s) \subset C^{\infty}(M)$, where $s = \tau+i\lambda$ and $\tau \geq 1$, such that 
$$
\norm{(-\Delta_{g} - s^2) v_s}_{L^2(M)} = O(\tau^{-K}), \quad \norm{v_s}_{L^2(M_0)} = O(1)
$$
as $\tau \to \infty$â and for any $\psi \in C(M)$ one has 
$$
\lim_{\tau \to \infty} \int_{M} \abs{v_s}^2 \psi \,dV_{g} = \int_0^L e^{-2\lambda t} \psi(\gamma(t)) \,dt.
$$
\end{prop}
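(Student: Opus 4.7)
The plan is to construct $v_s$ as a Gaussian beam concentrated on $\gamma$. Since $\gamma$ is nontangential, I can slightly enlarge $(M,g)$ to an open manifold $(\widetilde{M},g)$ in which $\gamma$ extends past both endpoints and admits an open tubular neighborhood $U$. On $U$ I pick Fermi coordinates $(t,y) \in (-\eps,L+\eps) \times B(0,\delta_0)$ in which $\gamma$ corresponds to $\{y=0\}$ and the metric satisfies $g_{tt}(t,0)=1$, $g_{tj}(t,0)=0$, $g_{jk}(t,0)=\delta_{jk}$, and $\partial_{y_l} g_{ab}(t,0)=0$. With a cutoff $\chi \in C^\infty_c(\mR^{m-1})$ equal to $1$ near $0$ and supported in $\{\abs{y}<\delta\}$, the ansatz is
\begin{equation*}
v_s(t,y) = \tau^{(m-1)/4}\, \chi(y)\, e^{is\Theta(t,y)}\, a(t,y,s),
\end{equation*}
extended by zero off the tube, with polynomial phase $\Theta(t,y) = t + \tfrac{1}{2} H(t) y \cdot y + \sum_{k\geq 3} p_k(t,y)$ and amplitude expansion $a = \sum_{j=0}^{N} \tau^{-j} a_j(t,y)$. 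The prefactor $\tau^{(m-1)/4}$ is the correct Laplace-method normalization to give $\norm{v_s}_{L^2(M)} = O(1)$ once $\mathrm{Im}(H(t))$ is positive definite.

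Next, solve the eikonal equation $\abs{\nabla\Theta}_g^2 = 1$ to high order along $\gamma$. It holds exactly at $y=0$ since $\partial_t\Theta|_\gamma = 1$ and $\partial_y\Theta|_\gamma = 0$. Matching the quadratic coefficient in $y$ yields a matrix Riccati equation $\dot H + H^2 + B(t) = 0$ for the complex symmetric matrix $H(t)$, where $B(t)$ is built from the curvature transverse to $\gamma$. The substitution $H = \dot Y Y^{-1}$ converts this into the linear Jacobi-type system $\ddot Y + B Y = 0$, and the choice $Y(0)=I$, $\dot Y(0) = iI$ together with the standard symplectic/Wronskian argument shows that $Y(t)$ remains invertible and $\mathrm{Im}(H(t))$ stays positive definite on all of $[0,L]$. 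Higher polynomial pieces $p_k$ are then obtained by solving, order by order, linear ODEs along $\gamma$ for their symmetric tensor coefficients. Substituting the full expansion into $(-\Delta_g - s^2)(e^{is\Theta}a)$ and expanding in powers of $\tau$, the amplitudes $a_j$ satisfy transport equations of the form $(\partial_t + \text{lower})a_j = F_j(a_0,\ldots,a_{j-1})$, solvable inductively by ODE; the free data at each stage allow me to calibrate $a_0(t,0)$ for the concentration formula below.

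Finally, verify the two estimates. The cutoff $\chi$ is the only source of non-polynomial error, and the estimate $\mathrm{Im}(\Theta) \geq c \abs{y}^2$ on $\mathrm{supp}\,\chi$ gives $\abs{e^{is\Theta}} \leq e^{-c\tau\abs{y}^2}$, so all contributions from $[\Delta_g,\chi]$ are $O(e^{-c\tau})$; choosing $N = N(K)$ sufficiently large then produces $\norm{(-\Delta_g - s^2)v_s}_{L^2(M)} = O(\tau^{-K})$. For the weak limit, one has
\begin{equation*}
\abs{v_s(t,y)}^2 = \tau^{(m-1)/2}\, \chi(y)^2 \abs{a}^2\, e^{-2\tau\,\mathrm{Im}(\Theta)}\, e^{-2\lambda\,\mathrm{Re}(\Theta)},
\end{equation*}
with $\mathrm{Re}(\Theta)(t,0)=t$ and $\mathrm{Im}(\Theta) = \tfrac{1}{2}\mathrm{Im}(H(t))\,y\cdot y + O(\abs{y}^3)$. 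Rescaling $y = \tau^{-1/2} z$ and applying Laplace's method in the transverse variables (using $dV_g = (1+O(\abs{y}^2))\,dt\,dy$) yields, for any $\psi \in C(M)$,
\begin{equation*}
\int_M \abs{v_s}^2 \psi\, dV_g \;\longrightarrow\; \int_0^L \frac{\pi^{(m-1)/2}\abs{a_0(t,0)}^2}{\sqrt{\det \mathrm{Im}(H(t))}}\, e^{-2\lambda t}\, \psi(\gamma(t))\, dt.
\end{equation*}
Choosing $a_0(t,0) = \pi^{-(m-1)/4} \det(\mathrm{Im}\,H(t))^{1/4}$ produces exactly $\int_0^L e^{-2\lambda t}\psi(\gamma(t))\,dt$. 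The main obstacle is the global solvability of the Riccati equation with $\mathrm{Im}(H(t))$ positive definite on the whole of $[0,L]$, which is the standard Jacobi-field/symplectic argument; the fact that the frequency carries a fixed imaginary part $\lambda$ merely introduces the harmless $e^{-2\lambda\mathrm{Re}(\Theta)}$ factor, whose restriction to $\gamma$ is precisely $e^{-2\lambda t}$. Everything else is routine Gaussian beam bookkeeping along the lines of \cite{KKL}, \cite{Ralston}.
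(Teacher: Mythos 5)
Your construction is sound for an \emph{embedded} geodesic, and your treatment of the Riccati equation, transport equations, cutoff errors, and the Laplace-method rescaling $y = \tau^{-1/2}z$ all match the paper's proof. But there is a genuine gap: the proposition allows an arbitrary nontangential geodesic, and on a non-simple transversal manifold such a geodesic can self-intersect (indeed, handling non-simple $M_0$ is the whole point of this proposition). A self-intersecting curve does not admit a tubular neighborhood, so your opening step --- ``extend $\gamma$ and pick Fermi coordinates $(t,y)$ on an open tubular neighborhood $U$'' --- fails as stated. The Fermi-coordinate map $F(t,y) = \exp_{\gamma(t)}(y^\alpha E_\alpha(t))$ has invertible differential along $\{y=0\}$, but it is not globally injective; near a self-intersection point $p = \gamma(t_l) = \gamma(t_{l'})$ with $l \neq l'$ there are two distinct branches of the beam and the single-chart ansatz $v_s = \tau^{(m-1)/4}\chi(y)e^{is\Theta}a$ is not well-defined.

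The paper resolves this by (i) noting that a nontangential geodesic segment self-intersects at only finitely many times, (ii) covering $\gamma([-\eps,L+\eps])$ by finitely many overlapping Fermi coordinate patches on which the construction is carried out separately with matching initial data, (iii) gluing the local beams with a partition of unity in the $t$-variable, and (iv) --- crucially --- showing that near a self-intersection point the limit $\int_M \abs{v_s}^2\psi\,dV_g$ involves cross terms $\int v_s^{(l)}\overline{v_s^{(l')}}\psi\,dV$ that must be shown to vanish; this uses that the two branches have transversal, hence distinct, real phase gradients at $p$ and a non-stationary phase argument (with a boundary-term estimate for $\psi$ supported up to $\partial M$). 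Your proposal omits all of this. Either restrict to injective $\gamma$ and explain why that suffices (it does not, for the intended application), or add the covering/gluing step and the non-stationary-phase estimate killing the cross terms.
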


In the case where $(M,g)$ is simple, the method in \cite{DKSaU} (although it was not written exactly in this way) reduces to using approximate eigenfunctions of the above type to recover attenuated geodesic ray transforms of desired quantities. In fact, a version of Proposition \ref{prop_gaussianbeam_quasimode} on simple manifolds follows easily from the methods in \cite{DKSaU}.

\begin{prop}
Let $(M,g)$ be simple, let $\lambda$ be a fixed real number, and let $\gamma:[0,L] \to M$ be a nontangential geodesic. For any $0 < \alpha < 1$ there is a family $\{v_s \,;\,  s=\tau+i\lambda, \ \tau \geq 1\}$ in $C^{\infty}(M)$ such that 
$$
\norm{(-\Delta_{g} - s^2) v_s}_{L^2(M)} = O(\tau^{\alpha}), \quad \norm{v_s}_{L^2(M)} = O(1)
$$
as $\tau \to \infty$, and for any $\psi \in C(M)$ 
$$
\int_{M} \abs{v_{\tau+i\lambda}}^2 \psi \,dV_{g} \to \int_0^L e^{-2\lambda t} \psi(\gamma(t)) \,dt \quad \text{as $\tau \to \infty$}.
$$
\end{prop}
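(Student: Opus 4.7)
My plan is to adapt the real-phase complex geometrical optics construction of \cite{DKSaU} to the slightly complex frequency $s=\tau+i\lambda$, exploiting simplicity to obtain global polar normal coordinates. Embed $(M,g)$ in a slightly larger simple manifold $(\hat M,g)$ and extend the nontangential geodesic $\gamma$ backwards to $\hat\gamma:[-r_0,L]\to\hat M$ for some $r_0>0$; put $p=\hat\gamma(-r_0)\in\hat M\setminus M$. Simplicity of $\hat M$ guarantees that $\exp_p$ is a diffeomorphism onto its image, so polar normal coordinates $(r,\theta)\in(0,\infty)\times S_p$ are smooth on $\hat M\setminus\{p\}$ and cover $M$. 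In these coordinates $g_{rr}=1$, $g_{r\theta}=0$, whence $|\nabla r|_g^2=1$ and $\Delta_g r=\tfrac12\partial_r\log|g|$; the geodesic $\gamma$ is the radial ray $\theta\equiv\theta_0$, $r\in[r_0,r_0+L]$.

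Seek $v_s$ in the WKB form $v_s=e^{isr}a(r,\theta)$. A direct computation using $|\nabla r|_g^2=1$ gives
$$
(-\Delta_g-s^2)(e^{isr}a)=-e^{isr}\bigl(2is\,\partial_r a+is(\Delta_g r)\,a+\Delta_g a\bigr),
$$
and the choice $a(r,\theta)=|g(r,\theta)|^{-1/4}b(\theta)$ kills the $O(s)$ transport term exactly, for any $b$. Fix $\phi\in C_c^\infty(\mR^{m-1})$ with $\|\phi\|_{L^2}=1$, pick local coordinates on $S_p$ near $\theta_0$, and a cutoff $\chi_0\in C_c^\infty(S_p)$ equal to one at $\theta_0$ with sufficiently small support. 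Set
$$
b(\theta)=e^{\lambda r_0}\,\tau^{(m-1)\beta/2}\chi_0(\theta)\,\phi\bigl((\theta-\theta_0)\tau^{\beta}\bigr),\qquad \beta=\alpha/2.
$$
A scaling argument yields $\|\partial_\theta^k b\|_{L^2(d\theta)}=O(\tau^{k\beta})$; consequently $\|\Delta_g a\|_{L^2(M)}=O(\tau^{2\beta})=O(\tau^\alpha)$, giving the quasimode bound $\|(-\Delta_g-s^2)v_s\|_{L^2(M)}=O(\tau^\alpha)$. Since $|v_s|^2=e^{-2\lambda r}|g|^{-1/2}|b|^2$ and $\|b\|_{L^2}=O(1)$, the bound $\|v_s\|_{L^2(M)}=O(1)$ follows from the compactness of $M$.

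For the weak limit, for any $\psi\in C(M)$ the integral $\int_M|v_s|^2\psi\,dV_g$ becomes
$$
\iint e^{-2\lambda r}\,|b(\theta)|^2\,\psi(r,\theta)\,\chi_M(r,\theta)\,dr\,d\theta
$$
in polar coordinates, and the same scaling gives $|b(\theta)|^2\,d\theta\rightharpoonup e^{2\lambda r_0}\delta_{\theta_0}$ weakly as $\tau\to\infty$. Nontangentiality of $\gamma$ at $t=0$ and $t=L$ implies that for $\theta$ in a shrinking neighborhood of $\theta_0$ the radial ray meets $M$ in an interval whose endpoints depend continuously on $\theta$ and converge to $r_0$ and $r_0+L$. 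Passing to the limit produces $e^{2\lambda r_0}\int_{r_0}^{r_0+L}e^{-2\lambda r}\psi(\gamma(r-r_0))\,dr$, which equals $\int_0^L e^{-2\lambda t}\psi(\gamma(t))\,dt$ after the substitution $t=r-r_0$. The main technical point is the tangential derivative loss: the concentration scale $\tau^{-\beta}$ must be small enough that $\Delta_g a$ remains $O(\tau^\alpha)$, yet positive so that the limit measure is supported exactly on $\gamma$; any $\beta\in(0,1/2)$ works and yields the claim for the corresponding $\alpha=2\beta\in(0,1)$.
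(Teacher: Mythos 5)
Your proof follows essentially the same route as the paper: embed $(M,g)$ in a larger simple manifold, place the pole $\omega$ of polar normal coordinates outside $M$ so that $\gamma$ becomes a radial ray $\theta=\theta_0$, take the WKB ansatz $v_s=e^{isr}|g|^{-1/4}b_\tau(\theta)$ with $b_\tau$ a concentrating bump at $\theta_0$, and use that $|dr|_g^2=1$ and $\Delta_g r=\tfrac12\partial_r\log|g|$ to kill the transport term. You have filled in the details that the paper elides, in particular the explicit scaling $b(\theta)=e^{\lambda r_0}\tau^{(m-1)\beta/2}\chi_0\phi((\theta-\theta_0)\tau^\beta)$ and the verification that $\|\partial_\theta^k b\|_{L^2}=O(\tau^{k\beta})$ with $\beta=\alpha/2$. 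Two small improvements over the paper's sketch: (i) you include the normalizing factor $e^{\lambda r_0}$ in $b$, which is needed to match the stated limit $\int_0^L e^{-2\lambda t}\psi(\gamma(t))\,dt$ (the paper normalizes $\|b_\tau\|_{L^2}=1$, which would produce an extra $e^{-2\lambda r_0}$); and (ii) you track the concentration in $L^2$ rather than in $W^{2,\infty}$, which is what actually controls $\|(-\Delta_g-s^2)v_s\|_{L^2}$ and gives the clean relation $\beta=\alpha/2$ independent of dimension. The use of nontangentiality to justify passage to the limit for $\psi\in C(M)$ rather than $\psi\in C^\infty_c(M^{\mathrm{int}})$ is also a worthwhile addition.
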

\begin{proof}
One first embeds $(M,g)$ in a slightly larger simple manifold $(D,g)$ and considers polar normal coordinates $(r,\theta)$ centered at a point $\omega \in D \smallsetminus M$. There exist $\omega \in D \smallsetminus M$ and $\theta_0 \in S^{m-1}$ so that $\gamma$ is part of the geodesic $r \mapsto (r,\theta_0)$ in $D$ (any nontangential geodesic in $M$ arises in this way for some $\omega$ and $\theta_0$). By using a WKB ansatz and choosing suitable solutions of the eikonal and transport equations as in \cite[Section 5]{DKSaU}, the quasimodes at frequency $s = \tau+i\lambda$ can be chosen as 
$$
v_s(r,\theta) = e^{isr} \abs{g(r,\theta)}^{-1/4} b_{\tau}(\theta)
$$
where $b_{\tau} \in C^{\infty}(S^{m-1})$ is an approximation of the delta function so that 
\begin{gather*}
\norm{b_{\tau}}_{L^2(S^{m-1})} = 1, \quad \norm{b_{\tau}}_{W^{2,\infty}(S^{m-1})} = O(\tau^{\alpha}), \\
\abs{b_{\tau}}^2 \,dS \to \delta_{\theta_0} \text{ weakly as $\tau \to \infty$}.
\end{gather*}
A direct computation shows the required norm bounds, and we have for any $\psi \in C^{\infty}_c(M^{\text{int}})$ 
$$
\int_{M} \abs{v_{\tau+i\lambda}}^2 \psi \,dV_{g} \to \int_0^L e^{-2\lambda r} \psi(r,\theta_0) \,dr \quad \text{as } \tau \to \infty.
$$
\end{proof}

We now move to the proof of Proposition \ref{prop_gaussianbeam_quasimode}. The main difference to the case where $(M,g)$ is simple is that the quasimodes can not be constructed using the WKB ansatz by solving eikonal and transport equations globally in $M$, due to the presence of conjugate points. Instead, we follow the construction of Gaussian beams: the eikonal and transport equations are only solved to high order on the geodesic, and we employ a complex phase function with Gaussian decay away from the geodesic. The phase function will be obtained by solving a matrix Riccati equation. It will be computationally convenient to use Fermi coordinates, since these are globally defined near a geodesic (modulo possible self-intersections).

We first record a few elementary lemmas (for proofs see \cite[Section 7]{KSa}).

\begin{lemma} \label{lemma_selfintersection_elementary}
Let $(\widehat{M},g)$ be a closed manifold, and let $\gamma: (a,b) \to \widehat{M}$ be a unit speed geodesic segment having no loops. Then there are only finitely many times $t \in (a,b)$ for which $\gamma$ intersects itself at $\gamma(t)$.
\end{lemma}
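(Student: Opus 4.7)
The natural plan is a compactness-and-transversality argument by contradiction. I will suppose that the set of self-intersection times of $\gamma$ in $(a,b)$ is infinite and derive a contradiction.

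First I would pick an infinite sequence of distinct self-intersection times $t_n \in (a,b)$, together with partners $s_n \in (a,b)$ satisfying $s_n \neq t_n$ and $\gamma(s_n) = \gamma(t_n)$. Because $\widehat{M}$ is closed, geodesics are complete and $\gamma$ extends smoothly to an open neighborhood of $[a,b]$; after passing to a subsequence, $t_n \to t^*$ and $s_n \to s^*$ in $[a,b]$, with $\gamma(t^*) = \gamma(s^*)$. I would then split into two cases.

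\emph{Case 1: $t^* \neq s^*$.} The no-loops hypothesis gives $\dot\gamma(t^*) \neq \dot\gamma(s^*)$. The key geometric observation---and in my view the only nontrivial input to the proof---is that the unit speed condition additionally rules out the antipodal case $\dot\gamma(t^*) = -\dot\gamma(s^*)$: if that occurred, then by uniqueness of geodesics from initial data the curves $u \mapsto \gamma(t^*+u)$ and $u \mapsto \gamma(s^*-u)$ would coincide on an interval about $u=0$, and differentiating the resulting identity and setting $u = (s^*-t^*)/2$ would force $\dot\gamma$ to vanish at the midpoint. Hence $\dot\gamma(t^*)$ and $\dot\gamma(s^*)$ are linearly independent, and in local coordinates around $\gamma(t^*)=\gamma(s^*)$ the map $F(t,s) = \gamma(t) - \gamma(s)$ has a rank-$2$ Jacobian at $(t^*,s^*)$. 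By the implicit function theorem its zero set is locally just the point $(t^*,s^*)$, contradicting the accumulation of the pairs $(t_n,s_n)$ with $F(t_n,s_n) = 0$.

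\emph{Case 2: $t^* = s^*$.} Here the mean value theorem in a local chart about $\gamma(t^*)$ gives $|\gamma(t_n)-\gamma(s_n)| \geq \tfrac12 |t_n-s_n|$ for large $n$, because $|\dot\gamma(t^*)|=1$ and $\dot\gamma$ is continuous. This contradicts $\gamma(t_n)=\gamma(s_n)$ with $t_n \neq s_n$. The hard part of the whole argument is really the Case~1 linear-independence step; once that is in place, the rest is a routine compactness-plus-implicit-function-theorem exercise.
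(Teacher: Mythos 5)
The paper does not reproduce the proof of this lemma, deferring to Section 7 of \cite{KSa}, so I can only assess your argument on its own merits. Your overall plan --- extract an accumulating pair $(t^*,s^*) \in [a,b]^2$ by compactness, then show the self-intersection is transversal and hence isolated --- is the right idea, and the observation that $\dot\gamma(t^*) = -\dot\gamma(s^*)$ together with uniqueness of geodesics would force $\dot\gamma$ to vanish at the midpoint is exactly the one nonroutine step; I agree it is the crux.

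There is, however, a genuine gap in Case 1. You write that the no-loops hypothesis gives $\dot\gamma(t^*) \neq \dot\gamma(s^*)$, but that hypothesis only constrains pairs with \emph{both} entries strictly inside $(a,b)$, whereas your accumulation points $t^*,s^*$ a priori lie in the closed interval $[a,b]$. When exactly one of $t^*,s^*$ is an endpoint the conclusion still holds, though it needs a sentence: if $\dot\gamma(t^*) = \dot\gamma(s^*)$, then by ODE uniqueness $\gamma(t^*+u)=\gamma(s^*+u)$ for all $u$, and a small shift $u$ of the correct sign puts both shifted times into $(a,b)$, producing a forbidden loop. When $\{t^*,s^*\}=\{a,b\}$, on the other hand, the equality $\dot\gamma(a)=\dot\gamma(b)$ is genuinely not excluded by the hypotheses --- it just says $\gamma|_{(a,b)}$ is one primitive period of a closed geodesic --- and the rank-two Jacobian argument is then unavailable. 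That subcase still yields a contradiction: periodicity gives $\gamma(t_n+(b-a))=\gamma(t_n)=\gamma(s_n)$ with $s_n < b < t_n+(b-a)$ and both times converging to $b$, so your Case 2 local-injectivity argument, applied to the extended geodesic near $b$, finishes it. But the proof as written silently assumes this case away, and the fix should be stated.
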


\begin{lemma} \label{lemma_diffeomorphism_elementary}
Let $F$ be a $C^{\infty}$ map from a neighborhood of $(a,b) \times \{0\}$ in $\mR^n$ into a smooth manifold such that $F|_{(a,b) \times \{0\}}$ is injective and $DF(t,0)$ is invertible for $t \in (a,b)$. If $[a_0,b_0]$ is a closed subinterval of $(a,b)$, then $F$ is a $C^{\infty}$ diffeomorphism in some neighborhood of $[a_0,b_0] \times \{0\}$ in $\mR^n$.
\end{lemma}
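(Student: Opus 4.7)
The plan is to combine the inverse function theorem (applied at each point of the compact segment) with a compactness argument to rule out global non-injectivity. The conclusion of the lemma requires two things: that $F$ is locally a diffeomorphism at every point of some neighborhood $U$ of $[a_0,b_0]\times\{0\}$, and that $F$ is injective on $U$. The first is infinitesimal and follows essentially by continuity; the second is the part that has global content and requires more care.

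First I would invoke the inverse function theorem: since $DF(t,0)$ is invertible for every $t\in[a_0,b_0]$ and $DF$ depends continuously on its base point, for each $t\in[a_0,b_0]$ there is an open neighborhood $V_t$ of $(t,0)$ in $\mR^n$ on which $F$ is a $C^\infty$ diffeomorphism onto its image. By compactness of $[a_0,b_0]\times\{0\}$, finitely many such $V_t$ cover it, and I can choose an open tube $W=\{(t,x):a_0-\delta<t<b_0+\delta,\ |x|<\delta\}$ (for small $\delta>0$) whose closure inside $(a,b)\times\mR^{n-1}$ is contained in the union of these $V_t$'s and such that every point of $W$ lies in some $V_{t}$ on which $F$ is a diffeomorphism. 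This gives local injectivity throughout $W$ and shows $DF$ is invertible on $W$.

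Next I would upgrade to global injectivity on a perhaps smaller tube $U\subset W$ by contradiction. Suppose no such $U$ exists. Then there are sequences of distinct pairs $(t_k,x_k)\neq (s_k,y_k)$ in $W$ with $x_k,y_k\to 0$, $t_k,s_k\in[a_0-\delta,b_0+\delta]$, and $F(t_k,x_k)=F(s_k,y_k)$. Passing to subsequences by compactness of $[a_0-\delta,b_0+\delta]$, we get $t_k\to t_*$ and $s_k\to s_*$ in $[a_0-\delta,b_0+\delta]\subset (a,b)$, and by continuity $F(t_*,0)=F(s_*,0)$. The injectivity hypothesis on $(a,b)\times\{0\}$ forces $t_*=s_*$. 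But then for $k$ large both $(t_k,x_k)$ and $(s_k,y_k)$ lie in the single local diffeomorphism neighborhood $V_{t_*}$, contradicting injectivity of $F|_{V_{t_*}}$.

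Combining the two steps, there is an open neighborhood $U$ of $[a_0,b_0]\times\{0\}$ on which $F$ is smooth, injective, and has invertible differential everywhere; hence $F:U\to F(U)$ is a $C^\infty$ diffeomorphism onto the open set $F(U)$. The main (mild) obstacle is the compactness bookkeeping: I need to ensure the tube $W$ is chosen so that it stays inside the domain of $F$ and inside the union of the finitely many $V_t$'s simultaneously, which is why I take $[a_0-\delta,b_0+\delta]\subset (a,b)$ and shrink $\delta$ if necessary. Everything else is a routine application of the inverse function theorem plus a sequence-extraction argument, and the hypothesis that $[a_0,b_0]$ is a \emph{closed} subinterval of the \emph{open} interval $(a,b)$ is exactly what provides the buffer needed for this compactness step.
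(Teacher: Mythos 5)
Your proof is correct; note that the paper does not prove this lemma in-text but defers to \cite[Section 7]{KSa}, where the argument is exactly the one you give (inverse function theorem at each point of the compact segment, then a sequence-extraction contradiction using injectivity of $F$ on $(a,b)\times\{0\}$ to get injectivity on a small tube). No gaps.
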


The next lemma gives a system of Fermi coordinates near a geodesic that will be useful for the construction of Gaussian beam quasimodes. If the geodesic self-intersects, one needs several coordinate neighborhoods.

\begin{lemma} \label{lemma_quasimode_coordinates}
Let $(\widehat{M},g)$ be a compact manifold without boundary, and assume that $\gamma: (a,b) \to \widehat{M}$ is a unit speed geodesic segment with no loops. Given a closed subinterval $[a_0,b_0]$ of $(a,b)$ such that $\gamma|_{[a_0,b_0]}$ self-intersects only at times $t_j$ with $a_0 < t_1 < \ldots < t_N < b_0$ (set $t_0 = a_0$ and $t_{N+1} = b_0$), there is an open cover $\{(U_j,\varphi_j)\}_{j=0}^{N+1}$ of $\gamma([a_0,b_0])$ consisting of coordinate neighborhoods having the following properties:
\begin{enumerate}
\item 
$\varphi_j(U_j) = I_j \times B$ where $I_j$ are open intervals and $B = B(0,\delta')$ is an open ball in $\mR^{n-1}$ where $\delta'$ can be taken arbitrarily small,
\item 
$\varphi_j(\gamma(t)) = (t,0)$ for $t \in I_j$,
\item 
$t_j$ only belongs to $I_j$ and $\overline{I}_j \cap \overline{I}_k = \emptyset$ unless $\abs{j-k} \leq 1$,
\item 
$\varphi_j = \varphi_k$ on $\varphi_j^{-1}((I_j \cap I_k) \times B)$.
\end{enumerate}
Further, the metric in these coordinates satisfies $g^{jk}|_{\gamma(t)} = \delta^{jk}$, $\partial_i g^{jk}|_{\gamma(t)} = 0$.
\end{lemma}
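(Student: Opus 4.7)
The plan is to build all the charts simultaneously as restrictions of a single, globally defined Fermi map along $\gamma$, and then cut the resulting long tubular neighborhood into the pieces $U_j$ dictated by the self-intersection structure. First I would fix an orthonormal frame $E_1(t),\ldots,E_{n-1}(t)$ of $\dot\gamma(t)^\perp\subset T_{\gamma(t)}\widehat{M}$ obtained by parallel transport along $\gamma|_{(a,b)}$, and introduce the global Fermi map
$$
F:(a,b)\times\mR^{n-1}\to\widehat{M},\qquad F(t,y)=\exp_{\gamma(t)}\!\bigl(y^iE_i(t)\bigr).
$$
Along the zero section $F(t,0)=\gamma(t)$, and $DF(t,0)$ sends the standard basis of $\mR\times\mR^{n-1}$ to the orthonormal frame $\{\dot\gamma(t),E_1(t),\ldots,E_{n-1}(t)\}$, hence is invertible for every $t$.

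Next I would choose the one-dimensional cover. With $t_0=a_0$ and $t_{N+1}=b_0$, for each $j\in\{0,\ldots,N+1\}$ I pick an open interval $I_j\ni t_j$ such that $\overline{I_j}$ contains none of the other $t_k$, consecutive intervals $I_j,I_{j+1}$ overlap, and $\overline{I_j}\cap\overline{I_k}=\emptyset$ whenever $|j-k|\geq 2$; this is possible because the self-intersection times are finite and strictly ordered. The restriction $\gamma|_{I_j}$ is then injective, for any equality $\gamma(t)=\gamma(t')$ with $t\neq t'$ in $I_j$ would force both $t,t'$ to be self-intersection times of $\gamma|_{[a_0,b_0]}$, and only $t_j$ is such in $I_j$. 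Hence Lemma~\ref{lemma_diffeomorphism_elementary} applies to $F$ along a closed subinterval of $I_j$ and produces $\delta_j>0$ for which $F|_{I_j\times B(0,\delta_j)}$ is a diffeomorphism (after possibly shrinking $I_j$ a little, keeping the overlap properties intact). Taking $\delta'=\min_j\delta_j$, I would set $U_j=F(I_j\times B)$ with $B=B(0,\delta')$ and $\varphi_j=(F|_{I_j\times B})^{-1}$.

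Properties (1)--(3) are then built into the construction. For (4) the key observation is that all $\varphi_j$ are local inverses of the \emph{same} map $F$: if $p\in\varphi_j^{-1}\bigl((I_j\cap I_k)\times B\bigr)$, then both $\varphi_j(p)$ and $\varphi_k(p)$ lie in $I_k\times B$, on which $F$ is injective, so they agree. To check the Fermi identities at $\gamma(t)$, note that the coordinate vectors there are $\partial_t=\dot\gamma(t)$ and $\partial_{y^i}=E_i(t)$, an orthonormal basis, giving $g_{ij}|_{(t,0)}=\delta_{ij}$. The vanishing of the transverse derivatives $\partial_{y^k}g_{ij}$ follows because $y\mapsto F(t_0,y)$ is a Riemannian normal coordinate chart centered at $\gamma(t_0)$, while $\partial_t g_{ij}(t,0)=0$ is read off from $g_{ij}(t,0)\equiv\delta_{ij}$ together with parallelism of the frame. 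Inverting the metric yields the same identities for $g^{jk}$.

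The main difficulty I anticipate is ensuring that $F$ is \emph{globally} a diffeomorphism on each $I_j\times B$ despite the possible self-intersections of the underlying geodesic; this is precisely what forces the multi-chart structure and requires separating the self-intersection times into distinct coordinate patches. Fixing the parallel frame once and for all along $\gamma$ then makes the compatibility property (4) automatic, which is the other mildly delicate point.
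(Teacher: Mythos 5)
Your coordinate construction matches the paper's: a single global Fermi map $F$ along $\gamma$ obtained from a parallel orthonormal frame, cut into overlapping intervals that isolate the self-intersection times, with Lemmas~\ref{lemma_selfintersection_elementary} and \ref{lemma_diffeomorphism_elementary} giving diffeomorphisms on each piece; compatibility of the charts in (4) is automatic since all the $\varphi_j$ are restrictions of $F^{-1}$.

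The verification that $\partial_i g^{jk}|_{\gamma(t)} = 0$ has a gap, however. You attribute $\partial_{y^\gamma} g_{ij}|_{(t_0,0)}=0$ for all $i,j$ to the fact that $y\mapsto F(t_0,y)$ is a normal coordinate chart centered at $\gamma(t_0)$. But that map only parametrizes the $(m-1)$-dimensional slice $\{t=t_0\}$, so a normal-coordinates argument on the slice controls at most the purely transversal components $g_{\alpha\beta}$. (Even there, the slice is not totally geodesic and $F(t_0,\cdot)$ is not its intrinsic exponential map; what one actually uses is that the curves $s\mapsto F(t_0,sa)$ are \emph{ambient} geodesics, which via the geodesic equation gives $\Gamma_{\alpha\beta}^l(\gamma(t_0))=0$ for all $l$.) The components $g_{1\alpha}$ and $g_{11}$ involve $\partial_t$, a direction not tangent to the slice, so the vanishing of $\partial_{y^\gamma} g_{1\alpha}|_{\gamma}$ and $\partial_{y^\gamma} g_{11}|_{\gamma}$ is not covered by your argument. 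This is precisely where the parallelism of the frame is needed for the \emph{transversal} derivatives, not only for $\partial_t g_{ij}$ where you invoke it: $\nabla_{\dot\gamma}E_\alpha=0$ gives $\Gamma_{1\alpha}^k|_{\gamma}=0$, and then $\partial_\alpha g_{11}|_{\gamma}=2\langle\nabla_{\partial_\alpha}\partial_1,\partial_1\rangle=0$ and $\partial_\alpha g_{1\beta}|_{\gamma}=\Gamma_{\alpha\beta}^1(\gamma(t))=0$. This is exactly the explicit Christoffel-symbol computation carried out in the paper's proof, and it is the piece your write-up is missing.
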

\begin{proof}
The proof is based on Fermi coordinates. Choose $\{ v_2, \ldots, v_m \}$ in $T_{\gamma(a_0)} \widehat{M}$ such that $\{ \dot{\gamma}(a_0), v_2, \ldots, v_m \}$ is an orthonormal basis of $T_{\gamma(a_0)} \widehat{M}$. Let $E_{\alpha}(t)$ be the parallel transport of $v_{\alpha}$ along the geodesic $\gamma$. Since $\dot{\gamma}(t)$ is also parallel along $\gamma$, the set $\{ \dot{\gamma}(t), E_2(t), \ldots, E_m(t) \}$ is an orthonormal basis of $T_{\gamma(t)} \widehat{M}$ for $t \in (a,b)$.

Define the function 
$$
F: (a,b) \times \mR^{n-1} \to \widehat{M}, \ \ F(t,y) = \exp_{\gamma(t)}(y^{\alpha} E_{\alpha}(t)).
$$
Here $\exp$ is the exponential map in $(\widehat{M},g)$ and $\alpha$, $\beta$ run from $2$ to $m$. Then $F(t,0) = \gamma(t)$ and (with $e_{\alpha}$ the $\alpha$th coordinate vector) 
$$
\frac{\partial}{\partial s} F(t,s e_{\alpha})\big|_{s=0} = E_{\alpha}(t), \quad \frac{\partial}{\partial t} F(t,0) = \dot{\gamma}(t).
$$
Thus $F$ is a $C^{\infty}$ map near $(a,b) \times \{0\}$ such that $DF(t,0)$ is invertible for $t \in (a,b)$.

In the case where $\gamma$ does not self-intersect, $F|_{(a,b) \times \{0\}}$ is injective and Lemma \ref{lemma_diffeomorphism_elementary} implies the existence of a single coordinate neighborhood of $\gamma([a_0,b_0])$ so that (1) and (2) are satisfied (then (3) and (4) are void). In the general case, by Lemma \ref{lemma_selfintersection_elementary} the geodesic segment $\gamma|_{[a_0,b_0]}$ only self-intersects at finitely many times $t_j$ with $a_0 < t_1 < \ldots < t_N < b_0$ . For some sufficiently small $\delta$, $\gamma$ is injective on the intervals $(a,t_1-\delta)$, $(t_1-2\delta,t_2-\delta)$, \ldots, $(t_N-2\delta,b)$ and each interval intersects at most two of the others. Restricting the map $F$ above to suitable neighborhoods corresponding to these intervals (or slightly smaller ones) and using Lemma \ref{lemma_diffeomorphism_elementary}, we obtain the required coordinate charts with $\varphi_j = F^{-1}|_{U_j}.$

It remains to check the form of the metric in the coordinates $x$ where $x^1 = t$ and $x^{\alpha} = y^{\alpha}$ with $\alpha = 2, \ldots, m$. Since the set $\{ \dot{\gamma}(t), E_2(t), \ldots, E_m(t) \}$ is orthonormal, it follows that 
$$
g_{jk}|_{\gamma(t)} = \delta_{jk}.
$$
Thus also $\partial_1 g_{jk}|_{\gamma(t)} = 0$. We compute 
$$
\partial_{\alpha} g_{jk} = \partial_{\alpha} \langle \partial_j, \partial_k \rangle = \langle \nabla_{\partial_{\alpha}} \partial_j, \partial_k \rangle + \langle \partial_j, \nabla_{\partial_{\alpha}} \partial_k \rangle
$$
where $\nabla$ is the Levi-Civita connection. Since $\nabla_{\partial_j} \partial_k = \nabla_{\partial_k} \partial_j$, we have  
$$
\nabla_{\partial_{\alpha}} \partial_1|_{\gamma(t)} = \nabla_{\partial_1} \partial_{\alpha}|_{\gamma(t)} = \nabla_{\dot{\gamma}(t)} E_{\alpha}(t)|_{\gamma(t)} = 0
$$
because $E_{\alpha}$ is parallel along $\gamma$. Thus $\partial_{\alpha} g_{11}|_{\gamma(t)} = 0$, and also 
$$
\partial_{\alpha} g_{1 \beta}|_{\gamma(t)} = \langle \partial_1, \nabla_{\partial_{\alpha}} \partial_{\beta} \rangle|_{\gamma(t)} = \Gamma_{\alpha \beta}^1(\gamma(t))
$$
where $\Gamma_{jk}^l$ are the Christoffel symbols. Considering the geodesic $z(s) = \exp_{\gamma(t)}(sa^{\alpha} E_{\alpha}(t))$ for some constants $a^{\alpha}$, so that $z(s)$ is given in the $x$ coordinates by $z(s) = (t, sa^2, \ldots, sa^m)$, the geodesic equation 
$$
\ddot{z}^l(s) + \Gamma_{jk}^l(z(s)) \dot{z}^j(s) \dot{z}^k(s) = 0
$$
implies that for all $\alpha, \beta = 2, \ldots, m$ and $l = 1,\ldots,m$ we have 
$$
\Gamma_{\alpha \beta}^l(\gamma(t)) a^{\alpha} a^{\beta} = 0.
$$
Since $a^{\alpha}$ were arbitrary and $\Gamma_{\alpha \beta}^l = \Gamma_{\beta \alpha}^l$, we obtain 
$$
\Gamma_{\alpha \beta}^l(\gamma(t)) = 0.
$$
Thus $\partial_{\alpha} g_{1 \beta}|_{\gamma(t)} = 0$. Finally, 
\begin{align*}
\partial_{\alpha} g_{\beta \delta}|_{\gamma(t)} &= \langle \nabla_{\partial_{\alpha}} \partial_{\beta}, \partial_{\delta} \rangle + \langle \partial_{\beta}, \nabla_{\partial_{\alpha}} \partial_{\delta} \rangle|_{\gamma(t)} = \Gamma_{\alpha \beta}^{\delta}(\gamma(t)) + \Gamma_{\alpha \delta}^{\beta}(\gamma(t)) \\ 
 &= 0.
\end{align*}
We have proved that $\partial_i g^{jk}|_{\gamma(t)} = 0$ for all $i, j, k$.
\end{proof}

\begin{proof}[Proof of Proposition \ref{prop_gaussianbeam_quasimode}]
We begin by embedding $(M,g)$ in some closed manifold $(\widehat{M},g)$, and extend $\gamma$ as a unit speed geodesic in $\widehat{M}$. Let $\eps > 0$ be such that $\gamma(t) \in \widehat{M} \smallsetminus M$ for $t \in [-2\eps,0) \cup (L,L+2\eps]$ (here we use the fact that $\gamma$ is nontangential). Our purpose is to construct a Gaussian beam quasimode near $\gamma([-\eps,L+\eps])$.

Fix a point $p_0 = \gamma(t_0)$ on $\gamma([-\eps,L+\eps])$ and let $(t,y)$ be coordinates near $p_0$, defined in a set $U = \{ (t,y) \,;\, \abs{t-t_0} < \delta, \abs{y} < \delta' \}$, such that the geodesic near $p_0$ is given by $\Gamma = \{ (t,0) \,;\, \abs{t-t_0} < \delta \}$, and 
$$
g^{jk}|_{\Gamma} = \delta^{jk}, \quad \partial_i g^{jk}|_{\Gamma} = 0.
$$
Here we write $x = (t,y)$ where $t = x_1$ and $y = (x_2,\ldots,x_m)$. (Of course we will later use the coordinates in Lemma \ref{lemma_quasimode_coordinates}.) We will construct a quasimode $v_s$ concentrated near $\Gamma$, having the form 
$$
v_s = e^{is\Theta} a
$$
where $s = \tau+i\lambda$, and $\Theta$ and $a$ are smooth complex functions near $\Gamma$ with $a$ supported in $\{ \abs{y} < \delta'/2 \}$.

We compute 
$$
(-\Delta - s^2) v_s = f
$$
where 
$$
f = e^{is\Theta}(s^2 [(\langle d\Theta, d\Theta \rangle - 1)a] - is[2\langle d\Theta, da \rangle + (\Delta \Theta) a] - \Delta a).
$$
Here, the $g$-inner product $\langle \,\cdot\,,\,\cdot\, \rangle$ has been extended as a complex bilinear form to complex valued tensors. We first choose $\Theta$ so that 
\begin{equation} \label{theta_nth_order}
\langle d\Theta, d\Theta \rangle = 1 \quad \text{to $N$th order on $\Gamma$}.
\end{equation}
In fact we look for $\Theta$ of the form $\Theta = \sum_{j=0}^N \Theta_j$ where 
\begin{equation} \label{theta_taylor}
\Theta_j(t,y) = \sum_{\abs{\alpha} = j} \frac{\Theta_{j,\alpha}(t)}{\alpha!} y^{\alpha}.
\end{equation}
We also write $g^{jk} = \sum_{l=0}^N g^{jk}_l + r^{jk}_{N+1}$ where 
$$
g^{jk}_l(t,y) = \sum_{\abs{\beta} = l} \frac{g^{jk}_{l,\beta}(t)}{\beta!} y^{\beta}, \quad r^{jk}_{N+1} = O(\abs{y}^{N+1}).
$$
By the properties of our coordinates, $g^{jk}_0 = \delta^{jk}$ and $g^{jk}_1 = 0$.

Choose $\Theta_0(t) = t$ and $\Theta_1(t,y) = 0$. With the understanding that $j,k$ run from $1$ to $m$ and $\alpha,\beta$ run from $2$ to $m$, we have 
\begin{multline}
g^{jk} \partial_j \Theta \partial_k \Theta - 1 = (1+g^{11}_2+\ldots)(1+\partial_t \Theta_2+\ldots)(1+\partial_t \Theta_2+\ldots) \\
 + 2(g^{1\alpha}_2+\ldots)(1+\partial_t \Theta_2+\ldots)(\partial_{y_{\alpha}} \Theta_2+\ldots) \\
 + (\delta^{\alpha \beta}+g_2^{\alpha \beta}+\ldots)(\partial_{y_{\alpha}} \Theta_2+\partial_{y_{\alpha}} \Theta_3+\ldots)(\partial_{y_{\beta}} \Theta_2+\partial_{y_{\beta}} \Theta_3+\ldots) - 1 \\
  = [2\partial_t \Theta_2 + \nabla_y \Theta_2 \cdot \nabla_y \Theta_2 + g_2^{11}] \\
  + \sum_{p=3}^N \Big[ 2\partial_t \Theta_p + 2 \nabla_y \Theta_2 \cdot \nabla_y \Theta_p + \sum_{l=0}^p g_l^{11} \sum_{\overset{j+k=p-l}{j,k<p}} \partial_t \Theta_j \partial_t \Theta_k \\
  + 2 \sum_{l=2}^p g_l^{1 \alpha} \sum_{\overset{j+k=p+1-l}{k \geq 2}} \partial_t \Theta_j \partial_{\alpha} \Theta_k + \sum_{l=0}^{p-2} g_l^{\alpha \beta} \sum_{\overset{j+k=p+2-l}{2\leq j,k < p}} \partial_{\alpha} \Theta_j \partial_{\beta} \Theta_k \Big] \\
  + O(\abs{y}^{N+1}). \label{theta_equation_terms}
\end{multline}
In the last equality, we have grouped the terms in such a way that each quantity in brackets is a homogeneous polynomial in $y$ (the first term in brackets has degree $2$, and the others have degree $p$ for $p=3,\ldots,N$).

We will first choose $\Theta_2$ so that the first term in brackets vanishes. Writing $\Theta_2(t,y) = \frac{1}{2} H(t) y \cdot y$ where $H(t)$ is a smooth complex symmetric matrix, it follows that $H$ should satisfy the matrix Riccati equation 
$$
\dot{H}(t) + H(t)^2 = F(t)
$$
where $F(t)$ is the symmetric matrix such that $g^{11}_2(t,y) = -F(t)y \cdot y$. Choosing $H(t_0) = H_0$ where $H_0$ is some complex symmetric matrix with $\im(H_0)$ positive definite, it follows that the Riccati equation has a unique smooth complex symmetric solution $H(t)$ with $\im(H(t))$ positive definite \cite[Lemma 2.56]{KKL}. This completes the construction of $\Theta_2$.

We now look at the $p=3$ term in brackets in \eqref{theta_equation_terms}, and want to choose $\Theta_3$ so that this term becomes zero. The equation becomes 
$$
2 \partial_t \Theta_3 + 2 \nabla_y \Theta_2 \cdot \nabla_y \Theta_3 = F(t,y)
$$
where $F$ is a third order homogeneous polynomial in $y$ only depending on $\Theta_2$ and $g$. Writing $\Theta_3$ as in \eqref{theta_taylor}, this equation becomes a linear first order system of ODEs for the Taylor coefficients $\Theta_{3,\alpha}(t)$, and we can solve these equations uniquely by prescribing some initial conditions on $t_0$. Thus we have found $\Theta_3$, and repeating this argument we may find $\Theta_4,\ldots,\Theta_N$ successively by solving linear first order ODEs on $\Gamma$ with prescribed initial conditions at $t_0$. In this way, we obtain a smooth $\Theta$ satisfying \eqref{theta_nth_order}.

The next step is the find $a$ such that, up to a small error, 
$$
s[2\langle d\Theta, da \rangle + (\Delta \Theta) a] - i\Delta a = 0 \quad \text{to $N$th order on $\Gamma$}.
$$
We look for $a$ in the form 
$$
a = \tau^{\frac{m-1}{4}}(a_0 + s^{-1} a_{-1} + \ldots + s^{-N} a_{-N}) \chi(y/\delta')
$$
where $\chi$ is a smooth function with $\chi = 1$ for $\abs{y} \leq 1/4$ and $\chi = 0$ for $\abs{y} \geq 1/2$. Writing $\eta = \Delta \Theta$, it is sufficient to determine $a_j$ so that 
\begin{align*}
2\langle d\Theta,d a_0 \rangle + \eta a_0 &= 0 \quad \text{to $N$th order on $\Gamma$}, \\
2\langle d\Theta,d a_{-1} \rangle + \eta a_{-1} - i\Delta a_0 &= 0 \quad \text{to $N$th order on $\Gamma$}, \\
\vdots & \\
2\langle d\Theta,d a_{-N} \rangle + \eta a_{-N} - i\Delta a_{-(N-1)} &= 0 \quad \text{to $N$th order on $\Gamma$}.
\end{align*}
Consider $a_0 = a_{00} + \ldots + a_{0N}$ where $a_{0j}(t,y)$ is a polynomial of order $j$ in $y$, and similarly let $\eta = \eta_0 + \ldots + \eta_N$. The equation for $a_0$ becomes 
\begin{multline*}
2(1+g^{11}_2+\ldots)(1+\partial_t \Theta_2+\ldots)(\partial_t a_{00}+\partial_t a_{01}+\ldots) \\
 + 4(g^{1\alpha}_2+\ldots)(1+\partial_t \Theta_2+\ldots)(\partial_{y_{\alpha}} a_{01}+\partial_{y_{\alpha}} a_{02}+\ldots) \\
 + 2(\delta^{\alpha \beta}+g^{\alpha \beta}_2+\ldots)(\partial_{y_{\alpha}} \Theta_2+\partial_{y_{\alpha}}\Theta_3+\ldots)(\partial_{y_{\beta}} a_{01}+\partial_{y_{\beta}}a_{02}+\ldots) \\
 + (\eta_0+\eta_1+\ldots)(a_{00}+a_{01}+\ldots) \\
 = [2\partial_t a_{00}+\eta_0 a_{00}] + [2\partial_t a_{01}+2\nabla_y \Theta_2 \cdot \nabla_y a_{01} + \eta_0 a_{01} + \eta_1 a_{00}] + \ldots.
\end{multline*}
Here 
$$
\eta_0(t) = \Delta \Theta(t,0) = \partial_{y_{\alpha}}(H_{\alpha \beta}(t) y_{\beta}) = \tr\,H(t).
$$
We want to choose $a_{00}$ so that the first term in brackets vanishes, that is, 
$$
\partial_t a_{00} + \frac{1}{2} (\tr\,H(t)) a_{00} = 0.
$$
This has the solution 
$$
a_{00}(t) = c_0 e^{-\frac{1}{2} \int_{t_0}^t \tr\,H(s) \,ds}, \quad a_{00}(t_0) = c_0.
$$
For later purposes we choose the constant as 
\begin{equation} \label{gaussianbeam_c0_choice}
c_0 = \frac{\sqrt[4]{\det \im(H(t_0))}}{\sqrt{\int_{\mR^{m-1}} e^{-\abs{y}^2} \,dy}}.
\end{equation}
We obtain $a_{01},\ldots,a_{0N}$ successively by solving linear first order ODEs with prescribed initial conditions at $t_0$. The functions $a_1,\ldots,a_N$ may be determined in a similar way so that the required equations are satisfied to $N$th order on $\Gamma$. This completes the construction of $a$.

We have constructed a function $v_s = e^{is\Theta} a$ in $U$ where 
\begin{align*}
\Theta(t,y) &= t + \frac{1}{2} H(t)y \cdot y + \widetilde{\Theta}, \\
a(t,y) &= \tau^{\frac{m-1}{4}}(a_0 + s^{-1} a_{-1} + \ldots + s^{-N} a_{-N}) \chi(y/\delta'), \\
a_0(t,0) &= c_0 e^{-\frac{1}{2} \int_{t_0}^t \tr\,H(s) \,ds}.
\end{align*}
Here $\widetilde{\Theta} = O(\abs{y}^3)$ and $\Theta$ and each $a_j$ are independent of $\tau$. Also, $f = (-\Delta-s^2)v_s$ is of the form 
\begin{multline*}
f = e^{is\Theta} \tau^{\frac{m-1}{4}}(s^2 h_2 a + s h_1 + \ldots + s^{-(N-1)} h_{-(N-1)} - s^{-N} \Delta a_{-N}) \chi(y/\delta') \\
+ e^{is\Theta} \tau^{\frac{m-1}{4}} s b \widetilde{\chi}(y/\delta')
\end{multline*}
where for each $j$ one has $h_j = 0$ to $N$th order on $\Gamma$, $b$ vanishes near $\Gamma$, and $\widetilde{\chi}$ is a smooth function with $\widetilde{\chi} = 0$ for $\abs{y} \geq 1/2$.

To prove the norm estimates for $v_s$ in $U$, note that 
\begin{align*}
\abs{e^{is\Theta}} &= e^{-\lambda \re\,\Theta} e^{-\tau \im\,\Theta} = e^{-\lambda t} e^{-\frac{1}{2}\tau \im(H(t))y \cdot y} e^{-\lambda O(\abs{y}^2)} e^{-\tau O(\abs{y}^3)}.
\end{align*}
Here $\im(H(t))y \cdot y \geq c\abs{y}^2$ for $(t,y) \in U$ where $c>0$ depends on $H_0$ and $\delta$. This implies that for $t$ in a compact interval, after decreasing $\delta'$ if necessary, we have 
$$
\abs{v_s(t,y)} \lesssim \tau^{\frac{m-1}{4}} e^{-\frac{1}{4}c \tau \abs{y}^2} \chi(y/\delta').
$$
This shows that 
\begin{gather*}
\norm{v_s}_{L^2(U)} \lesssim \norm{\tau^{\frac{m-1}{4}} e^{-\frac{1}{4}c \tau \abs{y}^2}}_{L^2(U)} = O(1), \\
\norm{(-\Delta-s^2)v_s}_{L^2(U)} \lesssim \norm{\tau^{\frac{m-1}{4}} e^{-\frac{1}{4}c \tau \abs{y}^2}(\tau^2 \abs{y}^{N+1} + \tau^{-N})}_{L^2(U)} = O(\tau^{\frac{3-N}{2}})
\end{gather*}
as $\tau \to \infty$. The norm estimates for $v_s$ in $U$ follow upon replacing $N$ by $2K+3$.

For later purposes we record an additional estimate: if $U \cap \partial M \neq \emptyset$, the fact that the geodesic is nontangential allows to write $\partial M$ locally in the $(t,y)$ coordinates as $\{ (t(y),y) ; \abs{y} < \varepsilon \}$ for some smooth function $t = t(y)$. By choosing $\delta'$ small enough, we then have 
\begin{align}
\norm{v_s}_{L^2(\partial M)}^2 &= \int_{\abs{y} < \varepsilon} \abs{v_s(t(y),y)}^2 \,dS(y) \lesssim \int_{\mR^{m-1}} \tau^{\frac{m-1}{2}} e^{-\frac{1}{2}c \tau \abs{y}^2} \,dy \notag \\
 &= O(1) \label{vs_boundary_estimate}
\end{align}
as $\tau \to \infty$.

We will now construct the quasimode $v_s$ in $M$ by gluing together quasimodes defined on small pieces. Let $\gamma([-\eps,L+\eps])$ be covered by open sets $U^{(0)}, \ldots, U^{(r)}$ as in Lemma \ref{lemma_quasimode_coordinates} corresponding to intervals $I^{(j)}$ (with the same $\delta'$ for each $U^{(j)}$) such that one can find quasimodes in each $U^{(j)}$. We first find a function $v_s^{(0)} = e^{is\Theta^{(0)}} a^{(0)}$ in $U^{(0)}$ as above, with some fixed initial conditions at $t=-\eps$ for the ODEs determining $\Theta^{(0)}$ and $a^{(0)}$. Choose some $t_0'$ with $\gamma(t_0') \in U^{(0)} \cap U^{(1)}$, and construct a quasimode $v_s^{(1)} = e^{is\Theta^{(1)}} a^{(1)}$ in $U^{(1)}$ by choosing the initial conditions for the ODEs for $\Theta^{(1)}$ and $a^{(1)}$ at $t_0'$ to be the corresponding values of $\Theta^{(0)}$ and $a^{(0)}$ at $t_0'$. Continuing in this way we obtain $v_s^{(2)}, \ldots, v_s^{(r)}$. Let $\{\chi_j(t)\}$ be a partition of unity near $[-\eps,L+\eps]$ corresponding to the intervals $\{I^{(j)}\}$, let $\widetilde{\chi}_j(t,y) = \chi_j(t)$ in $U^{(j)}$, and define 
$$
v_s = \sum_{j=0}^r \widetilde{\chi}_j v_s^{(j)}.
$$
Note that the ODEs for the phase functions and amplitudes have the same initial data in $U^{(j)}$ and in $U^{(j+1)}$, which shows that we actually have $v_s^{(j)} = v_s^{(j+1)}$ in $U^{(j)} \cap U^{(j+1)}$. In particular, if $p_1, \ldots, p_R$ are the distinct points where the geodesic self-intersects, if $0 \leq t_1 < \ldots < t_{R'}$ are the times when the geodesic self-intersects, and if $V_1,\ldots,V_R$ are small balls centered at $p_j$, then choosing $\delta'$ small enough we have a covering 
$$
\supp(v_s) \cap M \subset \left( \cup_{j=1}^R V_j \right) \cup \left( \cup_{k=1}^{S} W_k \right)
$$
where, in each $V_j$, the quasimode is a finite sum 
$$
v_s|_{V_j} = \sum_{\gamma(t_l) = p_j} v_s^{(l)},
$$
and in each $W_k$ there is some $l(k)$ so that the quasimode is given by 
$$
v_s|_{W_k} = v_s^{l(k)}.
$$
This shows that $L^2$ bounds for $v_s$ and $(-\Delta-s^2) v_s$ in $M$ follow from the corresponding bounds for each $v_s^{(l)}$.

We still need to verify the limit 
$$
\int_{M} \abs{v_{\tau+i\lambda}}^2 \psi \,dV_{g} \to \int_0^L e^{-2\lambda t} \psi(\gamma(t)) \,dt \quad \text{as $\tau \to \infty$}
$$
for any $\psi \in C(M)$. By a partition of unity, it is enough to consider functions $\psi$ with $\psi \in C_c(V_j \cap M)$ and $\psi \in C_c(W_k \cap M)$ (thus $\psi$ may be nonzero on $\partial M$). Let us begin with the case where $\psi \in C_c(W_k \cap M)$ for some $k$. Then $v_s = e^{is\Theta} a$ where $\Theta = t + \frac{1}{2}H(t)y \cdot y + O(\abs{y}^3)$ and $a = \tau^{\frac{m-1}{4}}(a_0 + O(\tau^{-1})) \chi(y/\delta')$. Let $\rho = \abs{g}^{1/2}$. We have 
\begin{align*}
 &\int_{M} \abs{v_{\tau+i\lambda}}^2 \psi \,dV_{g} \\
 &= \int_0^L \int_{\mR^{m-1}} e^{-2\lambda t} e^{-\tau \im(H(t))y \cdot y} e^{\tau O(\abs{y}^3)} e^{O(\abs{y}^2)} \tau^{\frac{m-1}{2}} (\abs{a_0}^2 + O(\tau^{-1})) \chi(y/\delta')^2 \psi \rho \,dt \,dy \\
 &= \int_0^L e^{-2\lambda t} \int_{\mR^{m-1}} e^{-\im(H(t))y \cdot y} e^{\tau^{-1/2} O(\abs{y}^3)} e^{\tau^{-1} O(\abs{y}^2)} \times \\
 &\qquad (\abs{a_0(t,\tau^{-1/2}y)}^2 + O(\tau^{-1})) \chi(y/\tau^{1/2} \delta')^2 \psi(t,\tau^{-1/2} y) \rho(t,\tau^{-1/2}y) \,dt \,dy.
\end{align*}
Since $\im(H(t))$ is positive definite and $\delta'$ is sufficiently small, the term $e^{-\im(H(t))y \cdot y}$ dominates the other exponentials and one obtains 
\begin{align*}
 &\lim_{\tau \to \infty} \int_{M} \abs{v_{\tau+i\lambda}}^2 \psi \,dV_{g} \\
 &= \int_0^L e^{-2\lambda t} \left( \int_{\mR^{m-1}} e^{-\im(H(t))y \cdot y} \,dy \right) \abs{a_0(t,0)}^2 \psi(t,0) \rho(t,0) \,dt.
\end{align*}
Evaluating the integral over $y$ and using that $\rho(t,0) = 1$ gives 
\begin{multline*}
\lim_{\tau \to \infty} \int_{M} \abs{v_{\tau+i\lambda}}^2 \psi \,dV_{g} \\
 = \left( \int_{\mR^{m-1}} e^{-\abs{y}^2} \,dy \right) \int_0^L e^{-2\lambda t} \frac{\abs{a_0(t,0)}^2}{\sqrt{\det\,\im(H(t))}} \psi(t,0) \,dt.
\end{multline*}
Here $a_0(t,0) = a_0(t_0,0) e^{-\frac{1}{2} \int_{t_0}^t \tr\,H(s) \,ds}$. Now we use the fact in \cite[Lemma 2.58]{KKL} that solutions of the matrix Riccati equation have the property 
$$
\det \im(H(t)) = \det \im(H(t_0)) e^{-2 \int_{t_0}^t \tr\,\re(H(s)) \,ds}.
$$
It follows that $\abs{a_0(t,0)}^2/\sqrt{\det\,\im(H(t))}$ is constant in time. The choice \eqref{gaussianbeam_c0_choice} fixes this constant and proves the limit for $\psi \in C_c(W_k \cap M)$.

Now assume that $\psi \in C_c(V_j \cap M)$, so that 
$$
v_s = \sum_{\gamma(t_l) = p_j} v_s^{(l)} \quad \text{in } \supp(\psi), \quad v_s^{(l)} = e^{is\Theta^{(l)}} a^{(l)}.
$$
It follows that 
$$
\abs{v_s}^2 = \sum_{\gamma(t_l) = p_j} \abs{v_s^{(l)}}^2 + \sum_{\underset{\gamma(t_l) = \gamma(t_l') = p_j}{l \neq l'}} v_s^{(l)} \overline{v_s^{(l')}}.
$$
The computation above gives the right limit for each $\abs{v_s^{(l)}}^2$ term. Therefore, it is enough to show that limits for the cross terms vanish as $\tau \to \infty$.

Since all self-intersections must be transversal, and since $d\Theta^{(l)}(\gamma(t_l))$ is the covector corresponding to $\dot{\gamma}(t_l)$ with respect to the metric, we may assume (by decreasing the sets $V_j$ in the original construction if necessary) that $\re(d\Theta^{(l)} - d\Theta^{(l')})$ is nonvanishing in $V_j$ if $\gamma(t_l) = \gamma(t_{l'}) = p_j$ but $l \neq l'$. The cross terms lead to terms of the form 
$$
\int_{V_j \cap M} v^{(l)} \overline{v^{(l')}} \psi \,dV = \int_{V_j \cap M} e^{i\tau \phi} w^{(l)} \overline{w^{(l')}} \psi \,dV
$$
where $\phi = \re(\Theta^{(l)} - \Theta^{(l')})$ has nonvanishing gradient in $V_j$, and $w^{(r)} = e^{is \im(\Theta^{(r)})} e^{-\lambda \re(\Phi^{(r)})} a^{(r)}$. We wish to prove that 
\begin{equation} \label{vr_crossterm_limit}
\lim_{\tau \to \infty} \int_{V_j \cap M} e^{i\tau \phi} w^{(l)} \overline{w^{(l')}} \psi \,dV = 0, \quad l \neq l',
\end{equation}
showing that the cross terms vanish in the limit. To show \eqref{vr_crossterm_limit}, let $\eps > 0$, and decompose $\psi = \psi_1 + \psi_2$ where $\psi_1 \in C^{\infty}_c(V_j \cap M)$ ($\psi_1$ may be nonzero on $\partial M$) and $\norm{\psi_2}_{L^{\infty}(V_j \cap M)} \leq \eps$. Then 
$$
\abs{\int_{V_j \cap M} e^{i\tau \phi} w^{(l)} \overline{w^{(l')}} \psi_2 \,dV} \lesssim \norm{w^{(l)}}_{L^2} \norm{w^{(l')}}_{L^2} \norm{\psi_2}_{L^{\infty}} \lesssim \eps
$$
since $\norm{w^{(r)}}_{L^2} \lesssim \norm{v^{(r)}}_{L^2} \lesssim 1$. For the smooth part $\psi_1$, we employ a non-stationary phase argument and integrate by parts using that 
$$
e^{i\tau \phi} = \frac{1}{i\tau} L(e^{i\tau \phi}), \quad Lw = \langle \abs{d\phi}^{-2} d\phi, dw \rangle.
$$
This gives 
\begin{multline*}
\int_{V_j \cap M} e^{i\tau \phi} w^{(l)} \overline{w^{(l')}} \psi_1 \,dV = \int_{\partial M} \frac{\partial_{\nu} \phi}{i\tau\abs{d\phi}^2} e^{i\tau \phi} v^{(l)} \overline{v^{(l')}} \psi_1 \,dS \\
+ \frac{1}{i\tau} \int_{V_j \cap M} e^{i\tau \phi} L^t(w^{(l)} \overline{w^{(l')}} \psi_1) \,dV.
\end{multline*}
Since $\norm{v^{(r)}}_{L^2(\partial M)} = O(1)$ by \eqref{vs_boundary_estimate}, the boundary term can be made arbitrarily small as $\tau \to \infty$. As for the last term, the worst behavior is when the transpose $L^t$ acts on $e^{is\im(\Theta^{(r)})}$, and these terms have bounds of the form 
$$
\norm{\abs{d(\im(\Theta^{(l)}))} v^{(l)}}_{L^2} \norm{v^{(l')}}_{L^2} \norm{\psi_1}_{L^{\infty}}.
$$
Here $\abs{d(\im(\Theta^{(l)}))} \lesssim \abs{y}$ if $(t,y)$ are coordinates along the geodesic segment corresponding to $v^{(l)}$, and the computation above for $\norm{v^{(l)}}_{L^2}$ shows that 
$$
\norm{\abs{d(\im(\Theta^{(l)}))} v^{(l)}}_{L^2} \norm{v^{(l')}}_{L^2} \norm{\psi_1}_{L^{\infty}} \lesssim \tau^{-1/2}.
$$
This finishes the proof of \eqref{vr_crossterm_limit}.
\end{proof}

In the end of this section, we switch back to writing $(M_0,g_0)$ for the transversal manifold. Instead of using injectivity for the attenuated ray transform (see \cite[Section 7]{DKSaU} and \cite{SaU} for injectivity results), we will reduce matters to the unattenuated ray transform.

\begin{proof}[Proof of Theorem \ref{claim4}]
This is exactly Proposition \ref{prop_gaussianbeam_quasimode}.
\end{proof}

\begin{proof}[Proof of Theorem \ref{claim2}]
Assume the conditions in Theorem \ref{claim2}, and write $q = c(q_1-q_2)$. As discussed in the end of the proof of Theorem \ref{claim3}, we can extend $q$ by zero to $\mR \times M_0$ so that the extension, also denoted by $q$, is in $C_c(\mR \times M_0)$. Now, the combination of Theorems \ref{claim3} and \ref{claim4} implies that 
\begin{equation} \label{attenuated_raytransform_vanishing}
\int_{\gamma} \widehat{q}(2\lambda,\gamma(t)) e^{-2\lambda t} \,dt = 0
\end{equation}
for any $\lambda \in \mR$ and for any nontangential geodesic $\gamma$ in $M_0$, where  
$$
\widehat{q}(2\lambda,x') = \int_{-\infty}^{\infty} e^{-2i\lambda x_1} q(x_1,x') \,dx_1.
$$
Thus the attenuated geodesic ray transform of $\widehat{q}(2\lambda,\,\cdot\,)$, with constant attenuation $-2\lambda$, vanishes over all nontangential geodesics in $M_0$.

Assume now that the unattenuated ray transform in $M_0$ (the case $\lambda=0$) is injective. Evaluating \eqref{attenuated_raytransform_vanishing} at $\lambda=0$ shows that 
$$
\int_{\gamma} \widehat{q}(0,\gamma(t)) \,dt = 0
$$
for all nontangential geodesics $\gamma$. Injectivity of the ray transform then gives that $\widehat{q}(0,\,\cdot\,) = 0$ in $M_0$. Next we differentiate \eqref{attenuated_raytransform_vanishing} with respect to $\lambda$ and evaluate at $\lambda=0$, to obtain 
$$
\int_{\gamma} \left[ 2 \frac{\partial \widehat{q}}{\partial \lambda}(0,\gamma(t)) - 2 t \widehat{q}(0,\gamma(t)) \right] \,dt = 0.
$$
But since $\widehat{q}(0,\,\cdot\,) = 0$, this implies the vanishing of the ray transform of $\frac{\partial \widehat{q}}{\partial \lambda}(0,\,\cdot\,)$ and hence also the vanishing of $\frac{\partial \widehat{q}}{\partial \lambda}(0,\,\cdot\,)$ in $M_0$. Taking higher derivatives with respect to $\lambda$ in \eqref{attenuated_raytransform_vanishing} and continuing this argument implies that 
$$
\left( \frac{\partial}{\partial \lambda} \right)^k  \widehat{q}(0,x') = 0 \quad \text{for all } x' \in M_0
$$
and for all $k \geq 0$. Using that $\widehat{q}(\,\cdot\,,x')$ is analytic as the Fourier transform of a compactly supported function, we see that $\widehat{q}(\xi_1,x') = 0$ for all $\xi_1 \in \mR$ and $x' \in M_0$. Thus $q = 0$, or $q_1 = q_2$ as required.
\end{proof}

\begin{proof}[Proof of Theorem \ref{claim1}]
First note that $C_g = \{ (f, \Lambda_g f) \,;\, f \in H^{1/2}(\partial M) \}$, where $\Lambda_g$ is the DN map 
$$
\Lambda_g: u|_{\partial M} \mapsto \partial_{\nu} u|_{\partial M}, \quad \Delta_g u = 0 \text{ in } M.
$$
If $(M,g_1)$ and $(M,g_2)$ are two CTA manifolds in the same conformal class with $C_{g_1} = C_{g_2}$, we write $g_2 = g$ and $g_1 = cg$ where $c$ is some positive function. Then 
$$
\Lambda_{cg} = \Lambda_g.
$$
Boundary determination \cite[Proposition 8.1]{DKSaU} implies that $c|_{\partial M} = 1$ and $\partial_{\nu} c|_{\partial M} = 0$. Using \cite[Proposition 8.2]{DKSaU}, this implies the following identity for DN maps of Schr\"odinger equations in $(M,g)$, 
$$
\Lambda_{g,-cq_c} = \Lambda_{g,0},
$$
where $q_c = c^{\frac{n-2}{4}} \Delta_{cg}(c^{-\frac{n-2}{4}})$. Since $(M,g)$ is a CTA manifold and the ray transform in the transversal manifold was assumed to be injective, we can now use Theorem \ref{claim2} to conclude uniqueness of the potentials, $-cq_c = 0$. But this implies that 
$$
\Delta_{cg}(c^{-\frac{n-2}{4}}) = 0 \text{ in } M, \quad c^{-\frac{n-2}{4}}|_{\partial M} = 1.
$$
Uniqueness in the Dirichlet problem implies that $c = 1$ in $M$, which shows that $g_1 = g_2$.
\end{proof}

%To end this section, 
In the next section, we outline an alternative method for constructing quasimodes concentrating near a geodesic. The method is based on microlocal reductions instead of the direct construction that was given above. 

\section{Microlocal construction} \label{sec_microlocal}

Another possible approach to constructing quasimodes is a microlocal one; canonical quantization by a Fourier integral operator allows one to reduce the semiclassical operator $\Delta_{g}+s^2$ to a simple form and construct the corresponding quasimodes. It will be convenient to use semiclassical conventions, and choose $h=\tau^{-1}$ as a small parameter.
We hope that this general construction might help to better understand the problem by providing a different viewpoint, and might suggest a way to answer Question \ref{question_intro} in the introduction perhaps via some other normal forms.
We refer to \cite{Mart} and \cite{Zworski} for a general presentation in semiclassical analysis. Let us nevertheless begin, for the convenience of the reader and to set our notations, by recalling a few definitions and results, which we will need in our exposition. 

\subsection{Elements of semiclassical analysis}

Semiclassical Sobolev spaces $H^k_{\rm scl}$ on a  closed Riemannian manifold 
(or in Euclidean space) are defined like classical Sobolev spaces but are endowed with the following norms depending on the semiclassical parameter $h \in (0,1]$,
    $$ \|u\|_{H^k_{\rm scl}} = \bigg(\sum_{j=0}^k  \|( h \nabla)^j u\|^2_{L^2}\bigg)^{\frac{1}{2}} $$
where $\nabla^j$ are covariant derivatives on the Riemannian manifold. Semiclassical symbols of order $k$ on $T^*\R^m$ are smooth functions $a$ on $\R^{2m}$ depending on a parameter $h \in (0,1]$ for which for all multiindices $(\alpha,\beta) \in \mN \times \mN$ we have 
     $$ C_{\alpha \beta} = \sup_{(x,\xi) \in T^*\R^m, h \in (0,1]} (1+|\xi|^2)^{\frac{-k+|\beta|}{2}} |\partial_x^{\alpha}\partial_{\xi}^{\beta} a(x,\xi,h)| < \infty. $$
The linear space of such symbols is denoted $S^k_{\rm scl}(T^*\R^m)$. Pseudodifferential operators are defined through the semiclassical Weyl quantization
\begin{align*}
     (\op{h}a) u (x) = (2\pi h)^{-n} \iint e^{\frac{i}{h}(x-y) \cdot \xi} a\bigg(\frac{x+y}{2},\xi,h\bigg) u(y) \, dy \, d\xi
\end{align*}
of a symbol $a \in S^k_{\rm scl}(T^*\R^m)$ and we denote $\Psi_{\rm scl}^k(\R^m)$ the corresponding space of operators. Symbols on the cotangent bundle on a compact manifold are smooth functions on $T^*\widehat{M} \times (0,1]$  which after cutoff to a coordinate patch pull back under local coordinates to symbols on $T^*\R^m$. Pseudodifferential operators of order $k$ on a compact manifold $\widehat{M}$ are operators $A_h:C^{\infty}(\widehat{M}) \to C^{\infty}(\widehat{M})$ such that 
for all pairs of coordinate patches $U,V$ and all cutoff functions $\varphi \in C^{\infty}_c(U),\psi \in C^{\infty}_c(V)$
\begin{itemize}
      \item $\|\psi A_h \varphi\|_{L(H^{-N}_{\rm scl},H^N_{\rm scl})} = O(h^{\infty})$ for all integers $N$ if the supports of $\varphi$ and $\psi$ are disjoint,
      \item $\psi A_h \varphi$ written in local coordinates is a pseudodifferential operator $\op{h} a$ on $\R^m$ with symbol $a \in S^k(T^*\R^m)$.
\end{itemize}
We write $\Psi^k_{\rm scl}(\widehat{M})$ for the linear space of semiclassical pseudodifferential operators of order $k$ on $\widehat{M}$. Using a partition of unity and local coordinates, it is possible to quantize any semiclassical symbol $a \in S^k_{\rm scl}(T^*\widehat{M})$ into a pseudodifferential operator $\op{h} a \in \Psi^k_{\rm scl}(\widehat{M})$. Conversely, one can define a map which to any pseudodifferential operator $A_h \in \Psi^k_{\rm scl}(\widehat{M})$ associates a class $[a]$ of symbols in $S^k_{\rm scl}(\widehat{M})/hS^{k-1}_{\rm scl}(\widehat{M})$ called the semiclassical principal symbol of $A$ such that $A_h-\op{h} a \in h\Psi_{\rm scl}^{k-1}(\widehat{M})$. As usual one identifies a class of symbols with any of its representatives.
\begin{definition}
      A family $u =\{u_h\}_{0<h\leq h_0}$ of distributions on a closed compact manifold 
      $\widehat{M}$ or on $\R^m$ is said to be tempered  if there exists an integer $N$ such that $\|u_h\|_{H^{-N}_{\rm scl}}=O(h^{-N}$).

      The semiclassical wavefront set $\WF(u)$ of a tempered family of distributions $u =\{u_h\}_{0<h\leq h_0}$ on a compact manifold $\widehat{M}$ (resp. $\R^m$) is the complement of the set of points $(x_0,\xi_0) \in T^*\widehat{M}$ (resp. $T^*\R^m$) for which there exists a symbol $a \in S^0_{\rm scl}$ such that, for some constant $c > 0$ independent of $h$, one has $|a(x_0,\xi_0)| \geq c$ and 
        $$ \|(\op{h} a) u_h\|_{L^2} = O(h^{\infty}). $$
\end{definition}
If $A_h=\op{h} a$, one traditionally denotes by $\WF(A_h)$ the essential support of $a$, i.e. the complement of points $(x,\xi)$ in the cotangent bundle for which $\partial_x^{\alpha}\partial_{\xi}^{\beta}a=O(h^{\infty})$ near $(x,\xi)$ for all $\alpha,\beta$.

In the Euclidean space $\R^m$, there is an equivalent definition involving the semiclassical Fourier transform
       $$ \mathcal{F}_hu(\xi) = \int e^{-\frac{i}{h}y \cdot \xi} u(y) \, dy. $$
 \begin{definition}
      A point $(x_0,\xi_0) \in T^*\R^m$  does not belong to the semiclassical wavefront set of a tempered family of distributions on $\R^m$  if there exist smooth compactly supported functions $\chi,\psi$ which equal 1 near $x_0$ and $\xi_0$, respectively, such that 
          $$ \psi \mathcal{F}_h(\chi u) = O(h^{\infty}). $$
 \end{definition}
 
 \begin{remark}
      From the previous definition, the behaviour of the semiclassical wavefront set with respect to tensor products is clear:
          $$ \WF( u \otimes w) = \{(x,y,\xi,\eta) : (x,\xi) \in \WF(u), \, (y,\eta) \in \WF(v) \}. $$
 \end{remark}
 
 \begin{example}
      The following examples of semiclassical wavefront sets of functions in the Euclidean space will be useful to our purposes: both are easily deducible from the definition involving the semiclassical Fourier transform. 
      \begin{enumerate}
           \item[1.] Coherent states or wave packets 
               $$ \WF\big( (\pi h)^{-\frac{m}{4}} e^{-\frac{1}{2h}|x-x_0|^2+\frac{i}{h}(x-x_0)\cdot \xi_0}\big) = \{(x_0,\xi_0)\} $$
               (This is example (i) page 195 section 8.4.2 in \cite{Zworski}).
           \item[2.] Smooth functions independent of the semiclassical parameter $h$ : 
               $$ \WF(u) = \supp \, u \times \{0\} $$ 
               (see Remark (ii) page 195 section 8.4.2 in \cite{Zworski}). 
      \end{enumerate}
 \end{example} 
 
We also recall the action of semiclassical Fourier integral operators whose canonical relation is the graph of a canonical transformation. Fourier integral operators are operators whose Schwartz kernels are semiclassical Lagrangian distributions associated 
with a Lagrangian manifold. We will consider Fourier integral operators associated with a Lagrangian manifold which is the graph of a canonical transformation.
We denote $\pi_1 : T^*\R^m \to \R^m$ the first projection. 
A Fourier integral operator of order $k$ associated with the graph 
      $$ G = \big\{(x,\xi,\varsigma(x,\xi)) : (x,\xi) \in V\big\} $$ 
of a canonical transformation $\varsigma : V \to W$ between two open sets $V,W$ of $T^*\R^m$ is an operator which maps distributions on $X=\pi_1(V)$ to distributions on $Y=\pi_1(W)$ whose kernel can be written modulo a smooth function which is $O(h^{\infty})$ as the sum of terms of the form
\begin{align*}
    U_h(x,y) = (2\pi h)^{-m} \int e^{\frac{i}{h}(\varphi(x,\xi)-y \cdot \xi)} a(x,\xi,h) \, d\xi
\end{align*}
where $a \in S^k(T^*\R^m)$ and $\varphi$ is a generating function of the canonical transformation $\varsigma$. We recall that a function $\varphi : \widetilde{V} \to \widetilde{W}$ is a generating function of $G$ if 
     $$ G =\big\{(x,\partial_x\varphi(x,\xi),\partial_{\xi}\varphi(x,\xi),\xi) : (x,\xi) \in \widetilde{V} \big\} $$
     in other words, the relation between the canonical transformation $\varsigma$ and the generating function $\varphi$ is given by 
     $$ \varsigma(x,\partial_x\varphi) = (\partial_{\xi} \varphi,\xi). $$
For notational purposes, one needs to introduce the twisted relation
     $$ G' = \big\{(x,y,\xi,-\eta) : (x,\xi,y,\eta) \in G \big\}. $$
Indeed, the semiclassical wavefront set of the kernel $U_h$ is contained in $G' \subset T^*(X \times Y)$.

One denotes $I^k_{\rm scl}(X\times Y,G')$ the space of such Fourier integral operators. 
The reason for adopting this notation is that one abuses notations by identifying Fourier integral operators with their kernels which are semiclassical Lagrangian distributions on $X \times Y$ with semiclassical wave front set contained in the Lagrangian submanifold $G'$ of $T^*(X \times Y)$.
If one or the other of the sets $X,Y$ is a manifold without boundary, then the previous form has to be understood in local coordinates in $x$ or $y$.

\begin{lemma}
\label{PropagationLemma}
      Let $U_h \in I^k_{\rm scl}(X \times Y ,G')$ be a Fourier integral operator associated with the graph $G$ of a canonical transformation $\varsigma : V \mapsto W$ (mapping distributions on an open set $X=\pi_1(V) \subset \R^m$ to an open set $Y = \pi_1(W) \subset \widehat{M}$), then the semiclassical wavefront set is transformed under the action of $U_h$ in the following way
      $$ \WF(U_h u_h) \subset \varsigma(\WF(u_h) \cap V). $$ 
\end{lemma}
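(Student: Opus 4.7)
The plan is a standard microlocal propagation argument. Given $(y_0,\eta_0) \notin \varsigma(\WF(u_h) \cap V)$, I will construct a symbol $b \in S^0_{\rm scl}$ with $b(y_0,\eta_0)\neq 0$ and $\|\op{h}(b)\,U_h u_h\|_{L^2}=O(h^\infty)$, which by definition shows $(y_0,\eta_0)\notin \WF(U_h u_h)$. A partition of unity reduces the argument to the case where $U_h$ has the stated single oscillatory kernel with compactly supported amplitude $a$, written in local coordinates on $X$ and $Y$.

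The principal case is $(y_0,\eta_0) = \varsigma(x_0,\xi_0) \in W$ with $(x_0,\xi_0) \notin \WF(u_h)$. Pick $c \in S^0_{\rm scl}$ equal to $1$ on a small open neighborhood of $(x_0,\xi_0)$ with $\supp c \cap \WF(u_h) = \emptyset$. By the defining property of the wavefront set, $\op{h}(c)\,u_h = O_{L^2}(h^\infty)$, so
$$
U_h u_h \equiv U_h \bigl(I-\op{h}(c)\bigr) u_h \quad \text{modulo } O_{L^2}(h^\infty).
$$
By the standard semiclassical composition formula for a Fourier integral operator and a pseudodifferential operator, $U_h \circ \bigl(I-\op{h}(c)\bigr)$ lies in $I^k_{\rm scl}(X\times Y,G')$ with principal amplitude equal, along the Lagrangian $G$, to $a$ multiplied by the pullback of $(1-c)$ under the projection $G \to V$. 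Because $1-c$ vanishes near $(x_0,\xi_0)=\varsigma^{-1}(y_0,\eta_0)$, this new amplitude is zero on a neighborhood of the critical points of the kernel phase $\Phi(x,y,\xi)=\varphi(x,\xi)-y\xi$ corresponding to $(y_0,\eta_0)$. A stationary phase expansion of $\op{h}(b)\,U_h(I-\op{h}(c))\,u_h$, with $b$ localizing near $(y_0,\eta_0)$, then yields the required $O(h^\infty)$ bound.

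The remaining case $(y_0,\eta_0) \notin W$ is easier: for $(y,\eta)$ in a neighborhood of $(y_0,\eta_0)$ the phase $\Phi$ has no critical point in $(x,\xi)$, since such a critical point would force $(y,\eta) \in W$ via the identifications $y=\partial_\xi\varphi$, $\eta=\xi$. Non-stationary phase in the oscillatory integral expressing $\op{h}(b)\,U_h u_h$, after cutting off $u_h$ in frequency using its tempered nature, then gives $O(h^\infty)$ directly.

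The main obstacle is the composition identity invoked in the principal case, i.e.\ verifying that the principal amplitude of $U_h \circ \op{h}(1-c)$ is multiplied by precisely the factor $(1-c)$ pulled back to $V$ along the canonical relation. This is a routine but somewhat delicate semiclassical stationary phase computation (cf.~\cite{Zworski}, \cite{Mart}); once it is in place, the remainder of the proof reduces to standard non-stationary and stationary phase estimates on compactly supported oscillatory integrals.
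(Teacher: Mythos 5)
The paper does not actually prove Lemma \ref{PropagationLemma}; it is stated as a known fact of semiclassical analysis, with the general references \cite{Mart}, \cite{Zworski}, and, in the classical setting, \cite{H3}. So there is no in-paper proof to compare against. What you have written is the standard microlocal argument, and its structure is correct: case split on whether $(y_0,\eta_0)$ lies in the range $W$; for the principal case, insert a microlocal cutoff $c$ supported away from $\WF(u_h)$ and equal to $1$ near $\varsigma^{-1}(y_0,\eta_0)$, use $\Psi$DO--FIO composition to see that $\op{h}(b)\,U_h(I-\op{h}(c))$ has amplitude vanishing on the graph, and conclude by (non-)stationary phase; for $(y_0,\eta_0)\notin W$, use the absence of critical points of the phase.

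Two steps are glossed over and should be made explicit in a careful write-up, though neither is a flaw in the approach. First, the paper's definition of $(x_0,\xi_0)\notin\WF(u_h)$ only hands you a single symbol $a$ elliptic at $(x_0,\xi_0)$ with $\|\op{h}(a)u_h\|_{L^2}=O(h^\infty)$; to pass to a cutoff $c\equiv1$ on a neighborhood of $(x_0,\xi_0)$ with $\supp c\cap\WF(u_h)=\emptyset$ and $\op{h}(c)u_h=O_{L^2}(h^\infty)$ you need the elliptic parametrix lemma (construct $Q_h$ with $Q_h\op{h}(a)=\op{h}(\chi)+O(h^\infty)$ for $\chi$ elliptic near $(x_0,\xi_0)$). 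Second, in the case $(y_0,\eta_0)\notin W$, the phrase "cutting off $u_h$ in frequency" is misleading: $u_h$ is merely tempered in $H^{-N}$, so the non-stationary phase integration by parts must be performed on the oscillatory kernel of $\op{h}(b)\,U_h$, showing that this operator has $O(h^\infty)$ operator norm between any fixed pair of Sobolev spaces; only then is the tempered bound on $u_h$ invoked. With these two points spelled out, the proof is complete and follows the route the cited references take.
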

We will also need a semiclassical version of Egorov's theorem.
\begin{thm}
     Let $U_h \in I^k_{\rm scl}(\R^m \times \widehat{M} ,G')$, $V_h \in I^{-k}_{\rm scl}(\widehat{M} \times \R^m,(G^{-1})')$ be two semiclassical Fourier integral operators respectively associated with the graph $G$ of the canonical transformation $\varsigma$ and the graph $G^{-1}$ of $\varsigma^{-1}$, and $A \in \Psi^l_{\rm scl}(\widehat{M})$ a pseudodifferential operator  then $V_hAU_h$ is a pseudodifferential operator in $\widehat{M}$ with principal symbol $\chi (\varsigma^*a)$ where $\chi$ is the principal symbol of the pseudodifferential operator $V_hU_h \in \Psi^0_{\rm scl}(\widehat{M})$.
\end{thm}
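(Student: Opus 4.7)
The plan is to derive this semiclassical Egorov statement from the composition calculus for Fourier integral operators whose canonical relations are graphs of canonical transformations, together with the observation that a semiclassical FIO associated with the diagonal is nothing other than a pseudodifferential operator (of the same order).

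The first step is a clean-composition check. Since $\varsigma$ is a diffeomorphism of its domain onto its range, the canonical relations $G$ and $G^{-1}$ are graphs of mutually inverse canonical transformations, and they compose transversally both with each other and with $\mathrm{diag}_{T^{*}\widehat{M}}$. One has
\[ G^{-1} \circ \mathrm{diag}_{T^{*}\widehat{M}} \circ G \;=\; \mathrm{diag}, \qquad G^{-1} \circ G \;=\; \mathrm{diag}. \]
Applying the FIO composition theorem, both $V_{h}AU_{h}$ and $V_{h}U_{h}$ are FIOs associated with the diagonal, of orders $l$ and $0$ respectively, and hence are pseudodifferential operators of those orders.

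The second step is the identification of the principal symbol. The multiplicativity of principal symbols under clean composition gives, at any $(x,\xi)$ in the source,
\[ \sigma(V_{h}AU_{h})(x,\xi) \;=\; \sigma(V_{h})\bigl(\varsigma(x,\xi),(x,\xi)\bigr)\cdot a(\varsigma(x,\xi))\cdot \sigma(U_{h})\bigl((x,\xi),\varsigma(x,\xi)\bigr), \]
where the outer factors are the half-density symbols attached to the graphs $G^{-1}$ and $G$. The scalar $a$ pulls out as $(\varsigma^{*}a)(x,\xi)$, and the remaining product is precisely what the same formula gives for the principal symbol of $V_{h}U_{h}$ when one takes $a\equiv 1$, namely $\chi$. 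This yields $\sigma(V_{h}AU_{h}) = \chi\,\varsigma^{*}a$ as asserted.

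The main technical point to execute is the stationary-phase computation behind the multiplicativity of principal symbols in this concrete setting. Writing the three kernels as oscillatory integrals with generating functions $\varphi$ for $U_{h}$ and some $\widetilde{\varphi}$ for $V_{h}$ (chosen so as to generate $\varsigma^{-1}$), and using the Weyl kernel of $A$, the kernel of $V_{h}AU_{h}$ becomes a multi-fold oscillatory integral in the intermediate position and frequency variables. The relation $\varsigma^{-1}\circ\varsigma = \mathrm{id}$ forces the critical set of the phase to lie exactly over the diagonal, and stationary phase at those critical points collapses the kernel to a standard pseudodifferential kernel. The crucial observation is that the Hessian and Maslov factors produced by stationary phase do not depend on $A$ and are exactly those appearing in the analogous computation for $V_{h}U_{h}$; this is precisely why they package together as the scalar principal symbol $\chi$ in the final answer, rather than producing any additional dependence on $a$.
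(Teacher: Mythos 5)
The paper does not supply its own proof of this theorem: it is stated and then cited to H\"ormander (Theorem 25.3.5 in \cite{H3}), Lerner, and Zworski. Your argument --- viewing $V_hAU_h$ as a transversal composition of Fourier integral operators whose canonical relations compose to $G^{-1}\circ\mathrm{diag}\circ G=\mathrm{diag}$, hence a pseudodifferential operator, and then reading off the principal symbol by multiplicativity, with the extra factor relative to $V_hU_h$ being exactly the scalar $a\circ\varsigma=\varsigma^{*}a$ --- is precisely the standard composition-of-FIOs proof in those references, and it is correct; the point you highlight, that the Hessian and Maslov contributions from stationary phase are determined entirely by the phases (hence by $G$ and $G^{-1}$, not by $a$) and therefore reassemble into $\chi$, is the right justification for the factorization $\sigma(V_hAU_h)=\chi\,\varsigma^{*}a$. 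One incidental remark: since $\varsigma$ maps a neighborhood $V\subset T^{*}\R^{m}$ to a neighborhood of $\Gamma(I)\subset T^{*}\widehat{M}$, the pullback $\varsigma^{*}a$ lives on $T^{*}\R^{m}$, so the conjugated operator $V_hAU_h$ is pseudodifferential on $\R^{m}$ (the domain $X=\pi_1(V)$) rather than on $\widehat{M}$ as the paper's wording suggests; your use of $\varsigma^{*}$ implicitly uses the correct reading, and this is a misprint in the statement, not a gap in your proof.
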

In the classical setting, this is Theorem 25.3.5 in \cite{H3}, for semiclassical versions one can refer to \cite[Theorem 4.7.8]{Ler} and \cite[Theorem 11.5]{Zworski}.

\subsection{Semiclassical defect measures} 

It is time to introduce the notion of semiclassical defect measures which our introduction evoked and which lifts the measure used in our proofs to the cotangent bundle. Let $(M,g)$ be a compact Riemannian manifold with boundary. We refer to \cite{Bu} for a survey on semiclassical measures and to \cite[Section 5.3]{Zworski}.

\begin{definition}
     Let $(v_j)_{j=1}^{\infty}$ be a bounded sequence of $L^2$ functions on $M$ and $(h_j)_{j=1}^{\infty}$ a sequence of reals in $(0,1]$ (called a sequence of scales) converging to $0$. There exist subsequences $(v_{j_k})_{k=1}^{\infty}$, $(h_{j_k})_{k=1}^{\infty}$ and a positive Radon measure $\mu$ on $T^*M^{\rm int}$  such that for all $a \in C^{\infty}_c(T^*M^{\rm int})$
     \begin{align*}
          \lim_{k \to \infty} \int_M A_{h_{j_k}}v_{j_k} \, \overline{v_{j_k}} \, d V= \int_{T^*M} a \, d\mu
     \end{align*} 
     where $A_{h_{j_k}}$ is a semiclassical pseudodifferential operator with principal symbol $a$ and parameter $h_{j_k}$. Such a measure is called a semiclassical defect measure associated to the sequences $(v_j)_{j=1}^{\infty}$ and $(h_j)_{j=1}^{\infty}$.
\end{definition}
We are interested in the semiclassical defect measures associated with our family of quasimodes $(v_{s_j})_{j=1}^{\infty}$ for a sequence $s_j = h_j^{-1}+i\lambda$
with $(h_j)_{j=1}^{\infty}$ converging to $0$
\begin{align}
\label{QuasimodeSeq}
\norm{(-h^2_j\Delta_{g} - (1+ i \lambda h_j)^2) v_j}_{L^2(M)} = o(h_j), \quad \norm{v_j}_{L^2(M)} = O(1)
\end{align}
as $j \to \infty$.
\begin{lemma}
      All semiclassical measures associated to the sequence of quasimodes \eqref{QuasimodeSeq} are supported in the cosphere bundle $S^*M^{\rm int}$.
\end{lemma}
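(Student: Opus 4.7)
The plan is to invoke the standard principle that a semiclassical defect measure of an approximate null-solution of $P_h$ is supported on the characteristic set of the semiclassical principal symbol. Writing
\[
P_h = -h^2 \Delta_g - (1 + i\lambda h)^2,
\]
the semiclassical principal symbol is $p(x,\xi) = \abs{\xi}_g^2 - 1$, since the expansion $(1+i\lambda h)^2 = 1 + 2i\lambda h - \lambda^2 h^2$ contributes only subprincipal terms. The characteristic set $\{p=0\} \cap T^*M^{\rm int}$ is precisely $S^*M^{\rm int}$, so proving the lemma reduces to showing that $\mu$ places no mass off this set.

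To implement this, I will fix $a \in C^{\infty}_c(T^*M^{\rm int})$ with $\supp a \cap S^*M^{\rm int} = \emptyset$ and show $\int a\, d\mu = 0$. On the compact set $\supp a$ the function $p$ is continuous and nonvanishing, hence bounded away from zero, so $b := a/p$ lies in $C^{\infty}_c(T^*M^{\rm int})$. After embedding $M$ into a closed manifold $\widehat{M}$ so that the calculus of Section \ref{sec_microlocal} applies, I quantize $B_h := \op{h} b \in \Psi^{-\infty}_{\rm scl}(\widehat{M})$. The semiclassical composition formula together with the expansion of $(1+i\lambda h)^2$ produces
\[
B_h P_h = A_h + h R_h,
\]
where $A_h = \op{h} a$ and $R_h \in \Psi^0_{\rm scl}(\widehat{M})$ is uniformly bounded on $L^2$; the remainder $hR_h$ absorbs both the $O(h)$ error from the composition and the $-2i\lambda h + \lambda^2 h^2$ piece of $P_h$.

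The conclusion follows by testing against $v_j$. Using $\norm{P_{h_j} v_j}_{L^2} = o(h_j)$, $\norm{v_j}_{L^2} = O(1)$, and the uniform $L^2$-boundedness of $B_{h_j}$ and $R_{h_j}$,
\[
\abs{\bra A_{h_j} v_j, v_j \cet} \leq \norm{B_{h_j}} \norm{P_{h_j} v_j}_{L^2} \norm{v_j}_{L^2} + h_j \norm{R_{h_j}} \norm{v_j}_{L^2}^2 = O(h_j),
\]
which tends to $0$. Passing to the subsequence defining $\mu$ gives $\int a\, d\mu = 0$, and hence $\supp \mu \subset S^*M^{\rm int}$. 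I do not expect a genuine obstacle: this is a routine application of the support-on-characteristic-set principle for defect measures, and the slightly complex frequency shift $i\lambda$ is harmless because it contributes only at the subprincipal level, where it is readily swept into the $hR_h$ correction and dominated by the $o(h_j)$ quality of the quasimode remainder.
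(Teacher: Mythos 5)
Your argument is correct and amounts to spelling out the proof of the standard support-on-the-characteristic-set result (Zworski, Theorem 5.3) that the paper simply cites, observing as the paper does that $-2i\lambda h+\lambda^2h^2$ is subprincipal. The only small omission is the interior cutoff $\psi\in C^\infty_c(M^{\rm int})$ with $\psi=1$ on the projection of $\supp a$ needed so that $\op{h}(a/p)$ and its composition with $P_h$ are applied to a compactly supported function when one works on the closed extension $\widehat M$; the paper handles this in the companion transport-equation lemma and it is a routine fix here as well.
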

This is a consequence of Theorem 5.3 in \cite{Zworski} since the semiclassical principal symbol of $-h^2\Delta_g-(1+i\lambda h)^2$ is $|\xi|^2_g-1$. The adaptation to the manifold case is straightforward.
\begin{lemma}
     All semiclassical measures associated to the sequence of quasimodes \eqref{QuasimodeSeq} satisfy the following transport equation
     \begin{align*}
           {}^t(H_{p})\mu = 4 \lambda \mu
     \end{align*}
     where $H_p$ is the Hamiltonian vector field of the symbol $p(x,\xi)=|\xi|^2_g=g^{jk}(x) \xi_j\xi_k$.
\end{lemma}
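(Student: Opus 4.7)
The plan is to test the proposed transport equation against an arbitrary real $a \in C^\infty_c(T^*M^{\rm int})$ and derive the equivalent scalar identity $\int_{T^*M} H_p a \, d\mu = 4\lambda \int_{T^*M} a \, d\mu$. Under the natural pairing convention $\langle {}^t H_p \mu, a\rangle = \langle \mu, H_p a\rangle$, this is exactly the asserted ${}^t(H_p)\mu = 4\lambda \mu$. To this end I would evaluate the normalised commutator
\begin{equation*}
\tfrac{1}{i h_{j_k}}\,\langle [P_{h_{j_k}},A_{h_{j_k}}]\, v_{j_k},\, v_{j_k}\rangle,\qquad P_h := -h^2 \Delta_g - (1+i\lambda h)^2,
\end{equation*}
in two different ways along the subsequence $(v_{j_k},h_{j_k})$ realising $\mu$, where $A_h := \op{h}(a)$ is the (self-adjoint) Weyl quantization of $a$.

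On the pseudodifferential side, the principal symbol of $P_h$ is $p_0 := |\xi|^2_g - 1$, and the imaginary subprincipal contribution $-2i\lambda$ is scalar and therefore commutes with $A_h$. The standard Weyl--Moyal expansion then identifies $\tfrac{1}{ih}[P_h,A_h]$ as a pseudodifferential operator with principal symbol $-H_{p_0} a = -H_p a$ (adding a constant to $p_0$ does not change the Hamiltonian field), and the definition of semiclassical defect measure yields the limit $-\int H_p a\, d\mu$.

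On the analytic side I would use the algebraic identity $P_h^{\,*} - P_h = 4 i \lambda h$, which is immediate from $\overline{(1+i\lambda h)^2} = (1-i\lambda h)^2$ (here the reality of $\lambda$ is essential) together with the self-adjointness of $\Delta_g$. Expanding
\begin{equation*}
\langle [P_h, A_h]v, v\rangle \;=\; \langle A_h v, P_h^{\,*} v\rangle - \langle P_h v, A_h v\rangle,
\end{equation*}
substituting $P_h^{\,*} = P_h + 4 i \lambda h$ and using $A_h = A_h^{\,*}$, the commutator collapses to $2 i \,\mathrm{Im}\,\langle A_h v, P_h v\rangle - 4 i \lambda h\,\langle A_h v, v\rangle$. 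Cauchy--Schwarz combined with the quasimode hypothesis $\|P_{h_j} v_j\|_{L^2} = o(h_j)$ and the uniform $L^2$-boundedness of $A_h$ force the first term to be $o(h_j)$; dividing by $i h_j$ and passing to the limit produces $-4\lambda \int a\, d\mu$.

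Equating the two limits gives the scalar identity and hence the transport equation. The only delicate bookkeeping I expect is in the sign conventions of the Weyl--Moyal calculus and of the adjoint computation; choosing a real test symbol and the Weyl (as opposed to standard) quantization keeps the subprincipal data of $P_h$ out of the leading-order transport term. Since $a$ is compactly supported in $T^*M^{\rm int}$, the presence of the boundary $\partial M$ plays no role in the argument.
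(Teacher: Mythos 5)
Your argument is correct and follows essentially the same commutator strategy as the paper: test against a real $a \in C^\infty_c(T^*M^{\rm int})$, evaluate the normalized commutator $\frac{1}{ih}\langle[\cdot,A_h]v,v\rangle$ via symbol calculus on one side and via the quasimode relation plus self-adjointness of $A_h$ on the other, and equate the limits. The only bookkeeping difference is that you commute $A_h$ with the full operator $P_h$ and extract the coefficient $4\lambda$ from the adjoint identity $P_h^*-P_h = 4i\lambda h$, whereas the paper commutes $A_h$ with $-h^2\Delta_g$ alone, substitutes the quasimode relation $-h^2\Delta_g v_j \approx (1+i\lambda h)^2 v_j + o(h)$, and reads off $4\lambda$ from $\operatorname{Im}\big((1+i\lambda h)^2\big) = 2\lambda h$ together with the reality of $(A_h v_j, v_j)$; since $(1+i\lambda h)^2$ is a scalar these two routes are equivalent.

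One point you dismiss too quickly: the adjoint manipulation $\langle P_hA_hv,v\rangle = \langle A_hv, P_h^*v\rangle$ on a manifold with boundary needs the Green's identity boundary terms to vanish, and $v_j$ itself carries no boundary condition and is not supported away from $\partial M$. The paper handles this by inserting a cutoff $\psi\in C^\infty_c(M^{\rm int})$ equal to one on the base projection of $\operatorname{supp} a$ and computing the commutator against $\psi v_j$, which agrees with the computation against $v_j$ up to $O(h^\infty)$ precisely because $a$ is compactly supported in $T^*M^{\rm int}$. Your remark that the boundary ``plays no role'' is ultimately correct for the same reason, but this cutoff (or the observation that $A_h v_j$ is, up to $O(h^\infty)$, supported in the interior) should be made explicit rather than left implicit.
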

\begin{proof}
     Let $a \in C^{\infty}_c(T^*M^{\rm int})$ be real-valued, and choose $\psi \in C^{\infty}_c(M^{\rm int})$ which equals one on the projection of $\supp \, a$ on $M^{\rm int}$. Since $a$ is real valued, the pseudodifferential operator $A_{h_j}=\op{h_j}a$ is self-adjoint and we have 
     \begin{align}
     \label{CommutEq}
          \frac{1}{ih_j} \big( [A_{h_j},-h_j^2\Delta_g] \psi v_j, \psi v_j \big)_{L^2(M)} &= \frac{2}{h_j} \im \big( -h_j^2\Delta_g (\psi v_j),A_{h_j}(\psi v_j) \big)_{L^2(M)} \nonumber \\ &= \frac{2}{h_j} \im \big( -h_j^2\Delta_g v_j,A_{h_j} v_j \big)_{L^2(M)} \nonumber + O(h_j^{\infty}) \nonumber \\ &= 4 \lambda \big( A_{h_j}v_j,v_j \big)_{L^2(M)} + o(1).
     \end{align}
     The principal symbol of the commutator $ih_j^{-1}  [A,-h_j^2\Delta_g]$ is the Poisson bracket $\big\{a,|\xi|^2_g\big\}=-H_{p}a$ therefore the left-hand side term equals
         $$ \big( \op{h_j} (H_{p}a)v_j,v_j\big)_{L^2} + O(h_j) $$ 
         Passing to the limit in the equality \eqref{CommutEq}, we finally get
         $$ \int_{T^*M} (H_{p}a) \, d \mu = 4\lambda \int_{T^*M} a \, d \mu $$
         which proves the claim.
\end{proof}
\begin{remark}
       If we were considering the semiclassical defect measure $\widehat{\mu}$ associated with quasimodes on a closed manifold $\widehat{M}$ then the transport equation would imply $\widehat{\phi}_t^*\widehat{\mu} = e^{2\lambda t} \widehat{\mu}$ where $\widehat{\phi}_t$ denotes the cogeodesic flow on $(\widehat{M},g)$.
\end{remark}

\subsection{Microlocal quasimodes} 

As in the previous section, to simplify notations we will write $(M,g)$ for the transversal manifold instead of $(M_0,g_0)$. Thus, let $(M,g)$ be an $m$-dimensional compact oriented manifold with smooth boundary and let $\gamma$ be a non-tangential geodesic. Once again, we embed $(M,g)$ in some closed manifold $(\widehat{M},g)$, extend $\gamma$ as a unit speed geodesic in $\widehat{M}$ and let $\eps>0$ be such that  $\gamma(t) \in \widehat{M} \smallsetminus M$ for $t \in [-2\eps,0) \cup (L,L+2\eps]$. We recall that $h=\tau^{-1}$ is our semiclassical parameter. After factorization of the operator 
\begin{align*}
     \Delta_g+(\tau+ i \lambda)^2 &= \tau^2 (h^2\Delta_g + (1+i \lambda h)^2) \\
     &= -\tau^2 \Big(\sqrt{-h^2\Delta_g}+1+i\lambda h\Big)\Big(\sqrt{-h^2\Delta_g}-1-i\lambda h\Big)
\end{align*}
it becomes clear that one has to seek $v_s$ such that 
    $$ \Big\|\Big(\sqrt{-h^2\Delta_g}-1-i\lambda h\Big)v_s\Big\|_{H^1_{\rm scl}(M)} = O(h^{K+2}), \quad s=h^{-1}+i\lambda. $$
In fact, we will construct  an $O(h^{\infty})$ quasimode.
First, we will need the following proposition from \cite{H3}, which is a global version of the microlocal canonical reduction of a principal type operator.
\begin{prop}
\label{HormNormProp}
      Let $I$ be a compact real interval and $\Gamma : I \to S^*\widehat{M}$ be a non-closed cogeodesic curve, let $\eps_m = (0,\dots,0,1) \in \R^m$, one can find a neighborhood $V$ of the segment $\Lambda=\{((x_1,0),\eps_m) :  x_1 \in I \} \subset \R^{2m}$ and a smooth canonical transformation $\varsigma : V \to \varsigma(V)$ from $V$ to an open neighborhood $\varsigma(V)$ of $\Gamma(I)$ such that
      $$ \varsigma((x_1,0),\eps_m) = \Gamma(x_1), \quad \varsigma^*(\sqrt{g^{-1}}-1)(x,\xi) = \xi_1. $$
\end{prop}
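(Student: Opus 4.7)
\smallskip

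\noindent\textbf{Plan.} This is a standard symplectic normal-form result for real principal-type symbols along a non-closed bicharacteristic; the only nontrivial point is the globalization along $\Gamma(I)$. The strategy mirrors H\"ormander's proof of the analogous statement for real principal-type operators (e.g.\ H\"ormander vol.~IV, Theorem 26.1.6). The symbol $q(x,\xi) := \sqrt{g^{-1}(x,\xi)} - 1 = |\xi|_g - 1$ is smooth off the zero section, real, and of principal type, and its Hamiltonian vector field $H_q$ is (up to reparameterization) the cogeodesic vector field on $S^*\widehat{M} = \{q=0\}$. By relabeling we may assume $a=0 \in I$; set $\rho_0 = \Gamma(0) \in S^*\widehat{M}$.

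\smallskip

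\noindent\textbf{Pointwise normal form.} First I would construct the transformation in a neighborhood of $\rho_0$. The hypersurface $S^*\widehat{M}$ is coisotropic and its characteristic foliation is the foliation by cogeodesics. Pick a codimension-two symplectic submanifold $\Sigma_0 \subset S^*\widehat{M}$ through $\rho_0$ transverse to $H_q$ at $\rho_0$, and (by Darboux's theorem) a symplectic parametrization $\sigma:(y',\eta')\mapsto \Sigma_0$ defined for $(y',\eta')$ near $(0,\eps_{m-1})$, with $\sigma(0,\eps_{m-1})=\rho_0$, where $\eps_{m-1}=(0,\dots,0,1)\in\mR^{m-1}$. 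Next, choose a smooth extension of $\sigma$ off the characteristic set: pick a vector field $\nu$ defined near $\rho_0$ with $\nu(q)=1$ and which is tangent to a Lagrangian complement of $T\Sigma_0$ in $T(T^*\widehat{M})|_{\Sigma_0}$, and set
\[
W_0(\xi_1,y',\eta') \;=\; \phi^{\nu}_{\xi_1}\bigl(\sigma(y',\eta')\bigr).
\]
By the Darboux--Weinstein construction, $\nu$ can be chosen so that $W_0$ is a local symplectomorphism from a neighborhood of $\{\xi_1=0\}\times\{(0,\eps_{m-1})\}$ onto a neighborhood of $\rho_0$ in $T^*\widehat{M}$, and by construction $q\circ W_0 = \xi_1$.

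\smallskip

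\noindent\textbf{Globalization along $\Gamma$.} I would then extend by flowing along $H_q$:
\[
\varsigma\bigl((x_1,y'),(\xi_1,\eta')\bigr) \;=\; \phi^{H_q}_{x_1}\bigl(W_0(\xi_1,y',\eta')\bigr).
\]
Because the flow of $H_q$ preserves both $\omega$ and $q$, and $W_0$ is canonical with $q\circ W_0=\xi_1$, this $\varsigma$ is automatically canonical and satisfies $q\circ\varsigma=\xi_1$. By the choice of parametrization on $\Sigma_0$, restricting to $(y',\eta')=(0,\eps_{m-1})$ and $\xi_1=0$ yields $\varsigma((x_1,0),\eps_m)=\phi^{H_q}_{x_1}(\rho_0)$, which is $\Gamma(x_1)$ after matching the parametrization of the cogeodesic with the $H_q$-flow (a matter of rescaling $x_1$, since $H_q$ restricted to $S^*\widehat{M}$ is a constant multiple of the unit-speed cogeodesic generator).

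\smallskip

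\noindent\textbf{Main obstacle.} The delicate point is that $\varsigma$, initially defined on a neighborhood of $\Lambda$ by the flow formula, must be a \emph{diffeomorphism} onto an open subset of $T^*\widehat{M}$. This is precisely where the hypothesis that $\Gamma$ is non-closed is used: since $\Gamma(I)$ is an embedded (compact) arc in $T^*\widehat{M}$, the flow-out of a sufficiently small transverse neighborhood of $\rho_0$ over the time interval $I$ does not self-intersect, so $\varsigma$ is injective on a small enough neighborhood $V$ of $\Lambda$ and its image $\varsigma(V)$ is an open neighborhood of $\Gamma(I)$. If $\Gamma$ were closed there would be a holonomy obstruction to globalization, but this is excluded by assumption. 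The remaining verifications (that $W_0$ is canonical with the indicated choice of $\nu$, and that $\varsigma$ sends $((x_1,0),\eps_m)$ to $\Gamma(x_1)$ with the correct parametrization) are routine given the Darboux setup.
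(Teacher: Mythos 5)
Your overall architecture matches the paper's: obtain a canonical normal form near $\Gamma(0)$ via a Darboux-type argument, then globalize by flowing along $H_q$, using the non-closed hypothesis to guarantee that the flow-out is an embedding onto a tubular neighborhood of $\Gamma(I)$. The ``automatically canonical'' claim for the flow extension is also sound in principle: since $\varsigma_*\partial_{x_1}=H_q\circ\varsigma$, one has $\mathcal{L}_{\partial_{x_1}}(\varsigma^*\omega)=\varsigma^*(\mathcal{L}_{H_q}\omega)=0$ and $\iota_{\partial_{x_1}}\varsigma^*\omega=\varsigma^*(dq)=d(q\circ\varsigma)=d\xi_1$, so $\varsigma^*\omega$ equals the standard form as soon as it does so on the initial hypersurface $\{x_1=0\}$. (The paper verifies the equivalent fact via the Jacobi identity, showing the Poisson brackets of the components of $\varsigma^{-1}$ are constant along $H_a$-trajectories.)

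The gap is in the local step, which must supply that initial condition. The map $W_0(\xi_1,y',\eta')=\phi^\nu_{\xi_1}(\sigma(y',\eta'))$ has a $(2m-1)$-dimensional source, so it cannot be a ``local symplectomorphism'' onto a neighborhood of $\rho_0$; what is actually needed is that the embedded hypersurface $(\xi_1,y',\eta')\mapsto W_0(\xi_1,y',\eta')$ pulls $\omega$ back to $dy'\wedge d\eta'$. This does not follow from requiring only that $\nu(q)=1$ and that $\nu$ lie in a Lagrangian complement of $T\Sigma_0$ \emph{at points of $\Sigma_0$}: for $\xi_1\neq 0$ the cross term $\omega(\nu,\,D\phi^\nu_{\xi_1}D\sigma\,\partial_{y'})$ need not vanish unless $\phi^\nu$ preserves $\omega$, i.e.\ $\nu$ is Hamiltonian (equivalently, one works in adapted Darboux--Weinstein coordinates in which $\nu$ is a coordinate vector field), and this extra structure must be built in. The paper avoids this bookkeeping by invoking the classical Darboux theorem to complete $a=\sqrt{g^{-1}}-1$ into a full system of symplectic coordinates near $\Gamma(0)$, producing at once a genuine canonical transformation $\chi$ on a $2m$-dimensional neighborhood, and then extending that by the $H_a$-flow.
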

This is (a non-homogeneous version of) Proposition 26.1.6 in \cite{H3} applied to the symbol $a=\sqrt{g^{jk}(x)\xi_j\xi_k}-1$.  The proof follows H\"ormander's book and we  provide it in the non-homogeneous case.
Note that bicharacteristic curves of $a$ are cogeodesic in the cosphere bundle.
\begin{proof}
     Assume that $0 \in I$. Using local coordinates $(y,\eta)$ in a neighborhood $W_{\Gamma(0)}$ of $\Gamma(0) \in S^*\widehat{M}$, one can complete
      $$ \xi_1 = a(y,\eta) = \sqrt{g(y,\eta)}-1, \quad (y,\eta) \in W_{\Gamma(0)} $$
    into a system $(x,\xi)$ of symplectic coordinates near $(0,\eps_m)$ by the Darboux theorem.    
    The map
    \begin{align*}
         \chi : V_{(0,\eps_m)} &\to W_{\Gamma(0)} \\ (x,\xi) &\mapsto (y,\eta) 
    \end{align*}
    is a canonical relation from a neighborhood $V_{(0,\eps_m)}$ of $(0,\eps_m)$ in $T^*\R^m$ to a neighborhood $W_{\Gamma(0)}$ of $\Gamma(0)$ in $T^*\widehat{M}$ such that
         $$ \chi^*a =\xi_1, \quad \chi(0,\eps_m)=\Gamma(0). $$
     Note that 
         $$ \chi((x_1,0),\eps_m) = \Gamma(x_1) $$
     for all $x_1$ close to $0$ since both functions satisfy the same system of ordinary differential equations
     \begin{align}
     \label{Micro:SystODES}
          \frac{\partial \Gamma}{\partial x_1} &= H_a \circ \Gamma     \nonumber \\
          \frac{\partial \chi}{\partial x_1}((x_1,0),\eps_m) &= \{\xi_1,\chi((x_1,0),\eps_m)\} 
          = H_a \circ \chi((x_1,0),\eps_m)
     \end{align}
     with same initial condition.
     In order to prove the lemma, it suffices to extend $\chi$ to a neighborhood $V$ of the segment $\Lambda$, by taking the maximal solution $\varsigma$ of the system of differential equations
      \begin{align}
      \label{Micro:ExtODE}
      \begin{cases}
           \displaystyle
           \frac{\partial \varsigma}{\partial x_1} = H_a \circ \varsigma \\
           \varsigma(0,x',\xi) = \chi(0,x',\xi).
       \end{cases}
      \end{align}
      Note that $x_1 \to \varsigma(x,\xi)$ are bicharacteristic curves of $a$, hence cogeodesic curves in the cosphere bundle flowing through $V$. Therefore it follows from the assumption that $\Gamma$ is not closed that the extension $\varsigma$ is a diffeomorphism if $V$ is small enough. 
      Furthermore, it is a canonical transformation: if $(X,\Xi) = \varsigma^{-1}(y,\eta)$ then we have
      \begin{align*}
            H_a X_1 =1, \quad H_aX_j = 0 \quad \text{for }j=2,\dots,m,\quad H_a\Xi_j = 0 \quad \text{for } j=1,\dots,m,
      \end{align*}       
      from which we deduce by the Jacobi identity
      \begin{align*}
            H_a\{X_j,X_k\} = H_a\{\Xi_j,\Xi_k\} = H_a\{X_j,\Xi_k\}=0
      \end{align*}    
      and therefore these Poisson brackets are constant along integral curves of $H_p$, in particular
     \begin{align*}
            \{X_j,X_k\} =\{x_j,x_k\}=0, \;  \{\Xi_j,\Xi_k\}=\{\xi_j,\xi_k\} = 0, \; \{X_j,\Xi_k\}=\{x_j,\xi_k\} = -\delta_{jk}.
      \end{align*}  
      Since $\varsigma$ extends $\chi$ we have
            $$ \varsigma((x_1,0),\eps_m) = \chi((x_1,0),\eps_m) = \Gamma(x_1) $$
      and since $\varsigma$ is a solution of \eqref{Micro:ExtODE} we also get
            $$ a(\varsigma(x,\xi)) = a(\varsigma(0,x',\xi)) = a(\chi(0,x',\xi')) = \xi_1. $$
      This completes the construction of the canonical transformation $\varsigma$.
\end{proof}
Our choice for $\Gamma$ is the cogeodesic curve in $S^*\widehat{M}$ which projects on the geodesic $\gamma$ in $\widehat{M}$ and we take $I=[-\eps,L+\eps]$. The next step is the quantization of such a canonical transformation.
\begin{prop}
     Let $\Gamma : I \to S^*\widehat{M}$ be a non-closed cogeodesic curve, and let $\varsigma$ be the canonical transformation introduced in Proposition \ref{HormNormProp}. For all $\lambda \in \R$, there exist semiclassical Fourier integral operators $U_h \in I^0_{\rm scl}(\R^m \times \widehat{M},G')$, $V_h \in I^0_{\rm scl}(\widehat{M}\times \R^m,(G^{-1})')$ associated with the graphs $G$, resp. $G^{-1}$, of the canonical transformation $\varsigma$, resp. $\varsigma^{-1}$, such that $\WF(U_hV_h-1)$, $\WF(V_hU_h-1)$ do not intersect $\Gamma(I)$, resp. $\Lambda$, and such that
    \begin{align*}
         V_h \Big(\sqrt{-h^2\Delta_g}-1-i\lambda h\Big) U_h &= (hD_1-i\lambda h) + R_h + A_h
    \end{align*}
where $R_h \in h^{\infty}\Psi^{-\infty}_{\rm scl}(\widehat{M})$ and $A_h \in \Psi^{0}_{\rm scl}(\widehat{M})$ is such that $\WF(A_h) \cap \Lambda = \varnothing$. 
\end{prop}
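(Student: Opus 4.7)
The plan is to construct $U_h$ as a semiclassical Fourier integral operator whose amplitude is determined by an iterative system of transport equations along the bicharacteristics of $\xi_1$, then to recover $V_h$ as a microlocal parametrix and extract the claimed normal form by composing from the left.

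First, taking a generating function $\varphi(x,\xi)$ of $\varsigma$ on a neighborhood of $\Lambda$ (furnished, along with $\varsigma$ itself, by the Darboux-type construction in Proposition \ref{HormNormProp}), I would look for
\begin{align*}
     U_h u(x) = (2\pi h)^{-m} \int e^{i(\varphi(x,\xi) - y\cdot\xi)/h}\, a(x,\xi,h)\, u(y)\,dy\,d\xi
\end{align*}
with amplitude $a \sim \sum_{j\geq 0} h^j a_j$ supported in a small neighborhood of the projection of $\Lambda$. The target intertwining relation, modulo errors with wave front set disjoint from $\Lambda$, is
\begin{align*}
     \sqrt{-h^2\Delta_g}\, U_h \equiv U_h\,(hD_1 + 1),
\end{align*}
since once this is achieved the stated identity follows by composing from the left with a microlocal inverse $V_h$ and subtracting $1+i\lambda h$.

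This intertwining relation would be solved order by order in $h$. Expanding both sides by the stationary-phase / transport calculus for FIOs, the leading-order equation reduces, after pullback by $\varsigma$, to $\sqrt{g^{-1}}(\varsigma(x,\xi))\, a_0 = (1+\xi_1)\, a_0$, which holds automatically near $\Lambda$ thanks to the key property $\varsigma^*(\sqrt{g^{-1}}-1) = \xi_1$ from Proposition \ref{HormNormProp}; any nowhere-vanishing $a_0$ is admissible. At order $h^{k+1}$ one obtains a transport equation of the form
\begin{align*}
     \mathcal{L} a_{k+1} = F_k(a_0,\ldots,a_k),
\end{align*}
where $\mathcal{L}$ is a real vector field whose principal part is $\partial_{x_1}$ (the Hamiltonian vector field of $\xi_1$, pushed back to the base) and $F_k$ is a source built from the subprincipal symbol of $\sqrt{-h^2\Delta_g}$, the half-density transport terms generated by $\varphi$, and the previously determined $a_j$. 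Since $\Gamma$ is non-closed, $\Lambda$ is a compact arc with no periodicity constraint, so each transport equation is solvable in a neighborhood of $\Lambda$ by one-dimensional integration in $x_1$ followed by smooth extension in the transverse directions. A Borel summation produces a genuine amplitude $a$ realizing all the $a_j$ asymptotically, with intertwining error in $h^\infty \Psi^{-\infty}_{\rm scl}(\widehat{M})$ microlocally near $\Lambda$.

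Next I would construct $V_h \in I^0_{\rm scl}(\widehat{M}\times\R^m,(G^{-1})')$ as a microlocal parametrix for $U_h$: using the composition calculus for FIOs with inverse canonical relations, $V_h U_h$ and $U_h V_h$ are pseudodifferential, and a standard Neumann-series construction yields $V_h$ with $\WF(V_hU_h-1)\cap \Lambda = \varnothing$ and $\WF(U_hV_h-1)\cap \Gamma(I) = \varnothing$. Composing the intertwining identity on the left with $V_h$ and subtracting $1+i\lambda h$ gives the claimed decomposition, with $R_h \in h^\infty\Psi^{-\infty}_{\rm scl}$ and $A_h$ absorbing all error operators whose wave front set avoids $\Lambda$. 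The main obstacle is the bookkeeping of wave front sets across the iteration: one must ensure that at every order the corrections remain microlocally supported near $\Lambda$ and that the cumulative off-$\Lambda$ errors from cutoffs can be collected into a single $A_h$ without damaging the microlocal-inverse properties of the pair $(U_h,V_h)$. The non-closedness of $\Gamma$ is crucial precisely because it is what makes each one-dimensional transport equation $\mathcal{L} a_{k+1} = F_k$ solvable without a compatibility condition on the right-hand side.
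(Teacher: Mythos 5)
Your proposal is correct in substance but takes a genuinely different route from the paper's proof, so let me compare the two. You build $U_h$ by prescribing an oscillatory integral kernel with generating function $\varphi$ for $\varsigma$ and then solving transport equations for the amplitude $a\sim\sum h^j a_j$ directly, so that $\sqrt{-h^2\Delta_g}\,U_h \equiv U_h(hD_1+1)$ microlocally near $\Lambda$; you then take $V_h$ as a microlocal parametrix of $U_h$. The paper instead \emph{factors} the construction: it picks essentially arbitrary mutually inverse FIOs $U_h^0, V_h^0$ quantizing $\varsigma, \varsigma^{-1}$, invokes Egorov's theorem to get $V_h^0(\sqrt{-h^2\Delta_g}-1)U_h^0 = V_h^0U_h^0\,hD_1 + hR_h^0$ with $R_h^0\in\Psi^0_{\rm scl}$, and only then corrects the remainder by conjugating with elliptic pseudodifferential operators $U_h^1, V_h^1$ on the \emph{flat model} $\mR^m$. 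The transport hierarchy in the paper is thus the scalar system $\tfrac{1}{i}\partial_{x_1}a_j + r\,a_j = -r_{j-1}$, purely on $\mR^m$ with the normal-form operator $hD_1$, and it is solved by explicit quadrature. Your version instead produces the transport equations inside the FIO calculus, where the principal transport operator is $\partial_{x_1}$ only after accounting for the half-density/Maslov transport coming from $\varphi$ and the subprincipal symbol of $\sqrt{-h^2\Delta_g}$; in particular $\mathcal{L}$ is a first-order operator with a zeroth-order (complex) coefficient rather than a pure real vector field as you state, and $a_0$ is determined by the first transport equation rather than being freely chosen everywhere (only its Cauchy data at a fixed $x_1$ is free). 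These are harmless imprecisions; the mechanism is the same. The trade-off is that the paper's two-step approach isolates all geometric content in Egorov and makes the correction a clean scalar ODE system on the model, whereas your one-step approach is conceptually tighter but pushes the half-density and subprincipal bookkeeping into the FIO amplitude expansion. Both correctly use the non-closedness of $\Gamma$ to ensure the transport equations have no periodicity obstruction, and both correctly route all off-$\Lambda$ cutoff errors into $A_h$ with $\WF(A_h)\cap\Lambda=\varnothing$.
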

\begin{proof}
   There exist%
   \footnote{One chooses $U_h$ to be non-characteristic near $\Lambda \times \Gamma(I)$ and  the construction of $V_h$ is the standard construction of a parametrix (see Remark on bottom of page 27 after Definition 25.3.4 in \cite{H3} for the classical case, Theorem 11.5 in \cite{Zworski} for the semiclassical case).} semiclassical Fourier integral operators $U_h \in I^0_{\rm scl}(\R^m \times \widehat{M},G')$, $V_h \in I^0_{\rm scl}(\widehat{M}\times \R^m,(G^{-1})')$   such that $\WF(U^0_hV^0_h-1)$, $\WF(V^0_hU^0_h-1)$ do not intersect $\Gamma(I)$, resp. $\Lambda$. 
 By Egorov's theorem one has
       \begin{align*}
         V^0_h \Big(\sqrt{-h^2\Delta_g}-1\Big) U^0_h = V^0_hU^0_h hD_1 + hR^0_h
    \end{align*}
where $R^0_h \in \Psi^0_{\rm scl}(\widehat{M})$. It remains to improve the remainder 
$R_h^0$, and this can be done by further conjugation by two elliptic pseudodifferential operators $U_h^1,V^1_h \in \Psi^0_{\rm scl}(\R^m)$ such that $V^1_h U^1_h-1 \in  h^{\infty}\Psi^{-\infty}_{\rm scl}(\R^m)$  and 
    $$ [hD_ 1,U_h^1] + hR^0_hU_h^1  \in h^{\infty} \Psi^{-\infty}_{\rm scl}(\R^m).$$
    This can be done by choosing $U_h^1=\op{h} a$ where $a \sim \sum_{j \geq 0} h^{j}a_j$ is the asymptotic sum of a sequence of symbols $(a_j)_{j \geq 0}$ satisfying the recursive equations
        $$ \frac{1}{i}\partial_{x_1} a_j + r a_j = -r_{j-1} $$
where $r$ is the principal symbol of $R^0_h$, $r_{-1}=0$,  and $r_{j-1}$ is a principal symbol of the operator
     $$ R_j =h^{-j-2} \Big([hD_1,\op{h}(a_0+\cdots+h^j a_j)] + hR^0_h \op{h}(a_0+\cdots+h^j a_j)\Big) \in \Psi^{-j-1}_{\rm scl}. $$
     This sequence of equations can be explicitly solved and the solutions
     \begin{align*}
          a_0 &= \exp\bigg(\int_0^{x_1} r(y_1,x') \, dy_1\bigg) \\
          a_j &= -i a_0(x,\xi) \bigg(\int_0^{x_1} r_{j-1}(y_1,x_1) a_0^{-1}(y_1,x') \, dy_1\bigg)
     \end{align*}
     are symbols of order $j$.
    
    Taking $U_h=U_h^0U_h^1$ and $V_h=V_h^1V_h^0$ we finally get       
    \begin{align*}
         V_h \Big(\sqrt{-h^2\Delta_g}-1-i\lambda h\Big) U_h =  (hD_1-i\lambda h) + A_h+ R_h
    \end{align*}
    with 
    \begin{align*}
         R_h=V_h^1 ( [hD_ 1,U_h^1] + hR^0_hU^1_h)+(V^1_hU^1_h-1)(hD_1)  \in h^{\infty} \Psi^{-\infty}_{\rm scl}(\R^m)
    \end{align*}
    and where the remainder
    \begin{align*}
         A_h=V_h^1(V_h^0U_h^0-1)(hD_1)U^1_h+i\lambda h (1-V_hU_h)
    \end{align*}
    has a semiclassical wavefront set which does not meet $\Lambda$ because of the wavefront set properties of $U^0_h$ and $V^0_h$.
\end{proof}

Having reduced the operator, it is now easy to construct quasimodes for the simple normal form
$hD_1-i\lambda h$; in fact, we may as well choose a solution of the equation $(\partial_{1}+\lambda)v=0$, and take as our quasimode the function
\begin{align}
\label{Microquasimode}
     v_s = U_h(H(x_1) e^{-\lambda x_1}w_h(x')), \quad \|w_h\|_{L^2(\R^{m-1})}=O(1)
\end{align}
where $x=(x_1,x') \in \R^m$, $w_h$ is smooth and where $H$ is a smooth cutoff function supported in $[-2,+\infty)$ which equals $1$ on $[-1,+\infty)$ so that $H(x_1) e^{-\lambda x_1}w_h(x')$ is an $L^2$ function.
That this could indeed be a possible quasimode is a consequence of the following relation 
    \begin{multline*}
         U_hV_h \Big(\sqrt{-h^2\Delta_g}-1-i\lambda h\Big) v_s = U_h\underbrace{(hD_1-i\lambda h)(H(x_1)e^{-\lambda x_1}w_h(x'))}_{=-i (h\partial_{x_1}H)e^{-\lambda x_1}w_h } \\ + U_hR_h(H e^{-\lambda x_1}w_h(x')) + U_hA_h(H e^{-\lambda x_1}w_h(x'))
    \end{multline*}
which leads to the estimate    
     \begin{multline*}
        \Big\|(\op{h}\chi) \Big(\sqrt{-h^2\Delta_g}-1-i\lambda h\Big) v_s \Big\|_{H^1_{\rm scl}}  \\ \leq \|(\op{h}\chi) U_hA_h(H e^{-\lambda x_1}w_h(x'))\|_{H^1_{\rm scl}} + O(h^{\infty})
    \end{multline*}
when $\chi \in C^{\infty}_c(V)$ is a symbol which equals $1$ close to $\Gamma(I)$. It follows from      
 $$ \WF(U_hA_h(H e^{-\lambda x_1}w_h(x')) \cap \Gamma(I) \subset 
       (\varsigma(\WF(A_h)))' \cap \Gamma(I) = \varnothing $$
that we have
    \begin{align*}
         \|(\op{h}\chi) U_hA_h(He^{-\lambda x_1}w_h(x'))\|_{H^1_{\rm scl}} = O(h^{\infty}).
    \end{align*}

Since $\chi$ is localized in a neighbourhood of the cogeodesic $\Gamma$, we need an additional estimate away from $\Gamma(I)$; in order to have such an estimate, we must impose 
on our quasimode that its semiclassical wave front set be contained in $\Gamma(I)$. This means that we require
\begin{align}
\label{WFcond}
     \WF(w_h) = \{(0,\eps_m)\}.
\end{align}
\begin{lemma}
    The semiclassical wave front set of the quasimode $v_s$ given by \eqref{Microquasimode} with the microlocal constraint  \eqref{WFcond} is contained in the cogeodesic curve $\Gamma(I)$:
          $$ \WF(v_{h^{-1}+i\lambda}) \subset \Gamma(I). $$
\end{lemma}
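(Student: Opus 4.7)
The plan is to analyze $\WF(v_{h^{-1}+i\lambda})$ in two steps: first compute the semiclassical wave front set of the argument $H(x_1) e^{-\lambda x_1} w_h(x')$, and then push it forward through the Fourier integral operator $U_h$ using Lemma \ref{PropagationLemma} together with the explicit normal form of $\varsigma$ on $\Lambda$ recorded in Proposition \ref{HormNormProp}.

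For the first step, the function $H(x_1) e^{-\lambda x_1}$ is smooth and independent of $h$, so by the second example of semiclassical wave front sets recalled in the text,
$$\WF(H(x_1) e^{-\lambda x_1}) = \supp(H e^{-\lambda x_1}) \times \{0\} \subset T^*\R.$$
Combined with the microlocal constraint \eqref{WFcond}, namely $\WF(w_h) = \{(0,\eps_m)\} \subset T^*\R^{m-1}$, and the tensor product property stated in the preceding remark, this gives
$$\WF(H(x_1) e^{-\lambda x_1} w_h(x')) \subset \big\{ ((x_1,0), (0,\eps_m)) : x_1 \in \supp(H e^{-\lambda x_1}) \big\} \subset T^*\R^m.$$
This one-dimensional curve is precisely the extension of $\Lambda = \{((x_1,0),\eps_m) : x_1 \in I\}$ obtained by letting $x_1$ range over the support of $H$; in particular the only possible wave front directions have $\xi_1 = 0$ and $\xi' = \eps_m$.

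For the second step, Lemma \ref{PropagationLemma} applied to $U_h \in I^0_{\rm scl}(\R^m \times \widehat{M}, G')$ gives
$$\WF(v_{h^{-1}+i\lambda}) = \WF\bigl(U_h(H e^{-\lambda x_1} w_h)\bigr) \subset \varsigma\bigl( \WF(H e^{-\lambda x_1} w_h) \cap V \bigr).$$
Shrinking the neighborhood $V$ of $\Lambda$ in the construction of $\varsigma$ (narrowly in the $x_1$ direction), the intersection of the input wave front set with $V$ lies inside $\Lambda$ itself; and by Proposition \ref{HormNormProp} we have $\varsigma((x_1,0),\eps_m) = \Gamma(x_1)$ on $\Lambda$. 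Combining these,
$$\WF(v_{h^{-1}+i\lambda}) \subset \varsigma(\Lambda) = \Gamma(I),$$
which is the desired inclusion.

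The one genuinely delicate point is the tensor product step, in which one factor is $h$-independent while the other is a true $h$-dependent family. One verifies the needed inclusion directly from the semiclassical Fourier transform characterization of $\WF$: localizing by a smooth cutoff in $x_1$ and computing, $\mathcal{F}_h$ of the $h$-independent factor $H e^{-\lambda x_1}$ decays like $O(h^\infty)$ off $\xi_1 = 0$, forcing the $\xi_1$-component of any wave front point of the product to vanish, which is exactly the constraint that places the intersection with $V$ on $\Lambda$ and delivers $\Gamma(I)$ after one application of $\varsigma$.
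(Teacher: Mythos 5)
Your proof is correct and follows the same route as the paper's: first locate $\WF(H e^{-\lambda x_1} w_h)$ on the line $\{((x_1,0),\eps_m)\}$ via the two examples of semiclassical wave front sets and the tensor-product remark, then transport it by $\varsigma$ using Lemma~\ref{PropagationLemma}. Your extra unpacking of the tensor-product step (one $h$-independent factor, one genuine $h$-family) via the semiclassical Fourier transform characterization, and your remark about intersecting with $V$ before applying $\varsigma$, are welcome clarifications of details the paper leaves implicit, but they do not constitute a different argument.
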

\begin{proof}
     Let $\psi$ be a cutoff function. From the remark just after the definition of wavefront sets involving the semiclassical Fourier transform and from the example 4.1.2, we deduce that the semiclassical wave front set of $\psi(x) e^{-\lambda x_1}w_h(x')$ is contained in the line $\Lambda=\{(x_1,0,\eps_m) \in \R^{2m} : x_1 \in \R \}$. The lemma follows then from Lemma \ref{PropagationLemma} since the line $\Lambda$ is mapped into the cogeodesic $\Gamma$ by the canonical transformation $\varsigma$. 
\end{proof}
From the Lemma, we have $\WF\big(\big(\sqrt{-h^2\Delta_g}-1-i\lambda h\big)v_{h^{-1}+i\lambda}\big) \subset \Gamma(I)$ and since $1-\chi$ is supported away from $\Gamma(I)$, we deduce
    \begin{align*}
        \Big\| (1 - \op{h}\chi) \Big(\sqrt{-h^2\Delta_g}-1-i\lambda h\Big) v_s \Big\|_{H^1_{\rm scl}}  = O(h^{\infty})
    \end{align*}
Together with the previous estimate, this proves that $v_s$ is a quasimode.

Having constructed our quasimode, we proceed to the study of the corresponding semiclassical measure $\mu$. Let $a \in C^{\infty}_c(T^*M^{\rm int})$, we have
    \begin{multline}
    \label{Transfersemiclassical}
          \int_{M} (\op{h} a) v_{h^{-1}+i\lambda} \, \overline{v_{h^{-1}+i\lambda}} \, dV \\ = \int_{\R^{m}} U_h^*(\op{h} a) U_h (H(x_1) e^{-\lambda x_1} w_h(x')) \, \overline{H(x_1) e^{-\lambda x_1} w_h(x')} \, d x_1 \, dx'.
    \end{multline}
By Egorov theorem, the conjugated operator has a simple principal expression 
    \begin{align*}
          U_h^*(\op{h} a) U_h = \op{h}(\chi \varsigma^*a ) + hR_h
    \end{align*}
where $\chi$ is the principal symbol of $U_h^*U_h$ and $R_h \in \Psi^0_{\rm scl}$. We choose our function $w_h$ to be a wave packet
        $$ w_h =(\pi h)^{-\frac{m-1}{4}} e^{-\frac{1}{2h}|x'|^2+\frac{i}{h}x_m} $$
and in the construction of $U_h$, one can take $\chi$ to be one on $\Gamma(I)$. 
\begin{lemma}
   The semiclassical measure associated to the family 
        $$ \widetilde{v}_s = (\pi h)^{-\frac{m-1}{4}} H(x_1) e^{-\lambda x_1} e^{-\frac{1}{2h}|x'|^2+\frac{i}{h}x_m} $$
    is $H^2(x_1) e^{-2\lambda x_1} \, dx_1 \otimes \delta_{x'=0,\xi=\eps_m}$.
\end{lemma}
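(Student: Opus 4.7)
The plan is to test against an arbitrary $a \in C^\infty_c(T^*\R^m)$ and compute the limit of $\langle \op{h} a\, \widetilde{v}_s, \widetilde{v}_s\rangle_{L^2(\R^m)}$ directly. The key structural observation is that $\widetilde{v}_s(x_1, x') = \phi(x_1) \psi_h(x')$ is a tensor product of $\phi(x_1) = H(x_1) e^{-\lambda x_1}$ (smooth and $h$-independent) and $\psi_h(x') = (\pi h)^{-(m-1)/4} e^{-|x'|^2/(2h) + ix_m/h}$, which is the standard coherent state in $\R^{m-1}$ concentrated at $(0, \eps_m)$ (with $\eps_m$ the last unit vector of $\R^{m-1}$). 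Heuristically, the semiclassical measure of $\psi_h$ is $\delta_{(0,\eps_m)}$ on $T^*\R^{m-1}$, the semiclassical measure of $\phi$ is $|\phi|^2\,dx_1 \otimes \delta_{\xi_1 = 0}$ on $T^*\R$, and tensoring them yields the claimed product $H^2(x_1) e^{-2\lambda x_1}\,dx_1 \otimes \delta_{x'=0,\,\xi = \eps_m}$.

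To make this rigorous, I would insert the Weyl formula
\[ \op{h}(a) u(x) = (2\pi h)^{-m} \iint e^{i(x-y)\cdot\xi/h}\, a\bigl(\tfrac{x+y}{2}, \xi\bigr)\, u(y)\, dy\, d\xi \]
into the inner product and perform two matched rescalings. In the transverse directions, set $x' = \sqrt{h}\mu$, $y' = \sqrt{h}\eta$, and $\xi' = \eps_m + \sqrt{h}\kappa$; the Gaussians produced by $\psi_h\overline{\psi_h}$ become the $h$-free factor $e^{-(|\mu|^2+|\eta|^2)/2}$, the oscillations $e^{i(x_m-y_m)/h}$ combine with the Fourier kernel in $x'-y'$ into the bounded phase $e^{i(\mu-\eta)\cdot\kappa}$, and the symbol tends to $a(\tfrac{x_1+y_1}{2}, 0, \xi_1, \eps_m)$. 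In the longitudinal direction, set $y_1 = x_1 + hs$; this turns the $\xi_1$-integration into the partial Fourier transform $\hat{a}_1$ of $a$ in $\xi_1$ at the dual variable $s$, while $\phi(x_1 + hs)\overline{\phi(x_1)} \to H(x_1)^2 e^{-2\lambda x_1}$. A direct check shows all $h$-prefactors cancel.

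Dominated convergence applies, controlled by the Gaussian decay in $(\mu,\eta)$, the Schwartz decay of $\hat{a}_1$ in $s$, and the compact support of $a$. Taking $h \to 0$, integration in $\kappa$ produces $(2\pi)^{m-1}\delta(\mu-\eta)$, then $\int e^{-|\mu|^2}\,d\mu = \pi^{(m-1)/2}$, and Fourier inversion gives $\int \hat{a}_1(x_1, 0, s, \eps_m)\,ds = 2\pi a(x_1, 0, 0, \eps_m)$. The remaining constants collapse exactly to $1$, yielding
\[ \lim_{h \to 0} \int_{\R^m} \op{h}(a)\, \widetilde{v}_s\, \overline{\widetilde{v}_s}\, dx = \int_\R H(x_1)^2 e^{-2\lambda x_1}\, a(x_1, 0;\, 0, \eps_m)\, dx_1, \]
which is the pairing of $a$ with the claimed measure. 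The main technical point is justifying the exchange of limit and integral: this reduces to Taylor-expanding $a$ at $(\sqrt{h}(\mu+\eta)/2, \xi_1, \eps_m + \sqrt{h}\kappa)$ and $\phi$ at $x_1 + hs$ and absorbing the errors into the decay already present. No conceptual obstacle arises, since the argument is essentially the textbook coherent-state identity in the transverse directions tensored with the semiclassical symbol calculus fact that an $h$-independent smooth function localizes at $\xi = 0$ in the longitudinal direction.
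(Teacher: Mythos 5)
Your proof is correct and takes essentially the same route as the paper: insert the Weyl quantization formula into $\langle \op{h}(a)\widetilde{v}_s,\widetilde{v}_s\rangle$, rescale so the Gaussians and phases become $h$-independent, and pass to the limit. The paper's bookkeeping is a little cleaner — it changes to $(x+y)/2,(x-y)/2$ coordinates and evaluates the Gaussian integral in $(x'-y')/2$ explicitly \emph{before} letting $h\to 0$, producing the damping factor $e^{-\frac{1}{h}|\xi'-\eps'_m|^2}$ so that dominated convergence is immediate; this is precisely the rigorous version of your heuristic ``integration in $\kappa$ produces $(2\pi)^{m-1}\delta(\mu-\eta)$'' step (which, as written, cannot be justified directly by dominated convergence since compact support only confines $\kappa$ to a ball of radius $O(h^{-1/2})$), so the two computations coincide once that integral is carried out first.
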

\begin{proof}
     Let $a \in C^{\infty}_c(\R^{2m})$, we have 
     \begin{multline*}
           \big( (\op{h}a)\widetilde{v}_s,\widetilde{v}_s \big)_{L^2(\R^m)} =
            2^{-m} (\pi h)^{-\frac{3m-1}{2}} \iiint H(x_1)H(y_1) e^{-\lambda (x_1+y_1)} \\
            \times e^{-\frac{1}{2h}(|x'|^2+|y'|^2)} e^{\frac{i}{h}(x-y) \cdot (\xi-\eps_m)} a\bigg(\frac{x+y}{2},\xi\bigg)  \, d x \, dy  \, d \xi.  
     \end{multline*}
     We can take $(x+y)/2$ and $(x-y)/2$ as new coordinates and after integration, we get 
      \begin{multline*}
           \big( (\op{h}a)\widetilde{v}_s,\widetilde{v}_s \big)_{L^2(\R^m)} =
            (\pi h)^{-m}\iiint e^{-2\lambda x_1} H(x_1+y_1) H(x_1-y_1) \\ \times 
            e^{\frac{2i}{h} y_1 \xi_1} e^{-\frac{1}{h}(|x'|^2+|\xi'-\eps'_m|^2)}  a(x_1,x',\xi_1,\xi')  \, d x \, dy_1 \, d \xi.  
     \end{multline*}
     We let $h$ tend to $0$ and obtain
         $$  \lim_{h \to 0} \big( (\op{h}a)\widetilde{v}_s,\widetilde{v}_s \big)_{L^2(\R^m)} = \int_{-\infty}^{\infty} H^2(x_1) a(x_1,0,\eps_m) e^{-2\lambda x_1} \, dx_1 $$
     which completes the proof.
\end{proof}

Using the lemma, Egorov's theorem and passing to the limit in \eqref{Transfersemiclassical}, we get 
\begin{align*}
     \int_{T^*M} a \, d \mu &= \int_{-\eps}^{L+\eps} \chi(x_1,0,\eps_m) \varsigma^*a(x_1,0,\eps_m) e^{-2\lambda x_1} \, d x_1 \\ &=  \int_{0}^{L} a(\Gamma(x_1)) e^{-2\lambda x_1} \, d x_1, \quad a \in C^{\infty}_c(M^{\rm int})
\end{align*}
since $\chi$ equals one on $\Gamma(I)$. One can sum up our construction in the following theorem.
\begin{thm}
      For any non-tangential geodesic on a compact Riemannian manifold $(M_0,g_0)$ with boundary, there exists a family of quasimodes $(v_{h^{-1}+i\lambda})_{h \in (0,1)}$ such that
      \begin{align*}
            (h^2\Delta_{g_0}+(1+i\lambda h)^2)v_{h^{-1}+i\lambda} &= O(h^{\infty})  \\
            \|v_{h^{-1}+i\lambda}\|_{L^2(M_0)} &= O(1) 
      \end{align*}       
 with semiclassical wave front set contained in the cogeodesic $\Gamma$ projecting on $\gamma$ and with associated semiclassical measure $\mu$ on $M_0^{\rm int}$ given by
\begin{align*}
          \int_{T^*M_0} a \, d \mu =  \int_{0}^{L} a(\Gamma(x_1)) e^{-\lambda x_1} \, d x_1.
\end{align*}
\end{thm}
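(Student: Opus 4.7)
The plan is to assemble the various ingredients developed in this section into the proof of the theorem. First, I would embed $(M_0,g_0)$ in a closed manifold $(\widehat{M},g_0)$ and extend $\gamma$ to a unit speed geodesic defined on a slightly larger interval $I=[-\eps,L+\eps]$, taking $\Gamma$ to be the corresponding cogeodesic curve in $S^*\widehat{M}$. Then I would factor the operator as
$$
\Delta_{g_0}+(\tau+i\lambda)^2 = -\tau^2\bigl(\sqrt{-h^2\Delta_{g_0}}+1+i\lambda h\bigr)\bigl(\sqrt{-h^2\Delta_{g_0}}-1-i\lambda h\bigr),
$$
so that it suffices to construct $v_s$ with $(\sqrt{-h^2\Delta_{g_0}}-1-i\lambda h)v_s = O_{H^1_{\rm scl}}(h^{\infty})$. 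Since $\gamma$ is non-tangential and hence non-closed, Proposition \ref{HormNormProp} provides a canonical transformation $\varsigma$ reducing the symbol $\sqrt{g_0^{-1}}-1$ to $\xi_1$ along $\Gamma(I)$.

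Next, I would invoke the preceding proposition on the microlocal reduction to obtain Fourier integral operators $U_h$ and $V_h$ associated with the graphs of $\varsigma$ and $\varsigma^{-1}$ that are inverses to each other microlocally near $\Gamma(I)$ and $\Lambda$, and that satisfy
$$
V_h\bigl(\sqrt{-h^2\Delta_{g_0}}-1-i\lambda h\bigr)U_h = (hD_1 - i\lambda h) + R_h + A_h,
$$
with $R_h \in h^{\infty}\Psi^{-\infty}_{\rm scl}$ and $\WF(A_h)\cap\Lambda=\varnothing$. The quasimode is then defined by
$$
v_s = U_h\bigl(H(x_1)\,e^{-\lambda x_1} w_h(x')\bigr),
$$
where $H$ is a cutoff equal to $1$ on $[-1,\infty)$ and supported in $[-2,\infty)$, and $w_h$ is the coherent state $(\pi h)^{-(m-1)/4}e^{-|x'|^2/(2h)+ix_m/h}$, whose semiclassical wavefront set is the single point $(0,\eps_m)$.

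The $L^2$ bound follows because $U_h$ is bounded on $L^2$ and $\|H(x_1)e^{-\lambda x_1}w_h\|_{L^2(\R^m)}=O(1)$ on the support of $H$ multiplied by the unit-mass coherent state in $x'$. For the quasimode estimate, I would split by a microlocal cutoff $\op{h}\chi$ with $\chi=1$ near $\Gamma(I)$: on the support of $\chi$ the main term $(hD_1-i\lambda h)(H e^{-\lambda x_1}w_h) = -i(h\partial_1 H)e^{-\lambda x_1}w_h$ is $O(h^{\infty})$ since $\partial_1 H$ is supported in $[-2,-1]$, which lies outside $M_0$ after choosing $\eps$ small, while $R_h$ contributes $O(h^{\infty})$ directly and $U_h A_h$ contributes $O(h^{\infty})$ because $\WF(A_h)$ avoids $\Lambda$. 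Away from $\Gamma(I)$, the tensor product structure and the previous lemma show $\WF(v_s)\subset \varsigma(\Lambda)\subset\Gamma(I)$, so $(1-\op{h}\chi)(\sqrt{-h^2\Delta_{g_0}}-1-i\lambda h)v_s=O(h^{\infty})$ as well.

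Finally, the semiclassical measure is computed by conjugating any test symbol $a\in C^{\infty}_c(T^*M_0^{\rm int})$ through $U_h$: by Egorov's theorem, $U_h^*(\op{h}a)U_h = \op{h}(\chi\,\varsigma^*a) + hR_h$ with $\chi=1$ on $\Gamma(I)$. The explicit computation of the semiclassical measure of the model family $\widetilde{v}_s$ (in the preceding lemma) yields $H^2(x_1)e^{-2\lambda x_1}\,dx_1 \otimes \delta_{(0,\eps_m)}$, so passing to the limit gives
$$
\int_{T^*M_0} a\,d\mu = \int_0^L a(\Gamma(x_1))\,e^{-2\lambda x_1}\,dx_1.
$$
The main obstacle, which Proposition \ref{HormNormProp} and the microlocal conjugation proposition resolve, is the global construction and exact normal form of $\varsigma$ along the entire cogeodesic (self-intersections are handled because only the non-closedness matters for extending $\chi$ by the flow of $H_a$); everything else is a matter of combining Egorov, wavefront set propagation, and the explicit coherent state computation.
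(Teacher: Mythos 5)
Your proposal follows essentially the same approach as the paper: factor the operator, invoke the canonical normal form of Proposition \ref{HormNormProp} along the cogeodesic, conjugate by the Fourier integral operators $U_h,V_h$ to reduce to $hD_1-i\lambda h$, take a coherent state in the transverse variables with a one-sided cutoff in $x_1$, and compute the semiclassical measure via Egorov's theorem and the explicit wave-packet calculation. One small point worth flagging: your computation correctly yields the weight $e^{-2\lambda x_1}$ in the semiclassical measure, which agrees with the paper's own intermediate computation and with Theorem \ref{claim4}; the factor $e^{-\lambda x_1}$ appearing in the theorem's statement is evidently a typographical error in the paper.
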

From this alternate construction, one can also deduce Theorem \ref{claim4}.
%It suffices to construct the family of quasimodes $(v_s)$ on a slightly bigger compact manifold with boundary $(\widetilde{M_0},g_0)$ and using boundary determination extend $\widehat{q}(2\lambda,x')$ as a continuous function on $\widetilde{M_0}$ which vanishes outside $M_0$; $|v_s|^2 dV$ converges in the sense of distributions, hence in the weak topology of measures $\widetilde{M}_0^{\rm int}$, to the projection of the semiclassical measure on $\widetilde{M}_0^{\rm int}$, hence we obtain
%    $$ 0 = \lim_{h \to 0} \int_{\widetilde{M}_0} \widehat{q}(2\lambda,x') |v_s|^2 \, dV = \int_0^L \widehat{q}(2\lambda,\gamma(t)) e^{-2\lambda t} \, dt. $$
%Finally, one can proceed as in the end of the previous section to deduce that $\widehat{q}(2\lambda,x')=0$ from the injectivity of the geodesic ray transform.

\section{Calder{\'o}n problem in a cylinder} \label{sec_bcmethod_first}

In this section we will prove Theorems \ref{thm_calderon_cylinder_reconstruction} and \ref{thm_calderon_cylinder_uniqueness}, which consider an inverse problem in the infinite cylinder $T = \mR \times M_0$ with metric $g = e \oplus g_0$. Here $(M_0,g_0)$ is a compact oriented $m$-dimensional manifold with smooth boundary, $m \geq 2$. We write $(t,x)$ for coordinates on $T$ where $t$ is the Euclidean coordinate and $x$ are coordinates on $M_0$. The Laplace-Beltrami operator in $T$ is given by 
$$
\Delta = \Delta_g = \partial_t^2 + \Delta_{g_0}.
$$
We consider more generally the Schr\"odinger operator on $T$, 
$$
-\Delta + q_0 = -\partial_t^2 - \Delta_{g_0} + q_0
$$
where $q_0 \in C^{\infty}(M_0)$Ê   is real valued. It will be crucial that the coefficients $g_0$ and $q_0$Ê   are independent of the $t$ variable.

The first point is to set up boundary measurements related to the Dirichlet problem 
$$
(-\partial_t^2 - \Delta_{g_0} + q_0) u = 0 \text{ in } T, \quad u = h \text{ on } \partial T.
$$
The spectral properties of the Schr\"odinger operator in the infinite cylinder are different from those on a compact manifold because of the presence of continuous spectrum. Let $\lambda_1 \leq \lambda_2 \leq \cdots \to \infty$ be the Dirichlet eigenvalues of $-\Delta_{g_0}+q_0$Ê   in $M_0$, write $\text{Spec}(-\Delta_{g_0}+q_0) = \{ \lambda_1, \lambda_2, \ldots \}$, and let $\{ \phi_l \}_{l=1}^{\infty}$ be an orthonormal basis of $L^2(M_0)$Ê   consisting of eigenfunctions which satisfy $(-\Delta_{g_0} + q_0) \phi_l = \lambda_l \phi_l$ in $M_0$, $\phi_l \in H^1_0(M_0)$.

We next define certain function spaces. Let $L^2(T) = L^2(T,dV)$ be the standard $L^2$ space in $T$, and let $H^s(T)$ be the corresponding $L^2$ Sobolev spaces. Since $M_0$ is compact, we define 
\begin{gather*}
H^s_{\text{loc}}(T) = \{ f \,;\, f \in H^s([-R,R] \times M_0) \text{ for any $R > 0$} \}.
\end{gather*}
Writing $\br{t} = (1+t^2)^{1/2}$, we introduce for $s \geq 0$ the weighted spaces 
\begin{gather*}
L^2_{\delta}(T) = \{ f \in L^2_{\text{loc}}(T) \,;\, \br{t}^{\delta} f \in L^2(T) \}, \\
H^s_{\delta}(T) = \{ f \in H^s_{\text{loc}}(T) \,;\, \br{t}^{\delta} f \in H^s(T) \}, \\
H^1_{\delta,0}(T) = \{ f \in H^1_{\delta}(T) \,;\, f|_{\partial T} = 0 \}.
\end{gather*}
Also, $H^1_0(T) = \{ f \in H^1(T) \,;\, f|_{\partial T} = 0 \}$. We define, in the $L^2(T)$ duality, 
$$
H^{-1}(T) = (H^1_0(T))^*.
$$

If $s \geq 1/2$ define the abstract trace spaces 
\begin{gather*}
H^s(\partial T) = H^{s+1/2}(T)/(H^{s+1/2}(T) \cap H^1_0(T)), \\
H^s_{\delta}(\partial T) = H^{s+1/2}_{\delta}(T)/(H^{s+1/2}_{\delta}(T) \cap H^1_0(T)).
\end{gather*}
Since $\partial M_0$ is smooth and compact, these spaces can also be identified with standard weighted Sobolev spaces on $\partial T$.

We will see the following facts:
\begin{itemize}
\item 
$-\Delta + q_0$ with domain $H^2(T) \cap H^1_0(T)$ is self-adjoint on $L^2(T)$,
\item 
the spectrum of $-\Delta + q_0$ is $[\lambda_1,\infty)$,
\item 
if $\lambda \in \mC \smallsetminus [\lambda_1,\infty)$ then for any $\delta \in \mR$ 
$$
(-\Delta + q_0 - \lambda)^{-1}: L^2_{\delta}(T) \to \{ u \in H^2_{\delta}(T) \,;\, u|_{\partial T} = 0 \},
$$
\item 
if $\lambda \in [\lambda_1,\infty)$ and $\lambda \notin \text{Spec}(-\Delta_{g_0}+q_0)$ then for any $\delta > 1/2$ the following limiting absorption principle holds:
$$
(-\Delta + q_0 - \lambda-i0)^{-1}: L^2_{\delta}(T) \to \{ u \in H^2_{-\delta}(T) \,;\, u|_{\partial T} = 0 \}.
$$
\end{itemize}
The case of thresholds, where $\lambda = \lambda_l$, is special and will not be considered here.

In this section we will assume that $\lambda$ is not in the spectrum $[\lambda_1,\infty)$ (the general case $\lambda \in \mC \setminus \{ \lambda_1, \lambda_2, \ldots\}$ is considered in the next section). The following proposition shows that there is a well defined DN map $\Lambda_{g_0,q_0}^T(\lambda)$ related to the operator $-\Delta+q_0-\lambda$ in the cylinder $T$.

\begin{prop} \label{prop_wp_dirichlet_cylinder}
If $\lambda \in \mC \setminus [\lambda_1,\infty)$, then for any $f \in H^{3/2}(\partial T)$ there is a unique solution $u \in H^2(T)$ of the equation 
$$
(-\Delta+q_0-\lambda)u = 0 \ \ \text{in } T, \qquad u|_{\partial T} = f.
$$
If $f \in C^{\infty}_c(\partial T)$â then $u \in C^{\infty}(T)$ and there is a linear map 
$$
\Lambda_{g_0,q_0}^T(\lambda): C^{\infty}_c(\partial T) \to C^{\infty}(\partial T), \ \ f \mapsto \partial_{\nu} u|_{\partial T}.
$$
For any $\delta \in \mR$, this map extends as a bounded linear map 
$$
\Lambda_{g_0,q_0}^T(\lambda): H^{3/2}_{\delta}(\partial T) \to H^{1/2}_{\delta}(\partial T).
$$
\end{prop}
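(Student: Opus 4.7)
The plan is to reduce the Dirichlet problem to inverting a resolvent on $L^2(T)$. First I would verify that $A := -\Delta + q_0$ with domain $D(A) = H^2(T) \cap H^1_0(T)$ is self-adjoint on $L^2(T)$ with spectrum $\sigma(A) = [\lambda_1,\infty)$. The cleanest route is the Fourier transform $\mathcal{F}_t$ in the Euclidean variable: the conjugated operator $\mathcal{F}_t A \mathcal{F}_t^{-1}$ is the direct integral over $\xi \in \mathbb{R}$ of the fiber operators $\xi^2 + (-\Delta_{g_0}+q_0)$ on $M_0$ with Dirichlet data. Each fiber is self-adjoint with spectrum $\{\xi^2+\lambda_l\}_{l\ge 1}$, so the direct integral yields self-adjointness of $A$ and $\sigma(A)=[\lambda_1,\infty)$.

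For $\lambda \in \mathbb{C}\setminus[\lambda_1,\infty)$ the resolvent $R(\lambda) := (A-\lambda)^{-1}\colon L^2(T)\to D(A)$ is then bounded. Given $f\in H^{3/2}(\partial T)$, I would pick an extension $F\in H^2(T)$ with $F|_{\partial T}=f$ by standard trace lifting, set $v := -R(\lambda)\bigl[(-\Delta+q_0-\lambda)F\bigr]\in D(A)$, and define $u=F+v$. Then $u\in H^2(T)$ solves the Dirichlet problem, and uniqueness is immediate since any $H^2$ solution with zero trace lies in $\ker(A-\lambda)=\{0\}$. When $f\in C^\infty_c(\partial T)$, interior and boundary elliptic regularity for the uniformly elliptic operator $-\Delta+q_0$ with smooth coefficients upgrade $u$ to $C^\infty(T)$, so $\Lambda_{g_0,q_0}^T(\lambda)f := \partial_\nu u|_{\partial T}$ is well defined as an element of $C^\infty(\partial T)$.

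The main obstacle is the extension to weighted spaces, which reduces to showing $R(\lambda)\colon L^2_\delta(T)\to H^2_\delta(T)$ is bounded for every $\delta\in\mathbb{R}$. I would exploit the fibered representation $R(\lambda)=\mathcal{F}_t^{-1} R_\xi\,\mathcal{F}_t$, where $R_\xi=(-\Delta_{g_0}+q_0-(\lambda-\xi^2))^{-1}$ acts on $L^2(M_0)$ with Dirichlet data. Since $\lambda\notin[\lambda_1,\infty)$, the values $\lambda-\xi^2$ avoid the spectrum $\{\lambda_l\}$ not only for real $\xi$ but also in some horizontal strip $|\operatorname{Im}\xi|<a$, so $\xi\mapsto R_\xi$ is operator-valued holomorphic on that strip with bounds on all $\xi$-derivatives coming from the resolvent identity. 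Via Plancherel in $t$ and the corresponding Fourier characterization of the weighted Sobolev spaces on $T$, these symbol-type estimates translate into boundedness of $R(\lambda)$ between the weighted spaces for $\delta\ge 0$ (integer $\delta$ first, by commutator arguments against $\langle t\rangle^\delta$, then general $\delta$ by interpolation), while duality handles $\delta<0$. A weighted trace lifting $H^{3/2}_\delta(\partial T)\to H^2_\delta(T)$, straightforward since the weight depends only on $t$ whereas the trace acts in $x$, together with the standard weighted trace bound, then yields $\Lambda_{g_0,q_0}^T(\lambda)\colon H^{3/2}_\delta(\partial T)\to H^{1/2}_\delta(\partial T)$ and completes the proof.
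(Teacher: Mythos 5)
Your proof is essentially correct and lands on the same core computation as the paper — Fourier analysis in the Euclidean variable — but organizes it differently. The paper's Lemma \ref{lemma_wpcylinder1} diagonalizes \emph{fully}: it expands in the Dirichlet eigenbasis $\{\phi_l\}$ of $-\Delta_{g_0}+q_0$ on $M_0$ \emph{and} Fourier transforms in $t$, so the problem reduces to scalar multipliers $(\eta^2+\lambda_l-\lambda)^{-1}$; the weighted mapping properties then come from explicit, uniform-in-$l$ bounds on the $W^{k,\infty}(\mathbb{R})$ norms of these multipliers and their $\eta$-derivatives (via the partial-fraction identity), and self-adjointness is \emph{deduced afterwards} from the surjectivity of $A\pm i$. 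You instead prove self-adjointness first via the direct-integral decomposition and then treat $\xi\mapsto R_\xi$ as an operator-valued Mikhlin symbol, using analyticity in a strip and commutators with $\langle t\rangle^\delta$ for the weighted estimates. Your route is a bit more abstract and buys you the spectral statement $\sigma(A)=[\lambda_1,\infty)$ for free, but it leaves more work implicit: (i) verifying that a horizontal strip $|\operatorname{Im}\xi|<a$ genuinely avoids the spectrum for a suitable $a>0$ takes a short but nontrivial check (the level set $2\eta\nu=\operatorname{Im}\lambda$ must stay in the resolvent set); (ii) the $H^2_\delta$ gain in $t$ requires tracking $\xi^2 R_\xi$, $\xi\,\partial_t R_\xi$, etc., which the paper does explicitly via $(\lambda_l-\lambda)\widetilde{u}(\cdot,l)$; and (iii) for $\delta<0$, duality via $R(\lambda)^*=R(\bar\lambda)$ only yields the $L^2_{-|\delta|}$ bound directly — you need an additional slab-by-slab elliptic regularity argument (uniform in the slab because coefficients are $t$-independent) to upgrade to $H^2_{-|\delta|}$. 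These are fixable details rather than gaps, so the approach is sound; the paper's fully diagonalized version simply makes the uniform estimates more transparent.
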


The first easy observation is that one has unique solvability of the Dirichlet problem in $T$ for certain frequencies by the usual Lax-Milgram type argument. The key point is that $\partial T$ is compact in the direction transverse to $t$, so it has a Poincar\'e inequality in the standard Sobolev spaces.

\begin{lemma}
Let $\lambda$ satisfy $-\infty < \lambda < \lambda_1$. For any $F \in H^{-1}(T)$ the equation $(-\Delta + q_0 - \lambda)u = F$ in $T$ has a unique solution $u \in H^1_0(T)$, and one has $\norm{u}_{H^1(T)} \leq C \norm{F}_{H^{-1}(T)}$.
\end{lemma}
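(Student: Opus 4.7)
My plan is to prove this by a standard Lax--Milgram argument on the Hilbert space $H^1_0(T)$, with the coercivity of the associated sesquilinear form coming from a transverse Fourier-type decomposition in the eigenbasis $\{\phi_l\}$ of $-\Delta_{g_0}+q_0$ on $M_0$.

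First, I introduce the sesquilinear form
\[
B(u,v) = \int_T \bigl( \langle du, \overline{dv} \rangle_g + (q_0-\lambda)\, u\bar v \bigr)\, dV, \qquad u,v \in H^1_0(T).
\]
Boundedness $|B(u,v)| \le C \|u\|_{H^1(T)}\|v\|_{H^1(T)}$ is immediate from $q_0 \in L^\infty(M_0)$ and $\lambda$ fixed. The content of the lemma is coercivity, and this is where $\lambda < \lambda_1$ is used.

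For coercivity, I expand in the transverse direction. Given $u \in H^1_0(T)$, set $u_l(t) = (u(t,\cdot),\phi_l)_{L^2(M_0)}$, so that for a.e.\ $t$ we have $u(t,\cdot) = \sum_l u_l(t)\phi_l$ in $H^1_0(M_0)$ with
\[
\int_{M_0}\bigl(|\nabla_{g_0}u(t,\cdot)|^2 + q_0|u(t,\cdot)|^2\bigr) dV_{g_0} = \sum_l \lambda_l |u_l(t)|^2,
\]
and $\partial_t u(t,\cdot) = \sum_l u_l'(t)\phi_l$ in $L^2(M_0)$. Integrating in $t$ and using the product structure $dV = dt \, dV_{g_0}$, Parseval gives
\[
B(u,u) = \sum_{l=1}^\infty \int_\mR \bigl(|u_l'(t)|^2 + (\lambda_l-\lambda)|u_l(t)|^2\bigr)\, dt.
\]
Since $\lambda_l \ge \lambda_1 > \lambda$, we have $\lambda_l - \lambda \ge \min(1,\lambda_1-\lambda)(1 + \lambda_l)/(1+\lambda_1)$ up to a constant, which after a small rearrangement yields
\[
B(u,u) \ge c \sum_l \int_\mR \bigl(|u_l'(t)|^2 + (1+\lambda_l)|u_l(t)|^2\bigr)\, dt \ge c\, \|u\|_{H^1(T)}^2
\]
with $c = c(\lambda_1,\lambda,\|q_0\|_\infty) > 0$; in the last step I use that the right-hand sum equals $\|\partial_t u\|_{L^2(T)}^2 + \|u\|_{L^2(T)}^2 + \int_T(|\nabla_{g_0}u|^2 + q_0|u|^2)\,dV$, which is equivalent to $\|u\|_{H^1(T)}^2$.

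With boundedness and coercivity established, Lax--Milgram applied to the antilinear functional $v \mapsto \overline{\langle F,v\rangle}$ for $F \in H^{-1}(T) = (H^1_0(T))^*$ produces a unique $u \in H^1_0(T)$ with $B(u,v) = \langle F,v\rangle$ for all $v \in H^1_0(T)$, together with the estimate $\|u\|_{H^1(T)} \le C\|F\|_{H^{-1}(T)}$. This weak identity is exactly $(-\Delta + q_0 - \lambda)u = F$ in $\mathscr{D}'(T^{\mathrm{int}})$, with the vanishing trace built into $H^1_0(T)$.

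The only mildly delicate point is the justification of the eigenfunction expansion with norms involving $\partial_t u$; this is handled by noting that for $u \in C^\infty_c(T^{\mathrm{int}})$ the formal identity is plain, and then extending by density in $H^1_0(T)$, using that partial sums $\sum_{l\le N} u_l(t)\phi_l(x)$ converge to $u$ in $H^1(T)$ precisely because the norms on both sides agree. This is the only step that requires any care; everything else is a direct application of Lax--Milgram.
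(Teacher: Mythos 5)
Your proof is correct and follows essentially the same Lax--Milgram approach as the paper, with coercivity of the form $B$ resting on $\lambda < \lambda_1$. The paper establishes coercivity directly from the variational inequality $\int_{M_0}(\abs{d_x v}^2 + q_0\abs{v}^2)\,dV_{g_0} \ge \lambda_1\int_{M_0}\abs{v}^2\,dV_{g_0}$ together with an $\eps$-splitting, whereas you expand $u(t,\cdot)$ in the Dirichlet eigenbasis of $-\Delta_{g_0}+q_0$ and invoke Parseval; these are interchangeable presentations of the same estimate.
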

\begin{proof}
Write $d$ and $d_x$ for the exterior differentials in $T$ and $M_0$, respectively. We consider the bilinear form 
$$
B(u,v) = \int_T ( \langle d u, d \bar{v} \rangle + q_0 u \bar{v} - \lambda u \bar{v}) \,dV, \quad u, v \in H^1_0(T).
$$
This is a bounded symmetric sesquilinear form on $H^1_0(T)$ and satisfies 
$$
B(u,u) = \int_{-\infty}^{\infty} \int_{M_0} (\abs{\partial_t u}^2 + \abs{d_x u}^2 + q_0 \abs{u}^2 - \lambda \abs{u}^2) \,dV_{g_0} \,dt.
$$
We note the inequality 
$$
\int_{M_0} (\abs{d_x v}^2 + q_0 \abs{v}^2) \,dV_{g_0} \geq \lambda_1 \int_{M_0} \abs{v}^2 \,dV_{g_0}, \quad v \in H^1_0(M_0).
$$
This shows that $B(u,u) \geq \eps \norm{du}_{L^2(T)}^2 + (\lambda_1(1-\eps) - \eps \norm{q_0}_{L^{\infty}} - \lambda) \norm{u}_{L^2(T)}^2$ for $0 < \eps < 1$. If $\eps$ is sufficiently small we see that $B$ is coercive and the Riesz representation theorem shows the existence of a unique solution of $(-\Delta + q_0 - \lambda) u = F$ in $T$.
\end{proof}

Elliptic regularity for the previous problem, even in weighted spaces, can be proved by a Fourier analysis argument.

\begin{lemma} \label{lemma_wpcylinder1}
Let $\lambda \in \mC \smallsetminus [\lambda_1,\infty)$ and let $\delta \in \mR$. For any $F \in L^2_{\delta}(T)$ there is a unique solution $u \in H^1_{\delta,0}(T)$ of the equation $(-\Delta + q_0 - \lambda) u = F$ in $T$. Further, $u \in H^2_{\delta}(T)$ and $\norm{u}_{H^2_{\delta}(T)} \leq C_{\delta,\lambda} \norm{F}_{L^2_{\delta}(T)}$.
\end{lemma}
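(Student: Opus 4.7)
The approach is to exploit the product structure of $T = \mR \times M_0$ and the fact that $g_0$, $q_0$ are $t$-independent. Expand $u$ and $F$ in the orthonormal basis of Dirichlet eigenfunctions $\{\phi_l\}$ of $-\Delta_{g_0}+q_0$ on $M_0$, writing $u(t,x) = \sum_l u_l(t)\phi_l(x)$ and $F(t,x) = \sum_l F_l(t)\phi_l(x)$. The equation $(-\Delta+q_0-\lambda)u=F$ together with the boundary condition (automatically enforced by $\phi_l \in H^1_0(M_0)$) then decouples into the countable family of one-dimensional Helmholtz-type ODEs
$$ -u_l''(t) + \mu_l \, u_l(t) = F_l(t), \qquad t \in \mR, \qquad \mu_l := \lambda_l - \lambda. $$
The hypothesis $\lambda \in \mC \setminus [\lambda_1,\infty)$ gives $\mu_l \in \mC \setminus (-\infty,0]$ for all $l$, so $\sqrt{\mu_l}$ is well-defined with $\re\sqrt{\mu_l} \geq c_\lambda > 0$ uniformly in $l$; moreover $\re\sqrt{\mu_l} \to \infty$ as $l \to \infty$.

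The unique tempered solution of the $l$-th ODE is $u_l = G_l * F_l$ with $G_l(\tau) = \frac{1}{2\sqrt{\mu_l}} e^{-\sqrt{\mu_l}|\tau|}$. Young's inequality plus $\|G_l\|_{L^1(\mR)} = |\mu_l|^{-1}$ yields $\|u_l\|_{L^2(\mR)} \leq |\mu_l|^{-1}\|F_l\|_{L^2(\mR)}$. Using the ODE to express $u_l''$ and an interpolation bound for $u_l'$ one obtains
$$ \|u_l''\|_{L^2}^2 + |\mu_l|\,\|u_l'\|_{L^2}^2 + |\mu_l|^2\,\|u_l\|_{L^2}^2 \lesssim \|F_l\|_{L^2}^2. $$
Summing on $l$ with Parseval and the characterization of $H^2(T)\cap H^1_0(T)$ via the $\{\phi_l\}$-expansion (the $x$-derivatives correspond to the $\lambda_l$ factors) gives the $\delta=0$ estimate $\|u\|_{H^2(T)} \lesssim \|F\|_{L^2(T)}$, and uniqueness follows because the only solutions of the homogeneous ODE are linear combinations of $e^{\pm\sqrt{\mu_l}\,t}$, none of which are tempered when $\re\sqrt{\mu_l} > 0$.

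For $\delta \neq 0$, use the weight comparison $\br{t}^\delta \leq C_\delta \br{t-s}^{|\delta|}\br{s}^\delta$; combined with the uniform lower bound on $\re\sqrt{\mu_l}$, this gives $\|\br{\cdot}^{|\delta|} G_l\|_{L^1(\mR)} \leq C_{\delta,\lambda}|\mu_l|^{-1}$, so the unweighted argument applies with $F_l$ replaced by $\br{s}^\delta F_l(s)$ and $u_l$ by $\br{t}^\delta u_l$. Equivalently one can argue by commutators: $[-\Delta+q_0-\lambda,\,\br{t}^\delta] = -2\delta \br{t}^{\delta-2} t\, \partial_t + O(\br{t}^{\delta-2})$, so $\br{t}^\delta u$ solves an equation with right-hand side $\br{t}^\delta F$ plus terms strictly lower order in the weight, and a standard induction/bootstrap on $|\delta|$ starting from the $\delta = 0$ case delivers the $H^2_\delta$ bound.

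The main obstacle I anticipate is the bookkeeping required to combine, uniformly in $l$, the convolution estimate for each mode with the weighted-space argument — in particular, justifying that the weighted Green's function $\br{\tau}^{|\delta|} G_l(\tau)$ is uniformly in $L^1$ with the correct $|\mu_l|^{-1}$ decay, and that the eigenfunction expansion converges in the weighted spaces (which is where the separation of $t$ and $x$ variables is essential: the weight depends only on $t$, so it commutes with the $\phi_l$-expansion). Once this is set up, the $H^2$ regularity in the $x$-direction is handled by the spectral side ($\lambda_l$ factors) and in the $t$-direction by the ODE-level estimates, giving the claimed bound $\|u\|_{H^2_\delta(T)} \leq C_{\delta,\lambda}\|F\|_{L^2_\delta(T)}$.
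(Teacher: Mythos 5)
Your proposal follows the same skeleton as the paper's proof — expand in the Dirichlet eigenbasis $\{\phi_l\}$ of $-\Delta_{g_0}+q_0$ on $M_0$, reduce to the decoupled scalar ODEs $(-\partial_t^2+\lambda_l-\lambda)\widetilde{u}(\,\cdot\,,l)=\widetilde{F}(\,\cdot\,,l)$, estimate mode by mode, and reassemble — but the technical engine for the one-dimensional weighted estimate is genuinely different. The paper works on the Fourier side: it observes that the Fourier transform is an isometry $L^2_{\delta}(\mR)\to H^{\delta}(\mR)$, so the multiplier $m_l(\eta)=(\eta^2+\lambda_l-\lambda)^{-1}$ must be bounded on $H^{\delta}(\mR)$, and it proves a uniform-in-$l$ bound $\norm{m_l}_{W^{k,\infty}(\mR)}\leq C_{k,\lambda}$ for $k\geq|\delta|$ via partial fractions, plus analogous bounds for $\eta m_l$ and $(\lambda_l-\lambda)m_l$ to get the full $H^2_{\delta}$ estimate. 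You instead work in physical space with the Green's kernel $G_l(\tau)=\frac{1}{2\sqrt{\mu_l}}e^{-\sqrt{\mu_l}|\tau|}$ and obtain the weighted estimate from the elementary inequality $\br{t}^{\delta}\leq C_{\delta}\br{t-s}^{|\delta|}\br{s}^{\delta}$ together with the weighted $L^1$ bound on $G_l$. The two routes are dual and both work; yours avoids the partial-fraction derivative bookkeeping in $\eta$ at the cost of needing the decay rate $\re\sqrt{\mu_l}$ of the kernel, which you correctly note is bounded below uniformly in $l$ (this is precisely the content of the paper's estimate $|\im z_l|\geq c_{\alpha,\beta}$ after the substitution $z_l=i\sqrt{\mu_l}$).

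One small slip: $\norm{G_l}_{L^1(\mR)}$ is not $|\mu_l|^{-1}$ but $(|\sqrt{\mu_l}|\,\re\sqrt{\mu_l})^{-1}$, and more generally $\norm{\br{\,\cdot\,}^{|\delta|}G_l}_{L^1(\mR)}\lesssim_{\delta}(|\sqrt{\mu_l}|\,\re\sqrt{\mu_l})^{-1}$. To convert this to the $|\mu_l|^{-1}$ you use in the ladder estimate, you need $\re\sqrt{\mu_l}\gtrsim_{\lambda}|\sqrt{\mu_l}|$, i.e.\ $\cos(\tfrac{1}{2}\arg\mu_l)\geq c_{\lambda}>0$, uniformly in $l$; this does hold because for a fixed $\lambda\in\mC\setminus[\lambda_1,\infty)$ the distance from $\lambda$ to $[\lambda_1,\infty)$ is positive, so $\pi-|\arg\mu_l|$ has a positive lower bound (only finitely many $l$ give $\mu_l$ with large argument, and $\arg\mu_l\to 0$ as $l\to\infty$). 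It is worth making this explicit, since it is exactly where the hypothesis $\lambda\notin[\lambda_1,\infty)$ enters. Finally, as you anticipate in your last paragraph, passing from the decay of $(\lambda_l-\lambda)\widetilde{u}(\,\cdot\,,l)$ to full $H^2_{\delta}(T)$ regularity requires elliptic regularity for $-\Delta_{g_0}$ on $M_0$ slicewise (the $\lambda_l$ factors control $\Delta_{g_0}u$ but not $\nabla^2_{g_0}u$ directly); the paper carries this out explicitly and you should too.
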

\begin{proof}
Write $\widetilde{v}(t,l) = (v(t,\,\cdot\,), \phi_l)_{L^2(M_0)}$ for the partial Fourier coefficients. If $F \in L^2_{\delta}(T)$ we note that 
\begin{align*}
\norm{F}_{L^2_{\delta}(T)}^2 &= \int_{-\infty}^{\infty} \int_{M_0} \br{t}^{2\delta} \abs{F(t,x)}^2 \,dV_{g_0} \,dt = \int_{-\infty}^{\infty} \br{t}^{2\delta} \sum_{l=1}^{\infty} \abs{\widetilde{F}(t,l)}^2 \,dt \\
 &= \sum_{l=1}^{\infty} \norm{\widetilde{F}(\,\cdot\,,l)}_{L^2_{\delta}(\mR)}^2.
\end{align*}
This implies that $\widetilde{F}(\,\cdot\,,l) \in L^2_{\delta}(\mR)$ for all $l$, and the Fourier transform satisfies $\widehat{F}(\,\cdot\,,l) \in H^{\delta}(\mR)$. Formally, the equation $(-\Delta + q_0 - \lambda) u = F$ reduces to a system of ODEs:
$$
(-\partial_t^2 + \lambda_l - \lambda) \widetilde{u}(\,\cdot\,,l) = \widetilde{F}(\,\cdot\,,l) \ \ \text{on $\mR$, for $l = 1,2,\ldots$.}
$$
By taking Fourier transforms in $t$ (with dual variable $\eta$), we obtain 
\begin{equation} \label{tildeutl}
\widetilde{u}(t,l) = \mF_{\eta}^{-1} \left\{ \frac{1}{\eta^2 + \lambda_l - \lambda} \widehat{F}(\eta,l) \right\}.
\end{equation}

Uniqueness follows immediately since if $u \in H^1_{\delta,0}(T)$ solves the equation $(-\Delta + q_0 - \lambda)u = 0$Ê   in $T$ for some $\delta \in \mR$, then $\widetilde{u}(\,\cdot\,,l) \in L^2_{\delta}(\mR)$ and by taking Fourier transforms $(\eta^2 + \lambda_l - \lambda) \widehat{u}(\eta,l) = 0$ for $\eta \in \mR$ and for all $l$. Here $\eta^2 + \lambda_l - \lambda$ is never zero using that $\lambda \in \mC \smallsetminus [\lambda_1,\infty)$, so $\widetilde{u}(\,\cdot\,,l) = 0$Ê   for all $l$Ê   and $u = 0$.

We move to existence and let $F \in L^2_{\delta}(T)$. If $\widetilde{u}(t,l)$ is defined by \eqref{tildeutl}, then for $k \geq \abs{\delta}$ we have 
$$
\norm{\widetilde{u}(\,\cdot\,,l)}_{L^2_{\delta}(\mR)} = \norm{\widehat{u}(\,\cdot\,,l)}_{H^{\delta}(\mR)} \leq \norm{(\eta^2 + \lambda_l - \lambda)^{-1}}_{W^{k,\infty}(\mR)} \norm{\widehat{F}(\,\cdot\,,l)}_{H^{\delta}(\mR)}.
$$
We need to estimate the $W^{k,\infty}$ norm uniformly in $l$, using the condition $\lambda \in \mC \smallsetminus [\lambda_1,\infty)$. Note that 
\begin{equation} \label{eta_partialfraction}
(\eta^2 + \lambda_l - \lambda)^{-1} = \frac{1}{2 z_l} \left( \frac{1}{\eta - z_l} - \frac{1}{\eta + z_l} \right)
\end{equation}
where $z_l^2 = \lambda - \lambda_l = \alpha - (\lambda_l - \lambda_1) + i\beta$ and $\alpha = \re(\lambda) - \lambda_1, \beta = \im(\lambda)$ (we assume $z_l \in \{ z \,;\, \re(z) > 0, \im(z) \neq 0 \} \cup \{ ir \,;\, r > 0 \}$). Write 
$$
f(t) = f_{\alpha,\beta}(t) = \im((\alpha - t + i\beta)^{1/2}), \quad t \geq 0.
$$
When $\alpha \leq 0$Ê   we have $\abs{f(t)} \geq \abs{f(0)}$ for $t \geq 0$, and when $\alpha > 0$Ê   we have $\abs{f(t)} \geq \abs{f(\alpha)}$ for $t \geq \alpha$ and $\abs{f(t)} \geq c_{\alpha,\beta}$ for $0 \leq t \leq \alpha$. This shows that 
$$
\abs{\im(z_l)} \geq c_{\alpha,\beta} > 0.
$$
Since 
\begin{equation} \label{eta_function_derivative}
\left( \frac{\partial}{\partial \eta} \right)^{j} (\eta^2 + \lambda_l - \lambda)^{-1} = \frac{C_j}{2 z_l} \left( \frac{1}{(\eta - z_l)^{j+1}} - \frac{1}{(\eta + z_l)^{j+1}} \right),
\end{equation}
and since $\abs{z_l}^2 = \abs{\lambda-\lambda_l} \geq \abs{\im(\lambda)}$ if $\im(\lambda) \neq 0$ and $\abs{\lambda-\lambda_l} \geq \abs{\re(\lambda)-\lambda_1}$ when $\im(\lambda) = 0$, we have 
$$
\norm{(\eta^2 + \lambda_l - \lambda)^{-1}}_{W^{k,\infty}(\mR)} \leq C_{k,\lambda}.
$$
Thus
$$
\norm{\widetilde{u}(\,\cdot\,,l)}_{L^2_{\delta}(\mR)} \leq C_{\delta,\lambda} \norm{\widetilde{F}(\,\cdot\,,l)}_{L^2_{\delta}(\mR)}.
$$

We define 
$$
u_N(t,x) = \sum_{l=1}^N \widetilde{u}(t,l) \phi_l(x).
$$
It follows that when $M \leq N$, 
$$
\norm{u_M - u_N}_{L^2_{\delta}(T)}^2 = \sum_{l=M+1}^N \norm{\widetilde{u}(\,\cdot\,,l)}_{L^2_{\delta}(\mR)}^2 \leq C \sum_{l=M+1}^N \norm{\widetilde{F}(\,\cdot\,,l)}_{L^2_{\delta}(\mR)}^2.
$$
Thus $(u_N)$ is a Cauchy sequence in $L^2_{\delta}(T)$ and converges to some $u \in L^2_{\delta}(T)$. Since 
$$
(-\Delta + q_0 - \lambda) u_N(t,x) = \sum_{l=1}^N \widetilde{F}(t,l) \phi_l(x),
$$
we obtain that $u$ is a distributional solution of $(-\Delta + q_0 - \lambda) u = F$ in $T$.

We next show that $u \in H^1_{\delta}(T)$. The expression for $\widetilde{u}(t,l)$ together with \eqref{eta_partialfraction} implies that 
$$
\norm{\partial_t \widetilde{u}(\,\cdot\,,l)}_{L^2_{\delta}} + \abs{\lambda_l-\lambda}^{1/2 } \norm{\widetilde{u}(\,\cdot\,,l)}_{L^2_{\delta}} \leq C \norm{\widetilde{F}(\,\cdot\,,l)}_{L^2_{\delta}}.
$$
Thus $\norm{\partial_t u}_{L^2_{\delta}} \leq C \norm{F}_{L^2_{\delta}}$ and  
\begin{align*}
\norm{\nabla_{g_0} u_N(t)}_{L^2(M_0)}^2 &= (-\Delta_{g_0} u_N(t), u_N(t))_{L^2(M_0)} \\
 &= \sum_{l=1}^N (\lambda_l-\lambda) \abs{\widetilde{u}(t,l)}^2 + ((\lambda-q_0) u_N(t), u_N(t))_{L^2(M_0)}.
\end{align*}
Consequently $\norm{\nabla_{g_0} u}_{L^2_{\delta}(T)} \leq C \norm{F}_{L^2_{\delta}(T)}$, and then also $u \in H^1_{\delta,0}(T)$.

Finally, we check that $u \in H^2_{\delta}(T)$. Note that 
$$
(\lambda_l - \lambda) \widetilde{u}(t,l) = - \mF_{\eta}^{-1} \left\{ \frac{z_l^2}{\eta^2 - z_l^2} \widehat{F}(\eta,l) \right\}
$$
and so, for $k \geq \abs{\delta}$,  
$$
\norm{(\lambda_l - \lambda) \widetilde{u}(\,\cdot\,,l)}_{L^2_{\delta}} \leq \norm{z_l^2/(\eta^2-z_l^2)}_{W^{k,\infty}} \norm{\widetilde{F}(\,\cdot\,,l)}_{L^2_{\delta}}.
$$
We use \eqref{eta_function_derivative} to write 
\begin{align*}
\left( \frac{\partial}{\partial \eta} \right)^{j} \frac{z_l^2}{\eta^2-z_l^2} &= \frac{C_j z_l}{2} \frac{(\eta+z_l)^{j+1} - (\eta-z_l)^{j+1}}{(\eta^2-z_l^2)^{j+1}} \\
 &= C_j \frac{z_l^2}{\eta^2 - z_l^2} \sum_{a=0}^j \frac{C_{ja} \eta^{j-a} z_l^a}{(\eta^2-z_l^2)^j}.
\end{align*}
We choose $L = L(\lambda, g_0)$ so large that for $\re(z_l^2) < 0$ for $l \geq L$. Then $\abs{\eta^2-z_l^2} \geq \abs{z_l}^2$ for $l \geq L$, and also $\abs{z_l^2/(\eta^2-z_l^2)} \leq C_{\lambda,g_0}$ for $l=1, \ldots, L-1$. Also $\abs{\eta/(\eta^2-z_l^2)}, \abs{z_l/(\eta^2-z_l^2)} \leq C_{\lambda,g_0}$. It follows that 
$$
\norm{(\lambda_l - \lambda) \widetilde{u}(\,\cdot\,,l)}_{L^2_{\delta}} \leq C \norm{\widetilde{F}(\,\cdot\,,l)}_{L^2_{\delta}}.
$$
Therefore $(-\Delta_{g_0} + q_0 - \lambda) u \in L^2_{\delta}(T)$. By elliptic regularity 
$$
\norm{\nabla_{g_0}^2 u(t,\,\cdot \,)}_{L^2(M_0)} \leq C(\norm{u(t,\,\cdot \,)}_{L^2(M_0)} + \norm{\Delta_{g_0} u(t,\,\cdot \,)}_{L^2(M_0)})
$$
for almost every $t$. Consequently $\nabla_{g_0}^2 u \in L^2_{\delta}(T)$, which implies that also $\partial_t^2 u \in L^2_{\delta}(T)$. By a short argument we obtain $\nabla_{g_0} \partial_t u \in L^2_{\delta}(T)$ and the proof is finished.
\end{proof}

The previous lemma also implies self-adjointness.

\begin{lemma}
If $q_0 \in C^{\infty}(M_0)$Ê   is real valued, then the operator $-\Delta + q_0$ with domain $H^2(T) \cap H^1_0(T)$ is self-adjoint on $L^2(T)$.
\end{lemma}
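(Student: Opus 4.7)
The plan is to apply the basic criterion for self-adjointness: a symmetric operator $A$ on a Hilbert space is self-adjoint if and only if $\mathrm{ran}(A - \lambda) = \mathrm{ran}(A - \bar{\lambda}) = $ the whole space for some (equivalently any) $\lambda \in \mC \setminus \mR$. I would take $A = -\Delta + q_0$ with $D(A) = H^2(T) \cap H^1_0(T)$, first establish symmetry, and then read off the ranges condition with $\lambda = i$ from Lemma \ref{lemma_wpcylinder1}.

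For symmetry, let $u, v \in D(A)$. The formal computation $(Au,v) - (u,Av) = \int_T [(-\Delta u)\bar{v} - u(-\Delta \bar{v})] \,dV$ uses that $q_0$ is real to cancel the potential terms. To justify the integration by parts I would apply Green's identity on the finite cylinder $(-R,R) \times M_0$: since $u, v \in H^1_0(T)$ the lateral contribution over $(-R,R) \times \partial M_0$ vanishes, leaving only the caps $\{\pm R\} \times M_0$, so
\[
\int_{-R}^R \!\!\int_{M_0}\! [(-\Delta u)\bar{v} - u(-\Delta \bar{v})] \,dV_{g_0}\,dt = \Bigl[\int_{M_0}\! \bigl( (\partial_t u)\bar{v} - u\,\partial_t \bar{v} \bigr) dV_{g_0}\Bigr]_{t=-R}^{t=R}.
\]
Because $u,v \in H^2(T)$, the function $t \mapsto \|\partial_t u(t,\cdot)\|_{L^2(M_0)} \|v(t,\cdot)\|_{L^2(M_0)} + \|u(t,\cdot)\|_{L^2(M_0)}\|\partial_t v(t,\cdot)\|_{L^2(M_0)}$ is in $L^1(\mR)$ by Cauchy--Schwarz, hence there is a sequence $R_j \to \infty$ along which the boundary terms at $\pm R_j$ tend to zero, yielding $(Au,v) = (u,Av)$.

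For the ranges condition, Lemma \ref{lemma_wpcylinder1} applied with $\delta = 0$ and $\lambda = \pm i$ (both in $\mC \setminus [\lambda_1,\infty)$ since $\lambda_1 \in \mR$) shows that for every $F \in L^2(T)$ there is a unique $u \in H^1_{0,0}(T) \cap H^2_0(T) = D(A)$ solving $(A - \lambda)u = F$. Hence $A - i$ and $A + i$ are both surjective onto $L^2(T)$, and the criterion forces $A$ to be self-adjoint.

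I do not foresee a genuine obstacle. The only mild technical point is the justification of integration by parts on the infinite cylinder, which is handled by the subsequence argument above; everything else is a direct invocation of Lemma \ref{lemma_wpcylinder1}. Note also that this same setup will immediately identify the spectrum: the partial Fourier expansion in Lemma \ref{lemma_wpcylinder1} shows that $(A - \lambda)^{-1}$ is bounded on $L^2(T)$ precisely for $\lambda \in \mC \setminus [\lambda_1,\infty)$, so $\mathrm{Spec}(A) = [\lambda_1,\infty)$, recovering the second bullet preceding Proposition \ref{prop_wp_dirichlet_cylinder}.
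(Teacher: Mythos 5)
Your proposal matches the paper's argument: the paper likewise notes symmetry and density of the domain and then invokes Lemma \ref{lemma_wpcylinder1} to get surjectivity of $-\Delta + q_0 \pm i$, which is the basic criterion for self-adjointness. You have merely filled in the routine verification of symmetry (Green's identity on truncated cylinders plus a vanishing-subsequence argument for the caps), which the paper leaves implicit.
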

\begin{proof}
The operator $-\Delta + q_0$ with this domain is densely defined and symmetric, and by Lemma \ref{lemma_wpcylinder1} the range of $-\Delta + q_0 \pm i$ is all of $L^2(T)$.
\end{proof}

Proposition \ref{prop_wp_dirichlet_cylinder} is a straightforward consequence of Lemma \ref{lemma_wpcylinder1} and elliptic regularity (see also the proof of Proposition \ref{prop_wp_dirichlet_cylinder2} below). The key point in the proof of the uniqueness result, Theorem \ref{thm_calderon_cylinder_reconstruction}, is the following connection between the DN map for Schr\"odinger operator $-\partial_t^2 - \Delta_{g_0} + q_0$ in $T$ and the DN map for the transversal Schr\"odinger operator $-\Delta_{g_0} + q_0$ in $M_0$. We define the transversal DN map at energy $\mu \in \mC \setminus \{ \lambda_1,\lambda_2,\ldots \}$ as 
$$
\Lambda_{g_0,q_0}^{M_0}(\mu): H^{3/2}(\partial M_0) \to H^{1/2}(\partial M_0), \ \ h \mapsto \partial_{\nu} v_h|_{\partial M_0}
$$
where $v_h$ is the unique solution of the Dirichlet problem 
$$
(-\Delta_{g_0} + q_0 - \mu)v_h = 0 \ \ \text{in } M_0, \qquad v_h|_{\partial M_0} = h.
$$

\begin{prop} \label{lemma_dnmap_transversal_cylinder}
If $\lambda \in \mC \setminus [\lambda_1,\infty)$ and if $k \in \mR$, then 
$$
\Lambda_{g_0,q_0}^{M_0}(\lambda-k^2) h = e^{-ikt} \Lambda_{g_0,q_0}^T(\lambda)(e^{ikt} h|_{\partial T}).
$$
In particular, the expression on the right is independent of the $t$ variable.
\end{prop}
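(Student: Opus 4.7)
The plan is to write down an explicit solution on the cylinder by separating variables in the obvious way, and then invoke the uniqueness from Lemma \ref{lemma_wpcylinder1} (in a sufficiently weighted class) to conclude that this ansatz actually represents the solution defining $\Lambda_{g_0,q_0}^T(\lambda)$.

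Concretely, given $h \in H^{3/2}(\partial M_0)$, let $v_h \in H^2(M_0)$ be the unique solution of $(-\Delta_{g_0}+q_0-(\lambda-k^2))v_h=0$ in $M_0$ with $v_h|_{\partial M_0}=h$; this exists because the hypothesis $\lambda \in \mC\setminus[\lambda_1,\infty)$ forces $\lambda-k^2 \notin \mathrm{Spec}(-\Delta_{g_0}+q_0)$ (note $\lambda-k^2$ either lies in $\mC\setminus\mR$ or is strictly less than $\lambda_1$). Set
\[
u(t,x) = e^{ikt}v_h(x), \qquad (t,x)\in T.
\]
A direct computation using $\Delta_g = \partial_t^2+\Delta_{g_0}$ gives
\[
(-\Delta_g+q_0-\lambda)u = e^{ikt}\bigl(k^2-\Delta_{g_0}+q_0-\lambda\bigr)v_h = e^{ikt}\bigl(-\Delta_{g_0}+q_0-(\lambda-k^2)\bigr)v_h = 0
\]
in $T$, and clearly $u|_{\partial T}=e^{ikt}h|_{\partial T}$ with outer normal derivative $\partial_\nu u|_{\partial T} = e^{ikt}\,\Lambda_{g_0,q_0}^{M_0}(\lambda-k^2)h$.

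The only nontrivial point is to identify $u$ with the solution used in the definition of $\Lambda_{g_0,q_0}^T(\lambda)$. Since $e^{ikt}$ does not decay, $e^{ikt}h|_{\partial T}$ is not in $H^{3/2}(\partial T)$, but a direct check shows $\langle t\rangle^{\delta}e^{ikt}v_h \in H^2(T)$ for any $\delta<-1/2$, so $u \in H^2_{\delta}(T)$ and its trace lies in $H^{3/2}_{\delta}(\partial T)$ for such $\delta$. By the extension stated in Proposition \ref{prop_wp_dirichlet_cylinder}, the map $\Lambda_{g_0,q_0}^T(\lambda)$ acts on $H^{3/2}_{\delta}(\partial T)$; applying the uniqueness part of Lemma \ref{lemma_wpcylinder1} to the difference between $u$ and any other solution in $H^1_{\delta,0}(T)$ with the same boundary data (which has $F=0$), we conclude that $u$ is precisely the distinguished solution. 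Hence
\[
\Lambda_{g_0,q_0}^T(\lambda)\bigl(e^{ikt}h|_{\partial T}\bigr) = \partial_\nu u|_{\partial T} = e^{ikt}\,\Lambda_{g_0,q_0}^{M_0}(\lambda-k^2)h,
\]
and multiplying by $e^{-ikt}$ gives the stated identity; the right side is visibly independent of $t$.

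The only conceptual obstacle is the weighted-space bookkeeping: one must observe that although the natural separated solution $e^{ikt}v_h$ does not belong to the unweighted $H^2(T)$ used in the original formulation, it does belong to $H^2_{\delta}(T)$ for $\delta<-1/2$, and both existence and uniqueness of the Dirichlet problem in this weighted class (Lemma \ref{lemma_wpcylinder1}) pin it down. Once that is in place the proof is a one-line separation-of-variables check.
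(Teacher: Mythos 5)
Your proof is correct and follows essentially the same route as the paper: take the separated ansatz $u(t,x)=e^{ikt}v_h(x)$, observe it lies in $H^2_{\delta}(T)$ for $\delta<-1/2$, and use the weighted well-posedness (Proposition \ref{prop_wp_dirichlet_cylinder} via Lemma \ref{lemma_wpcylinder1}) to identify it with the solution defining the cylinder DN map. Your explicit appeal to the uniqueness part of Lemma \ref{lemma_wpcylinder1} to pin down $u$ is a point the paper leaves implicit, but the underlying argument is the same.
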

\begin{proof}
Let $h \in H^{3/2}(\partial M_0)$, and let $v_h \in H^2(M_0)$ solve 
$$
(-\Delta+q_0-(\lambda-k^2)) v_h = 0 \ \ \text{in } M_0, \qquad v_h|_{\partial M_0} = h.
$$
Note that since $\lambda \notin [\lambda_1,\infty)$, the number $\lambda-k^2$ is not a Dirichlet eigenvalue of $-\Delta_{g_0}+q_0$ and there is a unique solution $v_h$. Define 
$$
f(t,x) = e^{ikt} h(x).
$$
Since $k$ is real, we have $f \in H^{3/2}_{\delta}(\partial T)$ for any $\delta < -1/2$. The function $u(t,x) = e^{ikt} v_h(x)$ is in $H^2_{\delta}(T)$ and solves 
$$
(-\partial_t^2 - \Delta_{g_0} + q_0 - \lambda) u = 0 \ \ \text{in } T, \qquad u|_{\partial T} = f.
$$
Thus 
$$
\Lambda_{g_0,q_0}^T(\lambda) f = \partial_{\nu} u|_{\partial T} = e^{ikt} (\partial_{\nu} v_h|_{\partial M_0}) = e^{ikt} \Lambda_{g_0,q_0}^{M_0}(\lambda-k^2) h.
$$
This proves the result.
\end{proof}

We can now prove Theorems \ref{thm_calderon_cylinder_reconstruction} and \ref{thm_calderon_cylinder_uniqueness}, showing that the DN map $\Lambda_{g_0,q_0}^T(\lambda)$ at a fixed energy $\lambda \notin [\lambda_1,\infty)$ uniquely determines the metric $g_0$ up to isometry and also the potential $q_0$.

\begin{proof}[Proof of Theorem \ref{thm_calderon_cylinder_reconstruction}]
Suppose one is given the manifold $\partial T = \mR \times \partial M_0$ and the map $\Lambda_{g_0,q_0}^T(\lambda): C^{\infty}_c(\partial T) \to C^{\infty}(\partial T)$ for some fixed $\lambda \in \mC \setminus [\lambda_1,\infty)$. We may assume that $\partial M_0$ is known. Since $C^{\infty}_c(\partial T)$ is dense in $H^{3/2}_{\delta}(\partial T)$ for all $\delta$, we also know the map 
$$
\Lambda_{g_0,q_0}^T(\lambda): H^{3/2}_{\delta}(\partial T) \to H^{1/2}_{\delta}(\partial T)
$$
for all $\delta$.

Since $e^{ikt} h \in H^{3/2}_{\delta}(\partial T)$ whenever $k \in \mR$ and $\delta < -1/2$, we may compute the map 
$$
\Lambda_{g_0,q_0}^{M_0}(\lambda-k^2): H^{3/2}(\partial M_0) \to H^{1/2}(\partial M_0)
$$
for all $k \in \mR$ from the knowledge of $(\partial T, \Lambda_{g_0,q_0}^T(\lambda))$ by Proposition \ref{lemma_dnmap_transversal_cylinder}. Since $\mu \mapsto \Lambda_{g_0,q_0}^{M_0}(\mu)$ is a meromorphic operator valued function whose poles are contained in $\{\lambda_1,\lambda_2,\ldots\}$ \cite[Lemma 4.5]{KKL}, this information determines $\Lambda_{g_0,q_0}^{M_0}(\mu)$ for all $\mu$ in the complex plane by analytic continuation. This is equivalent to knowing the DN map for the wave equation $\partial_t^2 - \Delta_{g_0} + q_0$ in $M_0 \times \{t > 0\}$ \cite[Chapter 4]{KKL}. The boundary control method then allows to construct a manifold isometric to $(M_0,g_0)$ and the potential $q_0$ from the DN map for the wave equation. See \cite{KKL}, \cite{KKLM} for more details.
\end{proof}

\section{Calder{\'o}n problem in a cylinder: continuous spectrum} \label{sec_bcmethod_second}

Assume the conditions in the first paragraph of Section \ref{sec_bcmethod_first}. We will next consider the case when $\lambda$ is in the continuous spectrum $[\lambda_1,\infty)$ but outside the set of thresholds $\{ \lambda_1, \lambda_2, \ldots \}$. In this case the Schr\"odinger equation in $T$ admits generalized eigenfunctions, and a radiation condition is required for uniqueness of solutions and for the definition of the DN map.

\begin{prop} \label{prop_wp_dirichlet_cylinder2}
Let $\lambda \in [\lambda_1, \infty) \setminus \{ \lambda_1,\lambda_2,\ldots \}$, choose $l_0 \geq 1$ so that $\lambda_{l_0} < \lambda < \lambda_{l_0+1}$, let $\delta > 1/2$, and let $m \geq 2$. Then for any $H^{m-1/2}_{\delta}(\partial T)$, the equation 
$$
(-\Delta+q_0-\lambda)u = 0 \ \ \text{in } T, \qquad u|_{\partial T} = f
$$
has a unique solution $u \in H^m_{-\delta}(T)$ satisfying the outgoing radiation condition
$$
(\partial_t \mp i\sqrt{\lambda-\lambda_l}) \widetilde{u}(t,l) \to 0 \quad \text{as $t \to \pm \infty$ for all $1 \leq l \leq l_0$}.
$$
If $f \in C^{\infty}_c(\partial T)$, then $u \in C^{\infty}(T)$ and there is a linear map 
$$
\Lambda_{g_0,q_0}^T(\lambda): C^{\infty}_c(\partial T) \to C^{\infty}(\partial T), \ \ f \mapsto \partial_{\nu} u|_{\partial T}.
$$
For any $\delta > 1/2$, this map extends as a bounded linear map 
$$
\Lambda_{g_0,q_0}^T(\lambda): H^{m-1/2}_{\delta}(\partial T) \to H^{m-3/2}_{-\delta}(\partial T).
$$
\end{prop}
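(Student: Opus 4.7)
The plan is to mirror the strategy of Lemma \ref{lemma_wpcylinder1}, with the added complication that for the low modes $l \leq l_0$ the transversal ODE in $t$ has oscillatory, non-$L^2$ homogeneous solutions, so a limiting absorption principle and the radiation condition must replace the decay-based argument used there.

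First I would lift the boundary data: since $\partial M_0$ is compact and the weight depends only on $t$, a standard bounded extension operator gives $\widetilde{f} \in H^m_{\delta}(T)$ with $\widetilde{f}|_{\partial T} = f$. Writing $u = \widetilde{f} + v$ reduces matters to finding $v \in H^m_{-\delta}(T)$ with $v|_{\partial T}=0$ solving
\begin{equation*}
(-\Delta+q_0-\lambda)v = F := -(-\Delta+q_0-\lambda)\widetilde{f} \in H^{m-2}_{\delta}(T),
\end{equation*}
and satisfying the outgoing radiation condition at $\pm\infty$.

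For the reduced problem I would expand in the transversal eigenbasis, $v(t,x) = \sum_l \widetilde{v}(t,l)\phi_l(x)$, which automatically enforces the zero Dirichlet condition since each $\phi_l \in H^1_0(M_0)$. Each mode then satisfies the scalar ODE $(-\partial_t^2 + \lambda_l - \lambda)\widetilde{v}(\cdot,l) = \widetilde{F}(\cdot,l)$ on $\mR$. For $l > l_0$ set $\kappa_l = \sqrt{\lambda_l-\lambda}>0$ and take the unique $L^2$ solution obtained by convolution with $(2\kappa_l)^{-1}e^{-\kappa_l|t|}$; for $l \leq l_0$ set $k_l=\sqrt{\lambda-\lambda_l}>0$ and convolve instead with the outgoing fundamental solution $G^+_l(t) = (i/2k_l)e^{ik_l|t|}$, which is built precisely so that $(\partial_t \mp ik_l)(G^+_l\ast \widetilde{F}(\cdot,l)) \to 0$ as $t \to \pm\infty$ whenever $\widetilde{F}(\cdot,l) \in L^2_{\delta}(\mR)$ with $\delta > 1/2$.

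The main technical step is the uniform weighted resolvent estimate
\begin{equation*}
\|\widetilde{v}(\cdot,l)\|_{H^2_{-\delta}(\mR)} \leq C\|\widetilde{F}(\cdot,l)\|_{L^2_{\delta}(\mR)}
\end{equation*}
with $C$ independent of $l$. For high modes this follows from the Fourier-side argument of Lemma \ref{lemma_wpcylinder1}, since $\lambda_l-\lambda$ is bounded below and positive. For the finitely many low modes it is the classical scalar limiting absorption principle: decomposing $(\eta^2-k_l^2-i0)^{-1}$ as the sum of delta distributions at $\eta=\pm k_l$ and a principal value, one uses that for $\delta>1/2$ the trace map $\widehat{g}\mapsto \widehat{g}(\pm k_l)$ is bounded from $L^2_\delta(\mR) \to \mC$ by the Sobolev embedding $H^\delta \hookrightarrow C^0$, while the PV operator is bounded on $L^2$. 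Summing over $l$ and then trading $t$-regularity for $x$-regularity through elliptic estimates on $M_0$ (exactly as in the last step of Lemma \ref{lemma_wpcylinder1}) upgrades the bound to $v \in H^m_{-\delta}(T)$.

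For uniqueness, if $u \in H^m_{-\delta}(T)$ solves the homogeneous Dirichlet problem with radiation condition, each $\widetilde{u}(\cdot,l)$ satisfies the homogeneous ODE. When $l > l_0$, the general solution $Ae^{\kappa_l t}+Be^{-\kappa_l t}$ lies in $L^2_{-\delta}(\mR)$ only when $A=B=0$; when $l \leq l_0$, $\widetilde{u}(t,l) = Ae^{ik_l t}+Be^{-ik_l t}$, and applying $(\partial_t\mp ik_l)$ and letting $t\to\pm\infty$ forces $A=B=0$ directly. Smoothness when $f \in C^\infty_c(\partial T)$ follows from standard interior and boundary elliptic regularity on finite $t$-slabs combined with the explicit structure at infinity, and the mapping property of $\Lambda^T_{g_0,q_0}(\lambda)$ then follows from the trace theorem applied to the estimate above. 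The hard part is arranging the limiting absorption estimate uniformly over all the oscillatory modes and verifying that its constant does not blow up, which is precisely why the threshold set $\{\lambda_1,\lambda_2,\ldots\}$ must be excluded: at a threshold, $k_l=0$ and the outgoing fundamental solution ceases to yield a bounded map from $L^2_\delta$ to $L^2_{-\delta}$.
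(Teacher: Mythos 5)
Your argument is correct and follows essentially the same route as the paper. The paper factors the work through an auxiliary Lemma \ref{lemma_wpcylinder2}, which proves existence and uniqueness for the inhomogeneous equation $(-\Delta+q_0-\lambda)w = F$ in $T$ with $w|_{\partial T}=0$, decomposing $F$ into its low-mode ($l\le l_0$) and high-mode ($l>l_0$) projections and handling them with the outgoing scalar resolvent $R_0(\lambda-\lambda_l+i0)$ and the Fourier-side argument of Lemma \ref{lemma_wpcylinder1}, respectively; the proof of Proposition \ref{prop_wp_dirichlet_cylinder2} is then the short extension-and-reduction step you describe. You have inlined the content of that lemma, which is fine, and the mode-by-mode construction, the radiation condition, the LAP mechanism (PV decomposition plus the trace lemma $\widehat g\mapsto\widehat g(\pm k_l)$ bounded on $H^\delta$ for $\delta>1/2$), and the uniqueness argument are all exactly what the paper uses. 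One small remark: you describe ``arranging the limiting absorption estimate uniformly over all the oscillatory modes'' as the hard part, but since there are only the finitely many modes $1\le l\le l_0$ in the oscillatory range, uniformity over them is automatic; the genuine uniformity issue is over the infinitely many high modes, and there the decay estimate from Lemma \ref{lemma_wpcylinder1} applies verbatim because $\lambda_l-\lambda\ge\lambda_{l_0+1}-\lambda>0$. The exclusion of thresholds enters, as you correctly note, because $k_l=0$ would make the outgoing kernel $(i/2k_l)e^{ik_l|t|}$ fail to define a bounded map $L^2_\delta\to L^2_{-\delta}$.
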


Recall that when $\lambda \in \mC \setminus [\lambda_1,\infty)$, the main point in the reduction from the Calder\'on problem in the cylinder to the boundary control method was Proposition \ref{lemma_dnmap_transversal_cylinder}. This result states that 
$$
e^{ikt }\Lambda_{g_0,q_0}^{M_0}(\lambda-k^2) h = \Lambda_{g_0,q_0}^T(\lambda) (e^{ikt} h|_{\partial T}), \quad h \in H^{3/2}(\partial M_0).
$$
This identity does not directly generalize to the case where $\lambda$ is in the continuous spectrum, because the boundary value $e^{ikt} h|_{\partial T}$ on the right hand side is not in $H^{3/2}_{\delta}(\partial T)$ for $\delta > 1/2$. However, by using suitable cutoff and averaging arguments we can still recover the transversal DN maps from $\Lambda_{g_0,q_0}^T(\lambda)$.

\begin{prop}  \label{lemma_dnmap_transversal_cylinder2}
Let $\lambda \in [\lambda_1, \infty) \setminus \{ \lambda_1,\lambda_2,\ldots \}$, let $k \in \mR$, and assume that $\lambda-k^2 \notin \{ \lambda_1,\lambda_2,\ldots \}$. There is a family $(\Psi_R)_{R \geq 1} \subset C^{\infty}_c(\mR)$ with $\Psi_R(t) = 1$ for $\abs{t} \leq R$, such that 
$$
e^{ikt} \Lambda_{g_0,q_0}^{M_0}(\lambda-k^2) h = \lim_{R \to \infty} \frac{1}{R-1} \int_1^R \Lambda_{g_0,q_0}^T(\lambda) (e^{ikt} \Psi_{R'}(t) h|_{\partial T}) \,dR'
$$
pointwise on $\partial T$ for any $h \in C^{\infty}(\partial M_0)$.
\end{prop}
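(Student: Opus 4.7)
I will pick $\Psi_R(t)=\chi(|t|-R)$ with $\chi\in C^\infty(\mR)$ equal to $1$ on $(-\infty,0]$ and to $0$ on $[1,\infty)$, so that $\Psi_R\equiv 1$ on $[-R,R]$, $\supp\Psi_R\subset[-R-1,R+1]$, and the derivatives $\partial_t^j\Psi_R$ are uniformly bounded in $R$ and supported in $\{R\leq|t|\leq R+1\}$. Let $v_h\in C^\infty(M_0)$ be the unique solution of $(-\Delta_{g_0}+q_0-(\lambda-k^2))v_h=0$ with $v_h|_{\partial M_0}=h$, and set $u(t,x):=e^{ikt}v_h(x)$. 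Then $u$ solves $(-\Delta+q_0-\lambda)u=0$ in $T$ with trace $e^{ikt}h$, but it is \emph{not} outgoing. Let $u_R$ be the outgoing solution from Proposition~\ref{prop_wp_dirichlet_cylinder2} with trace $f_R:=e^{ikt}\Psi_R h$, and decompose $u_R=\Psi_R u+w_R$. Then $w_R|_{\partial T}=0$, $w_R$ is outgoing (as $\Psi_R u$ has compact $t$-support), and
\[
(-\Delta+q_0-\lambda)w_R=-F_R,\qquad F_R=-2(\partial_t\Psi_R)(\partial_t u)-(\partial_t^2\Psi_R)u,
\]
with $\supp F_R\subset\{R\leq|t|\leq R+1\}$. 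Because $\nu$ is tangent to $\partial_t$,
\[
\Lambda^T_{g_0,q_0}(\lambda)f_R=\Psi_R(t)\,e^{ikt}\,\Lambda^{M_0}_{g_0,q_0}(\lambda-k^2)h+\partial_\nu w_R|_{\partial T}.
\]
At a fixed $(t,y)\in\partial T$, $\Psi_{R'}(t)=1$ once $R'\geq|t|$, so $\tfrac{1}{R-1}\int_1^R\Psi_{R'}(t)\,dR'\to 1$ and the claim reduces to
\[
\frac{1}{R-1}\int_1^R\partial_\nu w_{R'}|_{\partial T}(t,y)\,dR'\longrightarrow 0\quad\text{as }R\to\infty.
\]

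I would next expand $w_R(t,\cdot)=\sum_l\widetilde w_R(t,l)\phi_l$ in the Dirichlet eigenbasis on $M_0$, whose coefficients satisfy the one-dimensional ODEs
\[
(-\partial_t^2+\lambda_l-\lambda)\widetilde w_R(t,l)=(2ik\partial_t\Psi_R+\partial_t^2\Psi_R)e^{ikt}d_l,\qquad d_l:=(v_h,\phi_l)_{L^2(M_0)},
\]
on $\mR$ with outgoing conditions at $\pm\infty$ (using $\partial_t u=iku$). For $l>l_0$, $\lambda_l-\lambda>0$ and the outgoing solution is convolution against the decaying Green's function $\tfrac{1}{2\sqrt{\lambda_l-\lambda}}e^{-\sqrt{\lambda_l-\lambda}|t|}$; at any fixed $t$ the mode is exponentially small in $R$, and polynomial-in-$\lambda_l$ bounds on $|d_l|$ (from smoothness of $v_h$) and on $\norm{\partial_\nu\phi_l}_{L^\infty(\partial M_0)}$ render the tail $\sum_{l>l_0}$ of contributions to $\partial_\nu w_R|_{\partial T}(t,y)$ negligible as $R\to\infty$. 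For $l\leq l_0$ (finitely many indices) set $k_l:=\sqrt{\lambda-\lambda_l}>0$; the outgoing Green's function is $\tfrac{i}{2k_l}e^{ik_l|t-s|}$, and for $|t|<R$ the sign of $t-s$ is determined by $\mathrm{sgn}(s)$ on $\supp F_R$, yielding
\[
\widetilde w_R(t,l)=B_R^+(l)\,e^{-ik_l t}+B_R^-(l)\,e^{ik_l t},
\]
with
\[
B_R^+(l)=\frac{id_l}{2k_l}\int_R^{R+1}e^{i(k+k_l)s}(2ik\partial_s\Psi_R+\partial_s^2\Psi_R)\,ds
\]
and an analogous formula for $B_R^-(l)$ over $[-R-1,-R]$ with phase $e^{i(k-k_l)s}$.

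Finally, the change of variables $u=s-R$ in $B_R^+$ and $u=-s-R$ in $B_R^-$, exploiting that $\Psi_R$ is an $R$-translate of $\chi$, gives the factorized closed forms
\[
B_R^+(l)=c_l^+e^{i(k+k_l)R},\qquad B_R^-(l)=c_l^-e^{i(k_l-k)R},
\]
where $c_l^\pm$ depend only on $k$, $k_l$, $d_l$, $\chi$ but not on $R$. The hypothesis $\lambda-k^2\notin\{\lambda_j\}$ is equivalent to $k\pm k_l\neq 0$ for every $l\leq l_0$, and the elementary estimate
\[
\frac{1}{R-1}\int_1^R e^{i\alpha R'}\,dR'=\frac{e^{i\alpha R}-e^{i\alpha}}{i\alpha(R-1)}=O(R^{-1}),\qquad \alpha\neq 0,
\]
applied to $\alpha=k+k_l$ and $\alpha=k_l-k$ shows that each of the finitely many $l\leq l_0$ contributions averages to zero. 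Together with the exponential tail estimate this gives the required pointwise limit. The main difficulty I anticipate is precisely the non-resonance assumption: at a resonance $k=\pm k_l$ the coefficient $B_R^\pm(l)$ would in fact be independent of $R$ (a quasi-bound contribution at threshold), and its average would not vanish; this is exactly why the hypothesis $\lambda-k^2\notin\{\lambda_1,\lambda_2,\ldots\}$ appears. A secondary technical point is the justification of termwise manipulation of the eigenseries for $\partial_\nu w_R|_{\partial T}$, which follows from the regularity $w_R\in H^m_{-\delta}(T)$ granted by Proposition~\ref{prop_wp_dirichlet_cylinder2} together with Sobolev trace and polynomial spectral bounds.
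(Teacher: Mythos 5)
Your proof is correct, and it takes a genuinely different route from the paper's. The paper splits the outgoing solution $u(\cdot;f_R)$ into its low-mode part $u_1$ (propagating modes $l\le l_0$) and high-mode part $u_2$ (evanescent modes $l>l_0$), handles $u_2$ by proving norm-convergence $u_2(\cdot;f_R)\to u_2(\cdot;f)$ in $H^m_\mu(T)$ and identifying the limit with $e^{ikt}v_2$ by uniqueness, and handles $u_1$ by writing out the explicit $1$D outgoing Green's function, replacing $\Psi_R$ by the sharp cutoff $\psi_R$ up to an $L^1$ error, computing the truncated oscillatory integral, and averaging. To make the $u_2$-norm argument work, the paper needs $\|\Psi_R-1\|_{H^m_\mu(\R)}\to 0$, which forces the $\Psi_R$ to have $R^{-\alpha}$-decaying derivatives. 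You instead subtract off $\Psi_R\,e^{ikt}v_h$ directly, so the "main term" $\Psi_R(t)\,e^{ikt}\Lambda^{M_0}_{g_0,q_0}(\lambda-k^2)h$ appears immediately (up to the trivial observation that $\Psi_{R'}(t)=1$ for $R'>|t|$), and the remainder $w_R$ carries a compactly supported source in the annulus $\{R\le|t|\le R+1\}$. That localization of the source is what makes the evanescent modes exponentially small at fixed $t$ (the paper instead treats them collectively via weighted-Sobolev estimates), and it lets you use $R$-uniform cutoffs, which is cleaner. For the propagating modes, both proofs land on the same oscillatory-integral averaging, and your analysis makes the role of the hypothesis $\lambda-k^2\notin\{\lambda_l\}$ (i.e.\ $k\neq\pm k_l$) particularly transparent. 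Two minor points you should address explicitly: (i) the regularity $w_R\in H^m_{-\delta}(T)$ is not directly from Proposition~\ref{prop_wp_dirichlet_cylinder2} (which applies to $u_R$), but follows since $w_R=u_R-\Psi_R u$ with $\Psi_R u$ smooth and compactly supported; and (ii) the averaging integral runs over $R'\in[1,R]$, and for $R'\le|t|$ your explicit formulas for $\widetilde w_{R'}(t,l)$ do not apply, so you need the (easy) remark that the integrand is locally bounded in $R'$ so the contribution from $[1,|t|]$ is $O(1/R)$.
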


We now obtain an extension of Theorem \ref{thm_calderon_cylinder_reconstruction} to the case where $\lambda$ is in the continuous spectrum (but not in the set of thresholds). This result was stated as Theorem \ref{thm_calderon_cylinder_reconstruction2} in the introduction.

\begin{proof}[Proof of Theorem \ref{thm_calderon_cylinder_reconstruction2}]
The proof is exactly the same as that of Theorem \ref{thm_calderon_cylinder_reconstruction}, except that the use of Proposition \ref{lemma_dnmap_transversal_cylinder} is replaced by Proposition \ref{lemma_dnmap_transversal_cylinder2}.
\end{proof}

We now move to the proofs of Propositions \ref{prop_wp_dirichlet_cylinder2} and \ref{lemma_dnmap_transversal_cylinder2}. The first step is an existence and uniqueness result for the inhomogeneous Schr\"odinger equation in the cylinder.

\begin{lemma} \label{lemma_wpcylinder2}
Let $\lambda \in [\lambda_1, \infty) \setminus \{ \lambda_1,\lambda_2,\ldots \}$ and choose $l_0 \geq 1$ so that $\lambda_{l_0} < \lambda < \lambda_{l_0+1}$. Let $\delta > 1/2$, let $\mu \in \mR$, and let $m \geq 0$ be an integer. For any $F = F_1 + F_2$ where $F_1 \in H^m_{\delta}(T)$, $F_2 \in H^m_{\mu}(T)$ and 
\begin{gather*}
F_1(t,x) = \sum_{l=1}^{l_0} \widetilde{F}_1(t,l) \phi_l(x), \qquad F_2(t,x) = \sum_{l=l_0+1}^{\infty} \widetilde{F}_2(t,l) \phi_l(x),
\end{gather*}
there is a solution $u = u_1 + u_2$ of the equation 
$$
(-\partial_t^2 - \Delta_{g_0} + q_0 - \lambda) u = F \quad \text{in } T
$$
where $u_1 \in H^{m+2}_{-\delta}(T) \cap H^1_{-\delta,0}(T)$ and $u_2 \in H^{m+2}_{\mu}(T) \cap H^1_{\mu,0}(T)$ are of the form $u_1 = \sum_{l=1}^{l_0} \widetilde{u}_1(\,\cdot\,,l) \phi_l $ and $u_2 = \sum_{l=l_0+1}^{\infty} \widetilde{u}_2(\,\cdot\,,l) \phi_l$. Further, 
$$
\norm{u_1}_{H^{m+2}_{-\delta}(T)} \leq C \norm{F_1}_{H^m_{\delta}(T)}, \quad \norm{u_2}_{H^{m+2}_{\mu}(T)} \leq C \norm{F_2}_{H^m_{\mu}(T)}.
$$
The solution is unique up to an element of the form 
$$
\sum_{l=1}^{l_0} c_l^{\pm} e^{\pm i \sqrt{\lambda-\lambda_l} t} \phi_l(x)
$$
where $c_l^{\pm}$ are constants. If one assumes the outgoing radiation condition %$$
%\widetilde{u}_1(\,\cdot\,,l) = R_0(\lambda-\lambda_l+i0) \widetilde{F}_1(\,\cdot\,,l), \quad 1 \leq l \leq l_0,
%$$
%or equivalently, satisfying 
$$
(\partial_t \mp i\sqrt{\lambda-\lambda_l}) \widetilde{u}(t,l) \to 0 \quad \text{as $t \to \pm \infty$ for all $1 \leq l \leq l_0$},
$$
then the solution $u$ is unique.
\end{lemma}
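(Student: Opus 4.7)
The plan is to diagonalize in the transversal eigenbasis $\{\phi_l\}$ of $-\Delta_{g_0}+q_0$ on $M_0$. Writing $\widetilde{v}(t,l) = (v(t,\cdot),\phi_l)_{L^2(M_0)}$, the equation decouples into the family of ODEs
\begin{equation*}
(-\partial_t^2 + \lambda_l - \lambda)\widetilde{u}(t,l) = \widetilde{F}(t,l), \qquad l = 1, 2, \ldots,
\end{equation*}
and the decomposition $F = F_1 + F_2$ amounts to splitting this family at $l = l_0$. For the evanescent block $F_2$ (modes $l \geq l_0+1$) one has $\lambda_l - \lambda \geq \lambda_{l_0+1}-\lambda > 0$, so the Fourier multiplier $(\eta^2 + \lambda_l - \lambda)^{-1}$ is smooth on $\mR$ with all derivatives bounded uniformly in $l$. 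The Fourier transform argument of Lemma \ref{lemma_wpcylinder1} then applies verbatim and produces $u_2 \in H^{m+2}_{\mu}(T) \cap H^1_{\mu,0}(T)$ with the stated bound.

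The propagating block $F_1$ only involves the finitely many modes $l = 1, \dots, l_0$ with $k_l = \sqrt{\lambda - \lambda_l} > 0$, and here the multiplier has real singularities at $\eta = \pm k_l$, so a limiting absorption principle is needed. I would construct $\widetilde{u}_1(\cdot,l)$ explicitly via the outgoing free Green's function
\begin{equation*}
G_l(t) = \frac{i}{2k_l}\, e^{ik_l|t|}, \qquad (-\partial_t^2 - k_l^2)G_l = \delta_0,
\end{equation*}
by setting $\widetilde{u}_1(t,l) = (G_l \ast \widetilde{F}_1(\cdot,l))(t)$. The weighted bound
\begin{equation*}
\|\widetilde{u}_1(\cdot,l)\|_{H^{m+2}_{-\delta}(\mR)} \leq C \|\widetilde{F}_1(\cdot,l)\|_{H^m_{\delta}(\mR)}, \qquad \delta > 1/2,
\end{equation*}
follows from the split formula
\begin{equation*}
\widetilde{u}_1(t,l) = \frac{i}{2k_l}e^{ik_l t}\!\!\int_{-\infty}^{t}\!\! e^{-ik_l s}\widetilde{F}_1(s,l)\,ds + \frac{i}{2k_l}e^{-ik_l t}\!\!\int_{t}^{\infty}\!\! e^{ik_l s}\widetilde{F}_1(s,l)\,ds,
\end{equation*}
since $\delta > 1/2$ makes both inner integrals uniformly bounded by Cauchy--Schwarz, giving $\widetilde{u}_1(\cdot,l) \in L^\infty(\mR) \cap L^2_{-\delta}(\mR)$. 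Higher regularity follows by differentiating the split formula and invoking the equation $\partial_t^2 \widetilde{u}_1 = -k_l^2 \widetilde{u}_1 - \widetilde{F}_1$, and the sum over the finite range $1 \leq l \leq l_0$ preserves the bounds.

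To verify the outgoing radiation condition, a direct computation on the split formula gives
\begin{equation*}
(\partial_t - ik_l)\widetilde{u}_1(t,l) = e^{-ik_l t}\int_t^{\infty} e^{ik_l s}\widetilde{F}_1(s,l)\,ds,
\end{equation*}
and the tail integral tends to zero as $t \to +\infty$ because $\widetilde{F}_1(\cdot,l) \in L^1(\mR)$ (again by $\delta > 1/2$); the $-\infty$ case is symmetric. For uniqueness I would test the homogeneous equation mode by mode. For $l \geq l_0+1$ the real nonvanishing of $\eta^2 + \lambda_l - \lambda$ together with the $H^1_\mu$ requirement forces $\widetilde{u}_2(\cdot,l) \equiv 0$ exactly as in Lemma \ref{lemma_wpcylinder1}. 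For $l \leq l_0$ the homogeneous ODE has the two-parameter family of bounded solutions $a_l e^{ik_l t} + b_l e^{-ik_l t}$, all of which lie in $H^{m+2}_{-\delta}(T) \cap H^1_{-\delta,0}(T)$ for $\delta > 1/2$ (since $\phi_l \in H^1_0(M_0)$); this accounts for the claimed non-uniqueness. Imposing the outgoing condition, the identity $(\partial_t - ik_l)(a_l e^{ik_l t}+b_l e^{-ik_l t}) = -2ik_l b_l e^{-ik_l t}$ forces $b_l = 0$, and the symmetric computation at $-\infty$ forces $a_l = 0$, giving full uniqueness.

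The main technical obstacle is the 1D limiting absorption principle for the outgoing convolution with $G_l$: carefully tracking the weighted $L^2$ estimate at the sharp exponent $\delta > 1/2$ and its propagation to higher Sobolev orders via the equation. Everything else is bookkeeping that mirrors the Fourier multiplier analysis of Lemma \ref{lemma_wpcylinder1}, the principal new feature being only the explicit mode-by-mode treatment of the finitely many propagating frequencies.
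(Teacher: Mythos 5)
Your proof is correct and follows the same overall architecture as the paper's: diagonalize in the eigenbasis $\{\phi_l\}$, treat the evanescent block $l\geq l_0+1$ by the Fourier multiplier argument from Lemma~\ref{lemma_wpcylinder1}, treat the finitely many propagating modes via the outgoing Green's function $G_l(t)=\frac{i}{2k_l}e^{ik_l|t|}$, verify the radiation condition, and prove uniqueness by the mode-by-mode ODE analysis. The one genuine point of departure is how you justify the weighted bound $\widetilde{u}_1(\cdot,l)\in H^{m+2}_{-\delta}(\mR)$ for the propagating block: the paper simply cites Agmon's limiting absorption principle, while you give a self-contained elementary proof via the split formula $G_l\ast\widetilde{F}_1 = \frac{i}{2k_l}e^{ik_lt}\int_{-\infty}^te^{-ik_ls}\widetilde{F}_1\,ds + \frac{i}{2k_l}e^{-ik_lt}\int_t^\infty e^{ik_ls}\widetilde{F}_1\,ds$ together with Cauchy--Schwarz (using $\delta>1/2$ to get $\widetilde{F}_1(\cdot,l)\in L^1$), then bootstrap higher $t$-regularity from the ODE. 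This is cleaner and more transparent in 1D than invoking the general LAP, and it has the fringe benefit of making the radiation condition $(\partial_t-ik_l)\widetilde{u}_1 = e^{-ik_lt}\int_t^\infty e^{ik_ls}\widetilde{F}_1\,ds\to 0$ immediate from the same formula. One small inaccuracy: you say the argument of Lemma~\ref{lemma_wpcylinder1} applies ``verbatim'' to produce $u_2\in H^{m+2}_\mu$ for general $m\geq 0$, but that lemma only treats $F\in L^2_\delta$ giving $u\in H^2_\delta$ (i.e.\ $m=0$). The paper closes this gap by a separate induction on $m$ using elliptic regularity in $M_0$; in your framework you could alternatively observe that the Fourier multiplier $(\eta^2+\lambda_l-\lambda)^{-1}$ and its $\eta$-derivatives are uniformly bounded over $l\geq l_0+1$, so the same multiplier estimates applied to $\partial_t^jF_2$ for $j\leq m$ give the higher-order bound; either way, a sentence is needed beyond ``verbatim.''
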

\begin{proof}
For uniqueness, suppose that $u \in H^1_{r,0}(T)$ for some real number $r$ solves $(-\partial_t^2-\Delta_{g_0}+q_0-\lambda)u = 0$ in $T$. Then the partial Fourier coefficients $\widetilde{u}(t,l)$ satisfy 
$$
(-\partial_t^2 + \lambda_l - \lambda) \widetilde{u}(\,\cdot\,,l) = 0  \quad \text{in $\mR$, for all $l \geq 1$}.
$$
If $l \geq l_0 + 1$, then $\lambda_l-\lambda > 0$ and by taking Fourier transforms we see that the only tempered distribution solving this equation on $\mR$ is zero. Thus $\widetilde{u}(\,\cdot\,,l) = 0$ for $l \geq l_0+1$. If $1 \leq l \leq l_0$, then $\lambda_l - \lambda < 0$ and an easy argument shows that the only distributional solution of the above equation for $\widetilde{u}(\,\cdot\,,l)$ is 
$$
\widetilde{u}(t,l) = c_l^+ e^{i\sqrt{\lambda-\lambda_l} t} + c_l^- e^{-i\sqrt{\lambda-\lambda_l} t}
$$
for some constants $c_l^{\pm}$. If the radiation condition holds it follows that $c_l^{\pm} = 0$, concluding the proof of uniqueness.

For existence, let first $m=0$. We define 
$$
\widetilde{u}_2(\eta,l) = \mF_{\eta}^{-1} \left\{ \frac{1}{\eta^2+\lambda_l-\lambda} \widehat{F}_2(\eta,l) \right\}, \quad l \geq l_0+1.
$$
Since $\lambda_l-\lambda > 0$ for $l \geq l_0+1$, the proof of Lemma \ref{lemma_wpcylinder1} shows that the function $u_2 = \sum_{l=l_0+1}^{\infty} \widetilde{u}_2(\,\cdot\,,l) \phi_l$ solves the equation $(-\partial_t^2-\Delta_{g_0}+q_0-\lambda)u_2 = F_2$ and has the required properties. The function $u_1$ is obtained as $u_1 = \sum_{l=1}^{l_0} \widetilde{u}_1(\,\cdot\,,l) \phi_l$, where $\widetilde{u}_1(\,\cdot\,,l)$ should satisfy 
$$
(-\partial_t^2 + \lambda_l - \lambda) \widetilde{u}_1(\,\cdot\,,l) = \widetilde{F}_1(\,\cdot\,,l)  \quad \text{in $\mR$, for $1 \leq l \leq l_0$}.
$$
We choose the solution 
$$
\widetilde{u}_1(\,\cdot\,,l) = R_0(\lambda-\lambda_l+i0) \widetilde{F}_1(\,\cdot\,,l)
$$
where $R_0(z) = (-\partial_t^2 - z)^{-1}$ is the resolvent of the Laplacian on the real line, and $R_0(s + i0)$ is the outgoing resolvent at energy $s > 0$. If $k > 0$ one has the well known formula (which follows from a direct computation) 
$$
(R_0(k^2+i0)f)(t) = \int_{-\infty}^{\infty} G(t-t') f(t') \,dt', \quad G(t) = \frac{i}{2k} e^{ik\abs{x}}.
$$
Agmon's limiting absorption principle (see \cite[Section 14.3]{H3}, \cite[Section XIII.8]{ReedSimon}) gives that $\widetilde{u}_1(\,\cdot\,,l) \in H^2_{-\delta}(\mR)$, and then $u_1$ is a solution of $(-\partial_t^2-\Delta_{g_0}+q_0-\lambda)u_1 = F_1$ with the required properties (also satisfying the outgoing radiation condition). This concludes the proof for the case $m=0$.

We show the case of general $m$ by induction: the statement has been proved for $m=0$, and we assume that it holds for values up to $m-1$. Let $F_2 \in H^m_{\mu}(T)$ (the proof for $F_1$ is analogous). Then also $F_2 \in H^{m-1}_{\mu}(T)$, and by the inductive hypothesis there is a unique solution $u \in H^{m+1}_{\mu}(T)$ of 
$$
(-\partial_t^2-\Delta_{g_0}+q_0-\lambda)u = F_2 \text{ in } T, \quad u|_{\partial T} = 0.
$$
Then $\partial_t^j u$ satisfies the same equation with right hand side $\partial_t^j F_2 \in H^{m-j}_{\mu}(T)$, so we have $\partial_t^j u \in H^{m+2-j}_{\mu}(T)$ for $j \geq 1$ with 
$$
\norm{\partial_t^j u}_{H^{m+2-j}_{\mu}(T)} \leq C \norm{F_2}_{H^{m}_{\mu}(T)}.
$$
The equation implies that for almost every $t$, 
$$
-\Delta_{g_0} u(t,\,\cdot\,) = (\partial_t^2-q_0+\lambda) u(t,\,\cdot\,) + F_2(t,\,\cdot\,) \text{ in } M_0.
$$
Since also $u(t,\,\cdot\,)|_{\partial M_0} = 0$, elliptic regularity implies that 
$$
\norm{u(t,\,\cdot\,)}_{H^{m+2}(M_0)} \leq C \norm{(\partial_t^2-q_0+\lambda) u(t,\,\cdot\,) + F_2(t,\,\cdot\,)}_{H^m(M_0)}.
$$
Multiplying by $\br{t}^{\mu}$ and taking the $L^2$ norm over $\mR$, we obtain 
$$
\norm{u}_{L^2_{\mu}(\mR;H^{m+2}(M_0))} \leq C (\norm{u}_{H^m_{\mu}(T)}+\norm{F_2}_{H^m_{\mu}(T)}) \leq C \norm{F_2}_{H^m_{\mu}(T)}.
$$
Thus $u, \partial_t u, \nabla_x u \in H^{m+1}_{\mu}(T)$, showing that $u \in H^{m+2}_{\mu}(T)$ with the right bounds.
\end{proof}

\begin{proof}[Proof of Proposition \ref{prop_wp_dirichlet_cylinder2}]
Let $\delta > 1/2$, and suppose that $f \in H^{m-1/2}_{\delta}(\partial T)$. Choose $E^f \in H^m_{\delta}(T)$ with $E^f|_{\partial T} = f$ and $\norm{E^f}_{H^m_{\delta}(T)} \leq C \norm{f}_{H^{m-1/2}_{\delta}(\partial T)}$. We look for a solution of 
$$
(-\Delta+q_0-\lambda)u = 0 \ \ \text{in } T, \qquad u|_{\partial T} = f
$$
having the form $u = E^f + w$. Thus, we obtain the equivalent equation 
$$
(-\Delta+q_0-\lambda)w = F \ \ \text{in } T, \qquad w|_{\partial T} = 0
$$
where $F = -(-\Delta+q_0-\lambda) E^f$. Since $\norm{F}_{H^{m-2}_{\delta}(T)} \leq C \norm{f}_{H^{m-1/2}_{\delta}(\partial T)}$, Lemma \ref{lemma_wpcylinder2} shows that there is a unique solution $w \in H^m_{-\delta}(T) \cap H^1_{-\delta,0}(T)$ satisfying  
$$
(\partial_t \mp i\sqrt{\lambda-\lambda_l}) \widetilde{w}(t,l) \to 0 \quad \text{as $t \to \pm \infty$ for all $1 \leq l \leq l_0$}.
$$
One also has $\norm{w}_{H^m_{-\delta}(T)} \leq C \norm{f}_{H^{m-1/2}_{\delta}(\partial T)}$. Thus we have a unique solution $u = E^f + w \in H^m_{-\delta}(T)$ to the original problem, satisfying the same radiation condition as $w$ since $(\partial_t \mp i\sqrt{\lambda-\lambda_l}) \widetilde{E}^f(\,\cdot\,,l) \in H^1_{\delta}(\mR)$ for all $l$. We also have 
$$
\norm{u}_{H^m_{-\delta}(T)} \leq C \norm{f}_{H^{m-1/2}_{\delta}(\partial T)}.
$$
The result follows.
\end{proof}

Before the proof of Proposition \ref{lemma_dnmap_transversal_cylinder2}, we record some further properties of solutions of the Schr\"odinger equation in the cylinder having boundary values of the form 
$$
f(t,x) = a(t)h(x)
$$
where $h \in H^{m-1/2}(\partial M_0)$. Given $v\in L^2_\mu (\R\times M_0)$, we define 
\begin{gather*}
P_1 v(t,x) = \sum_{l=1}^{l_0} \widetilde{v}(t,l) \phi_l(x), \qquad P_2 v(t,x) = \sum_{l=l_0+1}^{\infty} \widetilde{v}(t,l) \phi_l(x),
\end{gather*}
where $\tilde v(t,l)= (v(t,\cdotp),\phi_l)_{L^2(M_0)}$.

\begin{lemma} \label{lemma_solutions_cylinder_properties}
Assume that $\lambda \in [\lambda_1, \infty) \setminus \{ \lambda_1,\lambda_2,\ldots \}$, choose $l_0 \geq 1$ so that $\lambda_{l_0} < \lambda < \lambda_{l_0+1}$, let $m \geq 2$, and let $E_0: H^{m-1/2}(\partial M_0) \to H^m(M_0)$ be a bounded extension operator. Let $f(t,x) = a(t) h(x)$ where $a \in H^m_{\mu}(\mR)$ with $\mu \in \mR$ and $h \in H^{m-1/2}(\partial M_0)$, and define 
$$
E^f(t,x) = a(t) E_0 h(x), \qquad F^f=- (-\partial_t^2 - \Delta_{g_0} + q_0 - \lambda) E^f.
$$
Also define 
$$
E_j(\cdotp;f) = P_j E^f, \qquad F_j(\cdotp;f) = P_j F^f.
$$

If $\mu > 1/2$, denote by $u(t,x;f)$ the solution of the Dirichlet problem 
\begin{gather*} %\label{eq: bnd value problem}
(-\partial_t^2 - \Delta_{g_0} + q_0 - \lambda) u = 0 \text{ in } T, \qquad u|_{\p T}=f,
\\ \nonumber
(\partial_t \mp i\sqrt{\lambda-\lambda_l}) \widetilde{u}(t,l;f) \to 0 \quad \text{as $t \to \pm \infty$ for all $1 \leq l \leq l_0$},
\end{gather*}
where $\tilde u(t,l;f)=(u(t,\cdotp;f),\phi_l)_{L^2(M_0)}$. Then we have 
$$
u = u_1 + u_2, \qquad u_j = E_j + w_j,
$$
where $w_j = w_j(\cdotp;f)$ are the solutions of 
\begin{gather*} %\label{eq: interior problem1}
(-\partial_t^2 - \Delta_{g_0} + q_0 - \lambda) w_1= F_1(\cdotp;f) \text{ in } T, \qquad w_1|_{\p T}=0, \\
(\partial_t \mp i\sqrt{\lambda-\lambda_l}) \widetilde{w}_1(t,l) \to 0 \quad \text{as $t \to \pm \infty$ for all $1 \leq l \leq l_0$},
\end{gather*}
with $\tilde w_j(t,l)= (w_j(t,\cdotp),\phi_l)_{L^2(M_0)}$, and
\begin{gather*} %\label{eq: interior problem2}
(-\partial_t^2 - \Delta_{g_0} + q_0 - \lambda) w_2= F_2(\cdotp;f) \quad \text{in } T, \qquad w_2|_{\p T}=0.
\end{gather*}
If $a \in H^m_{\delta}(\mR)$ with $\delta > 1/2$, then the equation for $w_1$ has a unique solution $w_1 \in H^m_{-\delta}(T) \cap H^1_{-\delta,0}(T)$ with $w_1 \in \hbox{Ran}(P_1)$. Similarly, if $a \in H^m_{\mu}(\mR)$ for some $\mu \in \mR$, then the equation for $w_2$ has a unique solution $w_2 \in H^m_{\mu}(T) \cap H^1_{\mu,0}(T)$ with $w_2 \in \hbox{Ran}(P_2)$. We have the norm estimates 
$$
\norm{E_j}_{H^m_{\mu}(T)} + \norm{F_j}_{H^{m-2}_{\mu}(T)} \leq C \norm{a}_{H^m_{\mu}(\mR)} \norm{h}_{H^{m-1/2}(\partial M_0)}, \quad \mu \in \mR,
$$
and 
\begin{gather*}
\norm{u_1}_{H^m_{-\delta}(T)} + \norm{w_1}_{H^m_{-\delta}(T)} \leq C \norm{a}_{H^m_{\mu}(\mR)} \norm{h}_{H^{m-1/2}(\partial M_0)}, \quad \delta > 1/2, \\
\norm{u_2}_{H^m_{\mu}(T)} + \norm{w_2}_{H^m_{\mu}(T)} \leq C \norm{a}_{H^m_{\mu}(\mR)} \norm{h}_{H^{m-1/2}(\partial M_0)}, \quad \mu \in \mR.
\end{gather*}
\end{lemma}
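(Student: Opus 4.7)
The plan is to reduce the Dirichlet problem with outgoing radiation condition to one with zero boundary data, and then invoke Lemma \ref{lemma_wpcylinder2} separately on the ranges of $P_1$ and $P_2$. First I would verify that $E^f|_{\partial T} = f$, using $(E_0 h)|_{\partial M_0} = h$, and compute
\[ F^f = a''(t)(E_0 h)(x) + a(t)(\Delta_{g_0} - q_0 + \lambda)(E_0 h)(x). \]
The weighted bound
\[ \norm{E^f}_{H^m_{\mu}(T)} + \norm{F^f}_{H^{m-2}_{\mu}(T)} \leq C \norm{a}_{H^m_{\mu}(\mR)} \norm{h}_{H^{m-1/2}(\partial M_0)} \]
is then immediate from boundedness of $E_0$ and the hypothesis $m \geq 2$, which ensures that the two $t$-derivatives landing on $a$ are absorbed by the $H^{m-2}$ norm on the left. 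Since $P_1$ has finite-dimensional image spanned by smooth eigenfunctions and $P_2 = I - P_1$, both projectors are bounded on $H^s(M_0)$ for every $s$, hence on every weighted space $H^s_{\mu}(T)$, and the corresponding bound carries over to $E_j = P_j E^f$ and $F_j = P_j F^f$.

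Next I would observe that both $P_1$ and $P_2$ commute with $\mathcal{L} := -\partial_t^2 - \Delta_{g_0} + q_0 - \lambda$: commutation with $\partial_t^2$ is trivial since $P_j$ acts only in the transversal variable, and commutation with $-\Delta_{g_0} + q_0$ is the defining property of the spectral projectors. Writing $u = E^f + w$, the problem for $u$ becomes $\mathcal{L} w = F^f$ with $w|_{\partial T} = 0$, and applying $P_j$ yields $\mathcal{L} w_j = F_j$ with $w_j|_{\partial T} = 0$, where $w_j := P_j w$ lies in $\hbox{Ran}(P_j)$.

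The third step is to apply Lemma \ref{lemma_wpcylinder2} with $m$ replaced by $m-2$. For $\delta > 1/2$, the source $F_1 \in H^{m-2}_{\delta}(T)$ is a finite sum over the $l_0$ resonant modes; the lemma produces a unique $w_1 \in H^m_{-\delta}(T) \cap H^1_{-\delta,0}(T)$ satisfying the outgoing radiation condition, with norm controlled by $\norm{F_1}_{H^{m-2}_{\delta}(T)}$. For arbitrary $\mu \in \mR$, the source $F_2 \in H^{m-2}_{\mu}(T)$ contains only modes with $l > l_0$, where $\lambda_l - \lambda$ is bounded below by a positive constant, so the lemma gives a unique $w_2 \in H^m_{\mu}(T) \cap H^1_{\mu,0}(T)$ with norm controlled by $\norm{F_2}_{H^{m-2}_{\mu}(T)}$. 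Setting $u_j := E_j + w_j$ yields all the asserted norm estimates. To see that $u = u_1 + u_2$ obeys the outgoing radiation condition, I would note that for $1 \leq l \leq l_0$ the Fourier coefficient $\widetilde{E}_1(t,l) \in H^m_{\delta}(\mR) \subset H^2_{\delta}(\mR)$ with $\delta > 1/2$, so it and its $t$-derivative decay to zero as $t \to \pm\infty$, while $\widetilde{w}_1(\cdot,l)$ satisfies the radiation condition by construction.

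Uniqueness of each $w_j$ comes directly from Lemma \ref{lemma_wpcylinder2}: any homogeneous solution in $\hbox{Ran}(P_2)$ has only non-resonant modes and must vanish, while within $\hbox{Ran}(P_1)$ every homogeneous solution is a combination of $e^{\pm i\sqrt{\lambda - \lambda_l} t}\phi_l$, all killed by the outgoing condition. The main obstacle I anticipate is the careful bookkeeping of weights, i.e.\ tracking that the decaying (high) modes accept any weight $\mu$ while the oscillating (low) modes force the limiting-absorption weight $-\delta$ with $\delta > 1/2$, and in particular verifying that the resonant part $E_1$ of the extension does not interfere with the radiation condition for the full solution $u$.
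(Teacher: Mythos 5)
Your proposal is correct and follows essentially the same route as the paper's proof: the tensor-product estimate for $\norm{a(t)\varphi(x)}_{H^m_\mu(T)}$, the commutation of the spectral projections $P_j$ with $\partial_t$, $\Delta_{g_0}$, and the weight, the reduction $u = E^f + w$ to a problem with zero boundary data, and the mode-by-mode invocation of Lemma \ref{lemma_wpcylinder2}. You have merely spelled out the details (including the verification that $E_1$ itself satisfies the radiation condition, since $a, a' \in H^1(\mR)$ decay at infinity) that the paper compresses into ``the standard reduction from the Dirichlet problem to an inhomogeneous problem with zero boundary values.''
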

\begin{proof}
We note the estimate 
$$
\norm{a(t) \varphi(x)}_{H^m_{\mu}(T)} \leq C \norm{a}_{H^m_{\mu}(\mR)} \norm{\varphi}_{H^m(M_0)}.
$$
Consequently 
$$
\norm{E^f}_{H^m_{\mu}(T)} + \norm{F^f}_{H^{m-2}_{\mu}(T)} \leq C \norm{a}_{H^m_{\mu}(\mR)} \norm{h}_{H^{m-1/2}(\partial M_0)}.
$$
The same estimates are true for $E_j$ and $F_j$, since the projections $P_j$ commute with $\partial_t$, $\Delta_{g_0}$ and with multiplication by $\br{t}^{\mu}$. The result now follows from Lemma \ref{lemma_wpcylinder2} and the standard reduction from the Dirichlet problem to an inhomogeneous problem with zero boundary values.
\end{proof}

\begin{proof}[Proof of Proposition \ref{lemma_dnmap_transversal_cylinder2}]
Assume that $\lambda \in [\lambda_1, \infty) \setminus \{ \lambda_1,\lambda_2,\ldots \}$, and choose $l_0 \geq 1$ so that $\lambda_{l_0} < \lambda < \lambda_{l_0+1}$. Fix $k \in \mR$ so that $\lambda-k^2$ is not a Dirichlet eigenvalue of $-\Delta_{g_0}+q_0$ in $M_0$. Also fix an integer $m > \dim(T)/2 + 1$.

We will show that for any $h \in H^{m-1/2}(\partial M_0)$, one has the pointwise limit 
$$
e^{ikt} \Lambda_{g_0,q_0}^{M_0}(\lambda-k^2) h = \lim_{R \to \infty} \frac{1}{R-1} \int_1^R \Lambda_{g_0,q_0}^T(\lambda)f_{R'} \,dR'
$$
where $f_R$ is the function on $\partial T$ given by 
$$
f_R(t,x) = e^{ikt} \Psi_{R}(t) h(x)
$$
and $\Psi_R(t) \in C_c^{\infty}(\mR)$ are suitable cutoffs. Below, we will use the notations in Lemma \ref{lemma_solutions_cylinder_properties}. For later purposes we choose the extension operator $E_0: H^{m-1/2}(\partial M_0) \to H^m(M_0)$ in Lemma \ref{lemma_solutions_cylinder_properties} to be $E_0: h \mapsto v(x;h)$, where $v(x;h)$ is the unique solution of the problem 
$$
(-\Delta_{g_0}+q_0-(\lambda-k^2)) v(x;h) = 0 \ \ \text{in } M_0, \quad v(\cdotp;h)|_{\partial M_0} = h.
$$
The proof below will make use of the splitting 
$$
u = u_1 + u_2,
$$
and also the splitting 
$$
v = v_1 + v_2
$$
where $v_j(\cdotp;h) \in H^m(M_0)$ are the projections $v_j(\cdotp;h) = Q_j v(\cdotp;h)$. Here, for $V \in L^2(M_0)$, 
\begin{gather*}
Q_1 V = \sum_{l=1}^{l_0} \widetilde{V}(l) \phi_l(x), \quad Q_2 V = \sum_{l=l_0+1}^{\infty} \widetilde{V}(l) \phi_l(x),
\end{gather*}
with $\tilde V(l)=( V, \phi_l )_{L^2(M_0)}$. In fact, we will prove that 
\begin{gather}
\lim_{R \to \infty} \frac{1}{R-1} \int_1^R \partial_{\nu} u_1(\cdotp;f_{R'}) \,dR' = e^{ikt} \partial_{\nu} v_1(\cdotp;h)|_{\partial T}, \label{averaged_eq1} \\
\lim_{R \to \infty} \frac{1}{R-1} \int_1^R \partial_{\nu} u_2(\cdotp;f_{R'}) \,dR' = e^{ikt} \partial_{\nu} v_2(\cdotp;h)|_{\partial T}. \label{averaged_eq2}
\end{gather}
Note that 
\begin{gather*}
\Lambda_{g_0,q_0}^T(\lambda) f_R = \partial_{\nu} u(t,x;f_R) \big|_{\partial T}, \\
\Lambda_{g_0,q_0}^{M_0}(\lambda-k^2) h = \partial_{\nu} v(\cdotp;h)|_{\partial M_0}.
\end{gather*}
Thus the proposition will follow immediately from \eqref{averaged_eq1} and \eqref{averaged_eq2}.

Let us next describe the cutoff functions. If $R \to \infty$, the boundary value $e^{ikt} \Psi_R(t) h(x)$ converges to $e^{ikt} h(x)$, a function in $H^{m-1/2}_{\mu}(\partial T)$ for $\mu < -1/2$. Fix some $\mu < -1/2$, and let $\psi_R(t)=1$ for $|t|<R$, $\psi_R(t)=0$ for $|t|>R$.  We approximate the functions $\psi_R(t)$ by $\Psi_R(t)\in C^\infty_c((-R-1,R+1))$ that are functions 
for which
\begin{equation} \label{eq: norms}
\lim_{R\to \infty }\|\Psi_R-\psi_R\|_{L^1(\R)}=0, \quad
\lim_{R\to \infty }\|\Psi_R-1\|_{H^m_{\mu}(\R)}= 0.
\end{equation}
Such functions can be chosen to be 
\ba
\Psi_R(t)=\left\{\begin{array}{ll}1&\hbox{for $|t|\leq R$},\\
\Phi (R^{\alpha}(|t|-R))&\hbox{for $|t|>R$},
\end{array}\right.
\ea
where  $\Phi \in C^\infty_c((-1,1))$ is equal to one near $0$, and $\alpha$ is a positive constant chosen so that $m\alpha+\mu+1/2< 0$. The norm bounds as $R \to \infty$ are valid because $\Psi_R-\psi_R$ is supported in $R \leq \abs{t} \leq R+R^{-\alpha}$, $\Psi_R-1$ is supported in $\abs{t} \geq R$, $\norm{\Psi_R}_{W^{m,\infty}} \leq C R^{m\alpha}$, and $(\int_R^{\infty} t^{2\mu} \,dt)^{1/2} \leq C R^{\mu+1/2}$.

Let us denote
\ba
f_R(t,x)=e^{ikt} \Psi_R(t) h(x),\quad f(t,x)=e^{ikt} h(x)
\ea
where $h \in H^{m-1/2}(\partial M_0)$. We will now prove \eqref{averaged_eq2}. Note that by construction we have 
$$
E^{f_R}-E^f = E^{f_R-f}
$$
and thus $E_j(\,\cdot\,,f_R)-E_j(\,\cdot\,,f) = E_j(\,\cdot\,,f_R-f)$. The function $F_j$, $w_j$, and $u_j$ satisfy a similar property. Using Lemma \ref{lemma_solutions_cylinder_properties}, we see that 
\begin{align*} %\label{eq: norms2 pre}
 &\norm{w_2(\cdotp;f_R)-w_2(\cdotp;f)}_{H^m_{\mu}(T)} = \norm{w_2(\cdotp;f_R-f)}_{H^m_{\mu}(T)} \\
 &\quad \leq C  \norm{e^{ikt}(\Psi_R-1)}_{H^m_{\mu}(\mR)} \norm{h}_{H^{m-1/2}(\partial M_0)}.
\end{align*}
Similarly,
\begin{align*}
 &\norm{E_2(\cdotp;f_R)-E_2(\cdotp;f)}_{H^m_{\mu}(T)} = \norm{E_2(\cdotp;f_R-f)}_{H^m_{\mu}(T)} \\
 &\quad \leq C \norm{e^{ikt}(\Psi_R-1)}_{H^m_{\mu}(\mR)} \norm{h}_{H^{m-1/2}(\partial M_0)}.
\end{align*}
Since $u_2 = E_2 + w_2$, the estimate (\ref{eq: norms}) implies that 
$$
\norm{u_2(\cdotp;f_R)-u_2(\cdotp;f)}_{H^m_{\mu}(T)} \to 0
$$
and consequently 
$$
\lim_{R \to \infty} \partial_{\nu} u_2(\cdotp;f_R) = \partial_{\nu} u_2(\cdotp;f)
$$
in $H^{m-3/2}_{\mu}(\partial T)$. By Sobolev embedding this limit also holds pointwise, and we also have 
\begin{equation} \label{averaged_eq3}
\lim_{R \to \infty} \frac{1}{R-1} \int_1^R \partial_{\nu} u_2(\cdotp;f_{R'}) \,dR' = \partial_{\nu} u_2(\cdotp;f).
\end{equation}

We can connect the last expression to the $v_2$ component of the time-harmonic solution $v = v(\cdotp;h)$. As in Proposition \ref{lemma_dnmap_transversal_cylinder}, we see that the function $e^{ikt} v_2(x;h)\in H^2_\mu(T)$ solves the equation $(-\partial_t^2-\Delta_{g_0}+q_0-\lambda) u = 0$ in $T$ with boundary value $e^{ikt} v_2|_{\partial T}$. Similarly, the function $u_2 = u_2(\cdotp;f)$ solves the same equation with boundary value $e^{ikt} (Q_2 E_0 h)|_{\partial T}$, where $E_0$ was the bounded extension operator. But since we chose $E_0 h(x) = v(x;h)$, the two solutions have the same boundary values and by uniqueness one has 
\ba
& &u_2(t,x;f)=e^{ikt} v_2(x;h).
\ea
Together with \eqref{averaged_eq3}, this proves \eqref{averaged_eq2}.

It remains to show the identity \eqref{averaged_eq1} concerning $u_1$ and $v_1$. Recall that 
$$
u_1(t,x;f_R) = \sum_{l=1}^{l_0} \widetilde{u}(t,l;f_R) \phi_l(x).
$$
We start by giving formulae for Fourier coefficients $\widetilde{u}(t,l;f_R)$ for $1 \leq l \leq l_0$. If $\eta \in H^{3/2}(\partial T)$ is compactly supported in the $t$ variable, we have
\begin{align*}
0&=( (-\partial_t^2 - \Delta_{g_0} + q_0 - \lambda)u(t,\cdotp;\eta),\phi_l )_{L^2(M_0)} \\
 &=(-\p_t^2+\lambda_l-\lambda)\tilde u(t,l;\eta)+\int_{\p M_0}\eta(t,y) \partial_{\nu} \phi_l(y)\,dS(y).
\end{align*}
Since $\lambda-\lambda_l > 0$ for $1 \leq l \leq l_0$, this and the radiation condition imply that 
\ba
\tilde u(t,l;\eta)=\int_{-\infty}^{\infty} G_l(t-t^\prime)\tilde \eta(t^\prime,l)\,dt^\prime,
\ea
where 
\ba
& &G_l(t)=\frac 1{2i} (\lambda-\lambda_l)^{-1/2}e^{i|t|\sqrt{\lambda-\lambda_l}},
\\
& &\tilde \eta(t,l)= \int_{\p M_0} \eta(t,y) \partial_{\nu} \phi_l(y) \,dS(y).
\ea

Consider the function $I_1(R) = I_1(t,x;R)$ in $H^{1/2}_{-\delta}(\partial T)$ for any $\delta > 1/2$, given by 
$$
I_1(R) = \partial_{\nu} u_1(t,x;f_R) = \sum_{l=1}^{l_0} \widetilde{u}(t,l;f_R) \partial_{\nu} \phi_l(x).
$$
Using the expression for Fourier coefficients above, we have 
\begin{align*}
&I_1(R) = \sum_{l=1}^{l_0} \frac{(\lambda-\lambda_l)^{-1/2}}{2i} \left[ \int_{-\infty}^{\infty} e^{i\abs{t-t'} \sqrt{\lambda-\lambda_l}} \widetilde{f}_R(t',l) \,dt' \right] \partial_{\nu} \phi_l(x) \\ 
&= \sum_{l=1}^{l_0} \frac{(\lambda-\lambda_l)^{-1/2}}{2i} \left[ \int_{\partial T} e^{i\abs{t-t'} \sqrt{\lambda-\lambda_l}} f_R(t',y) \partial_{\nu} \phi_l(y) \,dt' \,dS(y) \right] \partial_{\nu} \phi_l(x) \\
&= \sum_{l=1}^{l_0} \frac{(\lambda-\lambda_l)^{-1/2}}{2i} \left[ \int_{\partial T} e^{i\abs{t-t'} \sqrt{\lambda-\lambda_l}} e^{ikt'} \Psi_R(t') h(y) \partial_{\nu} \phi_l(y) \,dt' \,dS(y) \right] \partial_{\nu} \phi_l(x).
\end{align*}
Replacing here $\Psi_R$ by  $\psi_R$ and using the first estimate in \eqref{eq: norms} results in an $o(1)$ error in $L^{\infty}(\partial T)$ as $R \to \infty$. We thus obtain 
\begin{align*}
I_1(R) =  \sum_{l=1}^{l_0} \frac{(\lambda-\lambda_l)^{-1/2}}{2i} \left[ \int_{-R}^{R} e^{i\abs{t-t'} \sqrt{\lambda-\lambda_l}} e^{ikt'} \,dt' \right] \widetilde{h}(l) \partial_{\nu} \phi_l(x) + o(1)
\end{align*}
where 
$$
\widetilde{h}(l) = \int_{M_0} h(y) \partial_{\nu} \phi_l(y) \,dS(y).
$$
For a given $t$, we assume $R$ so large that $t \in (-R,R)$. The $t'$ integral can be computed explicitly, and we obtain 
\begin{multline*}
\int_{-R}^{R} e^{i\abs{t-t'} \sqrt{\lambda-\lambda_l}} e^{ikt'} \,dt' = \frac{2i (\lambda-\lambda_l)^{1/2}}{\lambda-\lambda_l-k^2} e^{ikt} \\
 + \frac{e^{i(k+\sqrt{\lambda-\lambda_l})R-it\sqrt{\lambda-\lambda_l}}}{i(k+\sqrt{\lambda-\lambda_l})} - \frac{e^{-i(k-\sqrt{\lambda-\lambda_l})R+it\sqrt{\lambda-\lambda_l}}}{i(k-\sqrt{\lambda-\lambda_l})}.
 \end{multline*}
The last two terms oscillate with respect to $R$, but we can remove these oscillating terms by averaging: since by assumption $k \pm \sqrt{\lambda-\lambda_l} \neq 0$, we have 
$$
\lim_{R \to \infty} \frac{1}{R-1} \int_1^R e^{i(k \pm \sqrt{\lambda-\lambda_l})R'} \,dR' = 0.
$$
This shows that for any fixed $(t,x) \in \partial T$, we have 
$$
\lim_{R \to \infty} \frac{1}{R-1} \int_1^R I_1(R') \,dR' = \sum_{l=1}^{l_0} \frac{1}{{\lambda-\lambda_l-k^2}} e^{ikt} \widetilde{h}(l) \partial_{\nu} \phi_l(x).
$$
We can relate the last expression to the time-harmonic solutions $v(\cdotp;h)$ of $(-\Delta_{g_0}+q_0-(\lambda-k^2)) v = 0$ in $M_0$ with $v|_{\partial M_0} = h$. We have
\begin{align*}
0&=( (- \Delta_{g_0} + q_0 - (\lambda-k^2))v(\cdotp;h),\phi_l)_{L^2(M_0)}\\
&=(\lambda_l - (\lambda-k^2))\tilde v(l;h) + \int_{\p M_0} h(y) \partial_{\nu} \phi_l(y) \,dS(y)
\end{align*}
which implies
\ba
 \widetilde{h}(l) = (\lambda-\lambda_l-k^2) \widetilde{v}(l;h).
\ea
This shows that 
\begin{align*}
\lim_{R \to \infty} \frac{1}{R-1} \int_1^R \partial_{\nu} u_1(t,x;f_{R'}) \,dR' &= \sum_{l=1}^{l_0} e^{ikt} \widetilde{v}(l;h) \partial_{\nu} \phi_l(x) \\
 &= e^{ikt} \partial_{\nu} v_1(\,\cdot\,;h).
\end{align*}
This shows \eqref{averaged_eq1}, which concludes the proof.
\end{proof}

%\nocite{*}
%\addcontentsline{toc}{chapter}{Bibliography}
%\bibliography{rough_dirac}

\begin{thebibliography}{10}

%\bibitem{An} Yu.~E.~Anikonov, \textit{Some methods for the study of multidimensional inverse problems for differential equations}, Nauka Sibirsk. Otdel, Novosibirsk (1978).
\bibitem{Arnold}
V.I.~Arnol'd, \textit{Modes and quasimodes} (in Russian), Funkcional. Anal. i Prilozen. {\bf 6} (1972), 12--20.
%\bibitem{ALP} K.~Astala, M.~Lassas, L.~P\"aiv\"arinta, \textit{Calder{\'o}n's inverse problem for anisotropic conductivity in the plane}, Comm. Partial Differential Equations, \textbf{30} (2005), 207--224.
%\bibitem{AP} K.~Astala, L.~P\"aiv\"arinta, \textit{Calder{\'o}n's inverse conductivity problem in the plane}, Ann. of Math., \textbf{163} (2006), 265--299.
\bibitem{BabichBuldyrev} V.M.~Babich, V.S.~Buldyrev, \textit{Asymptotic methods in short wave diffraction problems} (in Russian). Nauka, Moscow, 1972.
\bibitem{BabichLazutkin} V.M.~Babich, V.F.~Lazutkin, \textit{{The eigenfunctions which are concentrated near a closed geodesic}} (in Russian), 1967, Problems of Mathematical Physics, No. 2, Spectral Theory, Diffraction Problems, pp. 15Ð25, Izdat. Leningrad. Univ., Leningrad.
\bibitem{BaoZhang} G.~Bao, H.~Zhang, \textit{{Sensitivity analysis of an inverse problem for the wave equation with caustics}}, J. Amer. Math. Soc. (to appear), arXiv:1211.6220.
%\bibitem{BB} D.~C.~Barber, B.~H.~Brown, \textit{Progress in electrical impedance tomography}, in Inverse problems in partial differential equations, edited by D.~Colton, R.~Ewing, and W.~Rundell, SIAM, Philadelphia (1990), 151--164.
\bibitem{Belishev} M.~Belishev, \textit{{An approach to multidimensional inverse problems for the wave equation}} (in Russian), Dokl. Akad. Nauk SSSR \textbf{297} (1987), 524--527; translated in Soviet Math. Dokl. \textbf{36} (1988), 481--484.
\bibitem{Belishev_survey} M.~Belishev, \textit{{Recent progress in the boundary control method}}, Inverse Problems \textbf{23} (2007), R1--R67.
\bibitem{Belishev_rings} M.~Belishev, \textit{{Geometrization of rings as a method for solving inverse problems}}, Sobolev spaces in mathematics. III, 5--24, Int. Math. Ser. (N. Y.), 10, Springer, New York, 2009.
%\bibitem{BrU} R.~M.~Brown, G.~Uhlmann, \textit{Uniqueness in the inverse conductivity problem for nonsmooth conductivities in two dimensions}, Comm. Partial Differential Equations, \textbf{22} (1997), 1009--1027.
%\bibitem{Bu} A.~L.~Bukhgeim, \textit{Recovering a potential from Cauchy data in the two-dimensional case}, J. Inverse Ill-posed Probl., \textbf{16} (2008), 19--34.
\bibitem{Bu} N.~Burq, \textit{Mesures semi-classiques et mesures de d\'efaut}, S\'eminaire Bourbaki, 49\`eme ann\'ee, 1996-97, no. 826.
\bibitem{C} A.P.~Calder{\'o}n, \emph{On an inverse boundary value problem}, Seminar on Numerical Analysis and its Applications to Continuum Physics, Soc. Brasileira de Matem{\'a}tica, R{\'i}o de Janeiro, 1980.
\bibitem{CaroSalo} P.~Caro, M.~Salo, \emph{{Stability of the Calder{\'o}n problem in admissible geometries}}, preprint (2014), arXiv:1404.6652.
\bibitem{CdV} Y.~Colin de Verdi\`ere, \textit{{Quasi-modes sur les vari\'et\'es Riemanniennes}}, Invent. Math. {\bf 43} (1977), 15--52.
%\bibitem{DS} M.~Dimassi, J.~Sj\"ostrand, \textit{Spectral asymptotics in the semi-classical limit}, Cambridge University Press, 1999.
%\bibitem{DKLU_arxiv} D.~Dos Santos~Ferreira, Y. Kurylev, M. Lassas, M.~Salo, \emph{{The Calder{\'o}n problem in transversally anisotropic geometries}}, preprint (2013), arXiv:1305.1273.
%\bibitem{DSFKSU} D.~Dos Santos Ferreira, C.~E.~Kenig, J.~Sj\"ostrand, G.~Uhlmann,  \textit{Determining a magnetic Schr\"odinger operator from partial Cauchy data}, Comm. Math. Phys., \textbf{271} (2007), 467--488.
\bibitem{DKS} D.~Dos Santos~Ferreira, C.E. Kenig, M. Salo, \emph{{Determining an unbounded potential from Cauchy data in admissible geometries}}, Comm. PDE \textbf{38} (2013), 50--68.
\bibitem{DKSaU} D.~Dos Santos~Ferreira, C.E. Kenig, M. Salo, G. Uhlmann, \emph{{Limiting Carleman weights and anisotropic inverse problems}}, Invent. Math. \textbf{178} (2009), 119--171.
%\bibitem{Dui} J.J.~Duistermaat, Fourier integral operators. Birkh\"auser, Boston, 1996.
%\bibitem{eisenhart} L.~Eisenhart, \emph{{Riemannian geometry}}, 2nd printing, Princeton University Press, 1949.
\bibitem{Eskin_BCanalytic} G.~Eskin, \textit{{Inverse hyperbolic problems with time-dependent coefficients}}, Comm. PDE \textbf{32} (2007), 1737--1758.
\bibitem{F}
L.D.~Faddeev, \emph{{Increasing solutions of the Schr\"odinger equation}}, Sov. Phys. Dokl. \textbf{10} (1966), 1033--1035.
\bibitem{Gelfand} I.M.~Gel'fand, \textit{{Some aspects of functional analysis and algebra}}, Proc. Intern. Cong. Math. \textbf{1} (1954) (Amsterdam 1957), 253--277.
\bibitem{GS} C.~Guillarmou, A.~S{\'a} Barreto, \textit{Inverse problems for Einstein manifolds}, Inverse Probl. Imaging \textbf{3} (2009), 1--15.
\bibitem{GT} C.~Guillarmou, L.~Tzou, \textit{{Calder{\'o}n inverse problem with partial data on Riemann surfaces}}, Duke Math. J. \textbf{158} (2011), 83--120.
\bibitem{HN} G.M.~Henkin, R.G.~Novikov, \textit{{The $\overline{\partial}$-equation in the multidimensional inverse scattering problem}}, Russian Math. Surveys \textbf{42} (1987), 109--180.
\bibitem{Hormander_gaussian} L.~H\"ormander, \textit{{On the existence and regularity of solutions of linear pseudo-differential equations}}, Enseign. Math. \textbf{17} (1971), 99--163.
\bibitem{H1} L.~H\"ormander, \textit{The analysis of linear partial differential operators I--II}, Springer-Verlag, 1983.
\bibitem{H3} L.~H\"ormander, \textit{The analysis of linear partial differential operators III--IV}, Springer-Verlag, 1985.
%\bibitem{I} H.~Isozaki, \textit{Inverse spectral problems on hyperbolic manifolds and their applications to inverse boundary value problems in {E}uclidean space}, Amer. J. Math., \textbf{126} (2004), 1261--1313.
%\bibitem{IKL} H.~Isozaki, Y.~Kurylev, and M.~Lassas, \emph{{Forward and inverse scattering on manifolds with asymptotically cylindrical ends}}, preprint (2009), arXiv:0905.1571.
\bibitem{JNT_survey} D.~Jakobson, N.~Nadirashvili, J.~Toth, \textit{Geometric properties of eigenfunctions}, Russian Math. Surveys \textbf{56} (2001), 1085--1105.
%\bibitem{Jost} J.~Jost, \textit{Riemannian geometry and geometric analysis}, Springer-Verlag, 2002.
\bibitem{KKL}
A.~Katchalov, Y.~Kurylev, M.~Lassas, \textit{Inverse Boundary Spectral Problems}, Monographs and Surveys in Pure and Applied Mathematics 123, CRC, 2001.
\bibitem{KKLM}
A.~Katchalov, Y.~Kurylev, M.~Lassas, N.~Mandache, \textit{Equivalence of time-domain inverse problems and boundary spectral problem}, Inverse problems \textbf{20} (2004), 419--436.
\bibitem{KSa}
C.E.~Kenig, M.~Salo, \textit{{The Calder{\'o}n problem with partial data on manifolds and applications}}, Analysis \& PDE (to appear), arXiv:1211.1054.
\bibitem{KSaU}
C.E.~Kenig, M.~Salo, G.~Uhlmann, \emph{{Inverse problems for the anisotropic Maxwell equations}}, Duke Math. J. \textbf{157} (2011), 369--419.
\bibitem{KSaU_reconstruction} C.E.~Kenig, M.~Salo, G.~Uhlmann, \textit{{Reconstructions from boundary measurements on admissible manifolds}}, Inverse Probl. Imaging \textbf{5} (2011), 859--877.
\bibitem{KSU} C.E.~Kenig, J.~Sj\"ostrand, G.~Uhlmann,  \textit{The Calder\'on problem with partial data}, Ann. of Math. \textbf{165} (2007), 567--591.
%\bibitem{KnSa} K.~Knudsen, M.~Salo, \textit{Determining non-smooth first order terms from partial boundary measurements}, Inverse Probl. Imaging, \textbf{1} (2007), 349--369.
%\bibitem{KV} R.~Kohn, M.~Vogelius, \textit{Identification of an unknown conductivity by means of measurements at the boundary}, in Inverse Problems, edited by D.~McLaughlin, SIAM-AMS Proc. No. 14, Amer. Math. Soc., Providence (1984), 113--123.
%\bibitem{KLU_rigidity} Y.~Kurylev, M.~Lassas, G.~Uhlmann, \textit{Rigidity of broken geodesic flow and inverse problems}, Amer. J. Math. \textbf{132} (2010), 529--562.
\bibitem{LTU} M.~Lassas, M.~Taylor, G.~Uhlmann, \textit{The Dirichlet-to-Neumann map for complete Riemannian manifolds with boundary}, Comm. Anal. Geom., \textbf{11} (2003), 207--221.
\bibitem{LaU} M.~Lassas, G.~Uhlmann, \textit{On determining a Riemannian manifold from the Dirichlet-to-Neumann map}, Ann. Sci. {\'E}cole Norm. Sup., \textbf{34} 
   (2001), 771--787.
%\bibitem{LR} G.~Lebeau, L.~Robbiano, \textit{Contr\^ole exact de l'\' equation de la chaleur}, Comm. Partial Differential Equations, \textbf{20} (1995), 335--356. 
%\bibitem{LR2} G.~Lebeau, L.~Robbiano, \textit{Stabilisation de l'\' equation des ondes par le bord}, Duke Math. J., \textbf{86} (1997), 465--491.  
\bibitem{LeU} J.~Lee, G.~Uhlmann, \textit{Determining anisotropic real-analytic conductivities by boundary measurements}, Comm. Pure Appl. Math., \textbf{42} (1989), 1097--1112.
\bibitem{Ler} N.~Lerner, \textit{Metrics on the phase space and non-selfadjoint pseudo-differential operators}, Pseudo-differential operators, Theory and Applications, Birkh\"auser, 2010.  
\bibitem{LiS} T.~Liimatainen, M.~Salo, \textit{{Nowhere conformally homogeneous manifolds and limiting Carleman weights}}, Inverse Probl. Imaging \textbf{6} (2012), 523--530.
\bibitem{Li} W.~Lionheart, \textit{Conformal uniqueness results in anisotropic electrical impedance imaging}, Inverse Problems, \textbf{13} (1997), 125--134.
\bibitem{Mart} A.~Martinez, \textit{An introduction to semiclassical and microlocal analysis}, Universitext, Springer, 2002.
%\bibitem{MF} V.P.~Maslov, M.V.~Fedoriuk, Semi-classical approximation in quantum mechanics. D.~Reidel Publishing Company, Dordrecht, Holland, 1981.
\bibitem{MSU} F.~Monard, P.~Stefanov, G.~Uhlmann, \textit{{The geodesic ray transform on Riemannian surfaces with conjugate points}}, preprint (2014), arXiv:1402.5559.
%\bibitem{M} R.~G.~Mukhometov, \textit{The reconstruction problem of a two-dimensional Riemannian metric, and integral geometry} (Russian), Dokl. Akad. Nauk SSSR, \textbf{232} (1977), 32–-35.
\bibitem{Nachman} A.~Nachman, \emph{Reconstructions from boundary measurements}, Ann. Math. \textbf{128} (1988), 531--576.
%\bibitem{N} A.~Nachman, \textit{Global uniqueness for a two-dimensional inverse boundary value problem}, Ann. of Math., \textbf{143} (1996), 71--96.
\bibitem{NSU} A.~Nachman, J.~Sylvester, G.~Uhlmann, \textit{{An $n$-dimensional Borg-Levinson theorem}}, Comm. Math. Phys. \textbf{115} (1988), 595--605.
\bibitem{No_1988} R.G.~Novikov, \textit{{Multidimensional inverse spectral problem for the equation $-\Delta \psi + (v(x) - E u(x))\psi = 0$}}, Funct. Anal. Appl. \textbf{22} (1988), 263--272.
\bibitem{Novikov_survey} R.G.~Novikov, \textit{{The $\bar{\partial}$--approach to monochromatic inverse scattering in three dimensions}}, J. Geom. Anal. \textbf{18} (2008), 612--631.
%\bibitem{PW} L.~E.~Payne, H.~F.~Weinberger, \textit{An optimal Poincar{\'e} inequality for convex domains}, Arch. Rat. Mech. Anal., \textbf{5} (1960), 286--292.
%\bibitem{Pe} P.~Petersen, \textit{Riemannian geometry}, Springer-Verlag, 1998.
\bibitem{Ralston} J.~Ralston, \textit{{Gaussian beams and the propagation of singularities}}, Studies in partial differential
equations, 206--248, MAA Stud. Math. \textbf{23}, Math. Assoc. America, Washington, DC, 1982.
\bibitem{RammSjostrand} A.G.~Ramm, J.~Sj\"ostrand, \textit{{An inverse problem of the wave equation}}, Math. Z. \textbf{206} (1991), 119--130.
\bibitem{ReedSimon} M.~Reed, B.~Simon, \textit{{Methods in modern mathematical physics IV}}, Academic Press, 1978.
\bibitem{Salazar} R.~Salazar, \textit{{Determination of time-dependent coefficients for a hyperbolic inverse problem}}, preprint (2010), arXiv:1009.4003.
%\bibitem{Sa} M.~Salo, \textit{Inverse boundary value problems for the magnetic Schr{\"o}dinger equation}, J. Phys. Conf. Series, \textbf{73} (2007), 012020.
%\bibitem{SaTz} M.~Salo, L.~Tzou, \textit{Carleman estimates and inverse problems for Dirac operators}, Math. Ann. (to appear).
\bibitem{SaU} M.~Salo, G.~Uhlmann, \textit{{The attenuated ray transform on simple surfaces}}, J. Diff. Geom. \textbf{88} (2011), 161--187.
\bibitem{Sh} V.A.~Sharafutdinov, {\it Integral geometry of tensor fields,} Inverse and Ill-Posed Problems Series. VSP, Utrecht, 1994.
%\bibitem{Sh2} V.~Sharafutdinov, \textit{On emission tomography of inhomogeneous media}, SIAM J. Appl. Math., \textbf{55} (1995), 707--718.
\bibitem{Stefanov} P.~Stefanov, \textit{{Uniqueness of the multidimensional inverse scattering problem for time dependent potentials}}, Math. Z. \textbf{201} (1989), 541--560.
\bibitem{StU_geodesic_nonsimple} P.~Stefanov, G.~Uhlmann, \textit{{Integral geometry of tensor fields on a class of non-simple Riemannian manifolds}}, Amer. J. Math. \textbf{130} (2008), 239--268.
\bibitem{StU_geodesic_fold} P.~Stefanov, G.~Uhlmann, \textit{{The geodesic X-ray transform with fold caustics}}, Analysis \& PDE \textbf{5} (2012), 219Ð260.
%\bibitem{SuU} Z.~Sun, G.~Uhlmann, \textit{Generic uniqueness for an inverse boundary value problem}, Duke Math. J., \textbf{62} (1991), 131--155.
%\bibitem{SuU2} Z.~Sun, G.~Uhlmann, \textit{Anisotropic inverse problems in two dimensions}, Inverse Problems, \textbf{19} (2003), 1001--1010.
%\bibitem{Sy} J.~Sylvester, \textit{An anisotropic inverse boundary value problem}, Comm. Pure Appl. Math., \textbf{43} (1990), 201--232.
\bibitem{SU} J.~Sylvester, G.~Uhlmann, \textit{A global uniqueness theorem for an inverse boundary value problem}, Ann. of Math. \textbf{125} (1987), 153--169.
\bibitem{SU_boundary} J.~Sylvester, G.~Uhlmann, \textit{Inverse boundary value problems at the boundary -- continuous dependence}, Comm. Pure Appl. Math., \textbf{41} (1988), 197--219.
\bibitem{Tataru} D.~Tataru, \textit{{Unique continuation for solutions to PDE's; between H\"ormander's theorem and Holmgren's theorem}}, Comm. PDE \textbf{20} (1995), 855--884.
%\bibitem{T} M.~Taylor, \textit{Pseudodifferential operators}, Princeton University Press, 1981.
\bibitem{U_IP} G.~Uhlmann, \textit{Electrical impedance tomography and Calder{\'o}n's problem}, Inverse Problems \textbf{25} (2009), 123011.
\bibitem{UhlmannVasy} G.~Uhlmann, A.~Vasy, \textit{The inverse problem for the local geodesic ray transform}, preprint (2012), arXiv:1210.2084.
\bibitem{Z_survey1} S.~Zelditch, \textit{Local and global analysis of eigenfunctions on Riemannian manifolds}, Handbook of geometric analysis. No. 1, 545Ð658, Adv. Lect. Math. (ALM), 7, Int. Press, Somerville, MA, 2008.
\bibitem{Z_survey2} S.~Zelditch, \textit{Recent developments in mathematical quantum chaos}, Current developments in mathematics, 2009, 115Ð204, Int. Press, Somerville, MA, 2010.
\bibitem{Zworski} M.~Zworski, \textit{Semiclassical analysis}, Graduate Studies in Mathematics \textbf{138}, AMS, 2012.

%\bibitem{nachman_reconstruction}
%A.~Nachman, \emph{Reconstructions from boundary measurements}, Ann. Math. \textbf{128} (1988), 531--576.

%\bibitem{uhlmannicm}
%G.~Uhlmann, \emph{Inverse boundary value problems for partial differential equations}, {Proceedings of the International Congress of Mathematicians} (Berlin), Doc. Math., vol. III, 1998, pp.~77--86.

%\bibitem{uhlmannselecta}
%\bysame, \emph{{Commentary on Alberto P. Calder{\'o}n's paper: On an Inverse Boundary Value Problem}}, Selecta (to appear) (A.~Bellow, C.~E. Kenig, and P.~Malliavin, eds.).

\end{thebibliography}
%\bibliographystyle{hamsplain}

\providecommand{\bysame}{\leavevmode\hbox to3em{\hrulefill}\thinspace}
\providecommand{\href}[2]{#2}

\end{document}